\title{On the Rate of Convergence to a Gamma Distribution on Wiener Space }
\author{
	E. Azmoodeh\thanks{Ruhr University Bochum, Faculty of Mathematics, IB 2/101, 44780 Bochum, Germany. E-mail: ehsan.azmoodeh@rub.de},
	P. Eichelsbacher\thanks{Ruhr University Bochum, Faculty of Mathematics, IB 2/115, 44780 Bochum, Germany. E-mail: peter.eichelsbacher@rub.de}
	and L. Knichel\thanks{Ruhr University Bochum, Faculty of Mathematics, IB 2/95, 44780 Bochum, Germany. E-mail: lukas.knichel@rub.de.
	Lukas Knichel has been supported by the German Research Foundation (DFG) via Research Training Group RTG 2131 \textit{High dimensional phenomena in probability -- fluctuations and discontinuity}}
}
\date{\today}
\theoremstyle{plain}
\newtheorem{Thm}{Theorem}[section]
\newtheorem{Lem}[Thm]{Lemma}
\newtheorem{lem}[Thm]{Lemma}
\newtheorem{Prop}[Thm]{Proposition}
\newtheorem{prop}[Thm]{Proposition}
\newtheorem{cor}[Thm]{Corollary}
\theoremstyle{definition}
\newtheorem{Rem}[Thm]{Remark}
\newtheorem{rem}[Thm]{Remark}
\newtheorem{assum}[Thm]{Assumption}
\newtheorem{ex}[Thm]{Example}
\newtheorem{con}[Thm]{Conjecture}
\newcommand{\HH}{\mathfrak{H}}
\def\E{\mathbb{E}}
\def\R{\mathbb{R}}
\def\C{\mathbb{C}}
\def\N{\mathbb{N}}
\newcommand{\cont}[1]{\mathbin{\otimes_{#1}}}
\newcommand{\contIterated}[2]{\mathbin{\otimes_{#1}^{(#2)}}}
\newcommand{\scont}[1]{\mathbin{\widetilde{\otimes}_{#1}}}
\newcommand{\tensor}{\mathbin{\otimes}}
\newcommand{\symtensor}{\mathbin{\widetilde{\otimes}}}
\DeclareMathOperator{\Var}{Var}
\DeclareMathOperator{\Tr}{Tr}
\DeclareMathOperator{\const}{const.}
\DeclareMathOperator{\CenteredGamma}{\overline{\Gamma}}
\DeclareMathOperator{\confhyper}{{}_{1}F _{1}}
\DeclarePairedDelimiter\sprod{\langle}{\rangle}
\DeclarePairedDelimiter\abs{\lvert}{\rvert}
\DeclarePairedDelimiter\norm{\lVert}{\rVert}
\newcommand{\ind}[1]{\mathds{1}_{\{ #1 \}}}
\let\temp\epsilon \let\epsilon\varepsilon \let\varepsilon\temp
\let\temp\phi \let\phi\varphi \let\varphi\temp
\begin{document}
	\maketitle
	\begin{abstract}
	In \cite{n-p-noncentral}, Nourdin and Peccati established a neat characterization of Gamma approximation on a fixed Wiener chaos in terms of convergence of only the third and fourth cumulants.  In this paper, we investigate the rate of convergence in Gamma approximation on Wiener chaos in terms of the \textit{iterated Gamma operators} of Malliavin Calculus. On the second Wiener chaos, our upper bound can be further extended to an exact rate of convergence in a suitable probability metric $d_2$
	in terms of the maximum of the third and fourth cumulants, analogous to that of normal approximation in \cite{n-p-optimal} under one extra mild condition. We end the paper with some novel Gamma characterization within the second Wiener chaos as well as Gamma approximation in Kolmogorov distance relying on the classical Berry--Esseen type inequality.  
	\end{abstract}

  \vskip0.3cm
\noindent \textbf{Keywords}:
Gamma approximation, Wiener chaos, Cumulants/Moments, Weak convergence, Malliavin Calculus, Berry--Esseen bounds, Stein's method, Wasserstein distances
\noindent \textbf{MSC 2010}: 60F05, 60G50, 60H07

\section{Introduction}
Given a separable Hilbert space $\HH$, we consider an isonormal Gaussian process $X=\{X(h) : h \in \HH \}$ defined on some probability space $(\Omega, \mathscr{F} , P)$. Our object of interest is a sequence $(F_n)_{n\geq 1}$ living inside a fixed Wiener chaos of order $q$ with fixed variance, e.g. $\E[F_n^2]=1$. In recent years, these objects have been studied extensively, with one of the most famous results being the so-called fourth moment theorem, which first appeared in \cite{FmtOriginalReference}. It states that $F_n \stackrel{\mathcal{D}}{\to} N$, where $N \sim \mathscr{N}(0,1)$ is a standard normal random variable, if and only if $\E[F_n^4] \to 3$. In 2009, the authors of \cite{StMethOnWienChaos} proved a quantitative version of the fourth moment theorem combining Stein's method for normal approximation with Malliavin calculus on the Wiener space. In this paper, they provide explicit bounds for the total variation distance between $F_n$ and $N$ in terms of the fourth cumulant of $F_n$, namely 
\[ d_{TV}(F_n,N) \leq 2 \sqrt{\frac{q-1}{3q} \kappa_4(F_n)}.\]
Recall that for two random variables $X$ and $Y$, the total variation distance is $d_{TV} (X,Y) := \sup_{A \in \mathcal{B}(\R) } \abs{ P(X \in A) - P(Y \in A) }$,
where $\mathcal{B}(\R)$ is the set of all bounded Borel sets.
In \cite{n-p-optimal}, the optimal rate of convergence in the fourth moment theorem has been found. More precisely, if $F_n$ converges in law to $N$, then there exist constants $0<c<C$ (not depending on $n$), such that
\[ c \times \max\{ \abs{\kappa_3(F_n)}, \abs{\kappa_4(F_n)}  \} \leq d_{TV}(F_n,N) \leq  C \times \max\{ \abs{\kappa_3(F_n)}, \abs{\kappa_4(F_n)}  \}.\]
Note that the square root from the previous results has been removed and that the third cumulant comes into play.

Limit theorems for a Gamma target distribution have been considered e.g. in \cite{n-p-noncentral}, \cite{StMethOnWienChaos} and \cite{InvPrinForHomSums}. We consider the target $G(\nu) \sim CenteredGamma(\nu)$. This means that $G(\nu) = 2 \, \widehat{G}(\nu /2) - \nu$, where $\widehat{G}(\nu /2)$ is a standard Gamma random variable with density $\widehat{g}(x) = x^{\frac{\nu}{2} -1} \, e^{-x} \, \Gamma(\frac{\nu}{2})^{-1} \, \mathds{1}_{(0, \infty)}(x)$. From now on, we still assume $F$ to be inside a fixed Wiener Chaos of order $q \geq 2$, but fix our variance to be $\Var(F)=\Var(G(\nu))=2\nu$.

In \cite{d-p}, the authors used a Stein equation suitable for proving Stein-Malliavin upper bounds in $1$-Wasserstein distance for the convergence of $F_n$ to $G(\nu)$. In Theorem 1.7, they showed that
\[ d_1(F,G(\nu)) \leq \max \Big(1, \frac{2}{\nu} \Big) \E\Big[ \big( 2F +2\nu -  \Gamma_1(F) \big)^2 \Big]^{1/2},  \]
where $d_1$ denotes the $1$-Wasserstein distance and $\Gamma_1$ is the Gamma operator defined in the next section.

From \cite[Theorem~3.6]{InvPrinForHomSums}, for any random variable $F$ in the $q$-th Wiener chaos with $\E[F^2]=2\nu$, we  have the estimate
\begin{align}
\E\Big[ \big(2F + 2\nu  - \Gamma_1(F) \big)^2 \Big] & \leq \frac{q-1}{3q} \abs{\kappa_4(F) - \kappa_4(G(\nu)) - 12 \kappa_3(F) +12 \kappa_3(G(\nu)) } \notag  \\
& \leq \const \times \max \big\{ \abs{ \kappa_3(F) - \kappa_3(G(\nu)) } , \abs{ \kappa_4(F) - \kappa_4(G(\nu)) } \big\}.
\label{eq:Var(Gamma_1-2F)leqCumulants}
\end{align}
Combining these two results, we obtain an upper bound similar to the one in the fourth moment theorem, but worse by a whole square root, namely
\begin{equation}
d_1(F,G(\nu)) \leq \const \times \max \big\{ \abs{ \kappa_3(F) - \kappa_3(G(\nu)) } , \abs{ \kappa_4(F) - \kappa_4(G(\nu)) } \big\}^{1/2}. \label{eq:non-optimal-bound}
\end{equation}
A natural question, which we will deal with in this paper, is if this square root can be removed using techniques similar to the ones in \cite{n-p-optimal}.

As a generalization of the Wasserstein-$1$ distance $d_1$, we also define the following probability metrics. For $k \geq 1 $, let 
\[ \mathcal{H}_k := \{ h \in C^{k-1}(\R) : h^{(k-1)} \in \operatorname{Lip}(\R) \text{ and } \norm{h^{(1)}}_\infty \leq 1,  \ldots, \norm{h^{(k)}}_\infty \leq 1 \}.  \]
Furthermore define the corresponding distance between two random variables $X$ and $Y$ as
\begin{equation}
d_k(X,Y) := \sup_{h \in \mathcal{H}_k} \abs[\Big]{ \E[h(X)] - \E[h(Y)]  }. \label{eq:d_kMetricsDefinition}
\end{equation}

The outline of our paper is as follows: In section $2$, we give a brief introduction to Mallia\-vin calculus on the Wiener space and specify the notation used in the paper. The third section contains the main theoretical finding of this paper -- an upper bound for the $d_2$ distance
between a general element $F$ living in a finite sum of Wiener chaoses and our target distribution $G(\nu)$ in terms of iterated Gamma operators.
In Section \ref{sec:2wiener}, we shift our focus to the case of a random variable $F=I_2(f)$ in the second Wiener chaos to establish an optimality result similar to the main result in \cite{n-p-optimal} by removing the square root in \eqref{eq:non-optimal-bound}. Section 5 provides several new characterizations of the centered Gamma distribution $G(\nu)$ within the second Wiener chaos in terms of iterated Gamma operators.  The final section deals with a different collection of techniques, mainly based on a classical Berry-Esseen lemma, to present several Gamma approximation results in the Kolmogorov distance.

\section{Preliminaries: Gaussian Analysis and Malliavin Calculus}
In this section, we provide a very brief introduction to Malliavin calculus and define some of the operators used in this framework. For a more detailed introduction and proofs, see for example the textbooks \cite{n-pe-1} and \cite{GelbesBuch}.

\subsection{Isonormal Gaussian Processes and Wiener Chaos}
Let $\mathfrak{H}$ be a real separable Hilbert space with inner product $\sprod{\cdot,\cdot}_{\mathfrak{H}}$, and $X = \{X(h) : h \in \mathfrak{H} \}$ be an isonormal Gaussian process, defined on some probability space $(\Omega, \mathscr{F},P)$. This means that $X$ is a family of centered, jointly Gaussian random variables with covariance structure $\E[X(g)X(h)] = \sprod{g,h}_{\mathfrak{H}}$. We assume that $\mathscr{F}$ is the $\sigma$-algebra generated by $X$. For an integer $q \geq 1$, we will write $\mathfrak{H}^{\otimes q}$ or $\mathfrak{H}^{\odot q}$ to denote the $q$-th tensor product of $\mathfrak{H}$, or its symmetric $q$-th tensor product, respectively. If $H_q(x) =(-1)^{q}e^{x^{2}/2}{\frac {d^q}{d x^n}}e^{-x^{2}/2}$ is the $q$-th Hermite polynomial, then the closed linear subspace of $L^2(\Omega)$ generated by the family $\{H_q(X(h)) : h \in \mathfrak{H}, \norm{h}_\mathfrak{H} = 1 \}$ is called the $q$-th \textit{Wiener chaos} of $X$ and will be denoted by $\mathscr{H}_q$. For $f \in \mathfrak{H}^{\odot q}$, let $I_q(f)$ be the $q$-th multiple Wiener-Itô integral of $f$ (see \cite[Definition~2.7.1]{n-pe-1}). An important observation is that for any $f \in \mathfrak{H}$ with $\norm{f}_{\mathfrak{H}}=1$ we have that $H_q(X(f)) = I_q(f^{\otimes q})$. As a consequence $I_q$ provides an isometry from $\mathfrak{H}^{\odot q}$ onto the $q$-th Wiener chaos $\mathscr{H}_q$ of $X$. It is a well-known fact, called the \textit{Wiener-Itô chaotic decomposition}, that any element $F \in L^2(\Omega)$ admits the expansion
\begin{equation}
F = \sum_{q=0}^{\infty} I_q(f_q), \label{eq:ChaoticExpansion}
\end{equation}
where $f_0 = \E[F]$ and the $f_q \in \mathfrak{H}^{\odot q}$, $q \geq 1$ are uniquely determined. An important result is the following isometry property of multiple integrals. Let $f \in \mathfrak{H}^{\odot p}$ and $g \in \mathfrak{H}^{\odot q}$, where $1 \leq q \leq p$. Then
\begin{equation}
\E[ I_p(f) I_q(g) ] = \begin{cases}
p! \, \sprod{f,g}_{\mathfrak{H}^{\otimes p}}  & \text{if } p= q\\
0 & \text{otherwise}.
\end{cases} \label{eq:IsometryProperty}
\end{equation}

\subsection{The Malliavin Operators}

We denote by $\mathscr{S}$ the set of \textit{smooth} random variables, i.e. all random variables of the form $F= g(X(\varphi_1),\ldots, X(\varphi_n))$, where $n \geq 1$, $\varphi_1, \ldots, \varphi_n \in \HH$ and $g:\R^n \to \R$ is a $C^{\infty}$-function, whose partial derivatives have at most polynomial growth. For these random variables, we define the \textit{Malliavin derivative} of $F$ with respect to $X$ as the $\HH$-valued random element $DF \in L^2(\Omega,\HH)$ defined as
\[ DF = \sum_{i=1}^{\infty} \frac{\partial g}{\partial x_i} \big( X(\varphi_1), \ldots, X(\varphi_n) \big) \, \varphi_i. \]
The set $\mathscr{S}$ is dense in $L^2(\Omega)$ and using a closure argument, we can extend the domain of $D$ to $\mathbb{D}^{1,2}$, which is the closure of $\mathscr{S}$ in $L^2(\Omega)$ with respect to the norm $\norm{F}_{\mathbb{D}^{1,2}} := \E[F^2] + \E[ \norm{DF}_{\HH}^2 ]$. See \cite{n-pe-1} for a more general definition of higher order Malliavin derivatives and spaces $\mathbb{D}^{p,q}$. The Malliavin derivative satisfies the following chain-rule. If $\phi:\R^m \to \R$ is a continuously differentiable function with bounded partial derivatives and $F=(F_1, \ldots, F_m)$ is a vector of elements of $\mathbb{D}^{1,q}$ for some $q$, then $\phi(F)\in \mathbb{D}^{1,q}$ and
\begin{equation}
D \phi(F) = \sum_{i=1}^{m} \frac{\partial \phi}{\partial x_i} (F) \, D F_i. \label{eq:ChainRule}
\end{equation}
Note that the conditions on $\phi$ are not optimal and can be weakened. For $F \in L^2(\Omega)$, with chaotic expansion as in \eqref{eq:ChaoticExpansion}, we define the \textit{pseudo-inverse} of the infinitesimal generator of the Ornstein-Uhlenbeck semigroup as
\[ L^{-1} F = - \sum_{p=1}^{\infty} \frac{1}{p} I_p(f_p). \]

The following integration by parts formula is one of the main ingredients to proving the main theorem of section \ref{sec:MainUpperBound}. Let $F,G \in \mathbb{D}^{1,2}$. Then 
\begin{equation}
\E[FG] = \E[F] \E[G] + \E[ \sprod{DG, -DL^{-1}F}_{\HH} ].  \label{eq:IntegrationByParts}
\end{equation}

\subsection{Gamma Operators and Cumulants}

Let $F$ be a random variable with characteristic function $\phi_F(t) = \E[ e^{itF}]$. We define its $n$-th cumulant, denoted by  $\kappa_n(F)$, as
\[ \kappa_n(F) = \frac{1}{i^n} \frac{\partial^n}{\partial t^n} \log \phi_F(t) \Big\vert_{t=0}. \]
Let $F$ be a random variable with a finite chaos expansion. We define the operators $\Gamma_i$, $i \in \N_0$ via
\[ \Gamma_0(F) := F \]
and
\begin{equation}
\Gamma_{i+1} (F) := \sprod{D \Gamma_{i}(F) , -D L^{-1} F}_{\mathfrak{H}}, \quad \text{for } i\geq 0. \label{eq:GammOperatorDefinition}
\end{equation}
This is the Gamma operator used in the proof of the main theorem in \cite{n-p-optimal}, although it is defined differently there.
Note that there is also an alternative definition, which can be found in most other papers in this framework, see for example Definition 8.4.1 in \cite{n-pe-1} or Definition 3.6 in \cite{OptBerryEsseenRates}. For the sake of completeness, we also mention the classical Gamma operators, which we also call \textit{alternative} Gamma operators, which we shall denote by $\Gamma_{alt}$. These are defined via
\begin{equation}
\Gamma_{alt, 0}(F) := F \quad \text{and} \quad \Gamma_{alt, i+1} (F) := \sprod{D F , -D L^{-1} \Gamma_{alt, i}(F)}_{\mathfrak{H}}, \quad \text{for } i\geq 0. \label{eq:GammaOperatorAltDefinition}
\end{equation}
The classical Gamma operators are related to the cumulants of $F$ by the following identity from \cite{CumOnTheWienerSpace}: For all $j \geq 0$, we have
\[ \E[\Gamma_{alt, j}(F)] = \frac{1}{j!} \kappa_{j+1}(F). \]
If $j \geq 3$, this does not hold anymore for our new Gamma operators. Instead, we will list some useful relations between the classical and the new Gamma operators.

\begin{Prop} \label{Prop:RelationOldAndNewGamma}
	Let $F$ be a centered random variable admitting a finite chaos expansion. Then
\begin{itemize}
	\item[(a)] $\Gamma_1(F) = \Gamma_{alt,1}(F)$,
	\item[(b)] if $j=1$ or $j=2$, then
	\[ \E\big[ \Gamma_j(F) \big] = \E\big[ \Gamma_{alt,j}(F) \big] = \frac{1}{j!} \kappa_{j+1}(F), \]
	\item[(c)] $ \E\big[ \Gamma_3(F) \big] = 2 \, \E \big[ \Gamma_{alt,3}(F) \big] - \Var\big(\Gamma_1(F) \big) = \frac{1}{3} \kappa_4(F) - \Var\big(\Gamma_1(F) \big)$,
	\item[(d)] When $F= I_2(f)$, for some $f \in \HH^{\odot 2}$, is an element of the second Wiener chaos, then
	\[ \Gamma_j(F) = \Gamma_{alt,j}(F) \quad \text{for all } j \geq 1. \]
\end{itemize}
\end{Prop}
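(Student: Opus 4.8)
The plan is to treat the four parts in increasing order of difficulty, using only the definitions \eqref{eq:GammOperatorDefinition} and \eqref{eq:GammaOperatorAltDefinition}, the integration-by-parts formula \eqref{eq:IntegrationByParts}, and the cumulant identity $\E[\Gamma_{alt,j}(F)] = \kappa_{j+1}(F)/j!$; throughout I abbreviate $u := -DL^{-1}F$ and use that $F$ is centered, so that \eqref{eq:IntegrationByParts} reads $\E[\sprod{DG,u}_{\HH}] = \Cov(F,G)$. Part (a) is immediate, since $\Gamma_0 = \Gamma_{alt,0} = F$ forces both $\Gamma_1(F)$ and $\Gamma_{alt,1}(F)$ to equal $\sprod{DF,u}_{\HH}$. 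For part (b) the case $j=1$ follows from (a) and $\E[\sprod{DF,u}_{\HH}] = \kappa_2(F)$, while for $j=2$ I would apply \eqref{eq:IntegrationByParts} to each operator separately: reading it with $G=\Gamma_1(F)$ gives $\E[\Gamma_2(F)] = \Cov(F,\Gamma_1(F))$, and $\E[\Gamma_{alt,2}(F)] = \E[\sprod{DF,-DL^{-1}\Gamma_1(F)}_{\HH}]$ equals the same covariance once the roles of the two arguments are swapped. It is exactly this symmetry, $\E[\sprod{DG,-DL^{-1}F}_{\HH}] = \E[\sprod{DF,-DL^{-1}G}_{\HH}]$, that makes the two operators share their mean; equality with $\kappa_3(F)/2$ is then the cumulant identity.

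Part (c) is the computational heart. Using \eqref{eq:IntegrationByParts} and $\E[F]=0$ I would first reduce $\E[\Gamma_3(F)] = \Cov(F,\Gamma_2(F)) = \E[F\Gamma_2(F)]$, then feed the Leibniz rule $F\,D\Gamma_1(F) = D(F\Gamma_1(F)) - \Gamma_1(F)\,DF$ into $\Gamma_2(F) = \sprod{D\Gamma_1(F),u}_{\HH}$. One resulting term is $\Cov(F,F\Gamma_1(F)) = \E[F^2\Gamma_1(F)]$ after another use of \eqref{eq:IntegrationByParts}, and the other is $\E[\Gamma_1(F)\sprod{DF,u}_{\HH}] = \E[\Gamma_1(F)^2]$, so $\E[\Gamma_3(F)] = \E[F^2\Gamma_1(F)] - \E[\Gamma_1(F)^2]$. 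In parallel I would rewrite $\E[\Gamma_{alt,3}(F)] = \E[F\Gamma_{alt,2}(F)]$ and use $F\,DF = \tfrac12 D(F^2)$ to get $2\,\E[\Gamma_{alt,3}(F)] = \Cov(F^2,\Gamma_1(F)) = \E[F^2\Gamma_1(F)] - \E[F^2]\,\E[\Gamma_1(F)]$. Subtracting, the term $\E[F^2\Gamma_1(F)]$ cancels, and the identity $\E[F^2] = \Var(F) = \kappa_2(F) = \E[\Gamma_1(F)]$ from part (b) turns the remainder into precisely $-\Var(\Gamma_1(F))$; the final equality uses $2\,\E[\Gamma_{alt,3}(F)] = 2\kappa_4(F)/3!$.

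For part (d) I would argue structurally rather than by computation. The two recursions differ only in whether $-DL^{-1}$ is applied to $F$ or to the previous operator, and on $\mathscr{H}_0\oplus\mathscr{H}_2$ (constants plus second chaos) the operator $-DL^{-1}$ acts simply as $\tfrac12 D$. I would therefore prove by induction that $\Gamma_j(F) = \Gamma_{alt,j}(F) \in \mathscr{H}_0\oplus\mathscr{H}_2$ for all $j\geq 1$. The base case is (a) together with $\Gamma_1(F) = \tfrac12\norm{DF}_{\HH}^2$; since $DF = 2I_1(f)$, the multiplication formula $I_1(a)I_1(b) = I_2(a\symtensor b) + \sprod{a,b}_{\HH}$ places this in $\mathscr{H}_0\oplus\mathscr{H}_2$. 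In the inductive step the membership lets me replace $-DL^{-1}$ by $\tfrac12 D$ in both definitions, so that $\Gamma_{j+1}(F) = \tfrac12\sprod{D\Gamma_j(F),DF}_{\HH}$ and $\Gamma_{alt,j+1}(F) = \tfrac12\sprod{DF,D\Gamma_{alt,j}(F)}_{\HH}$ coincide by the inductive hypothesis and the symmetry of the inner product, and the common value lies again in $\mathscr{H}_0\oplus\mathscr{H}_2$ by the same multiplication formula applied to the first-chaos elements $D\Gamma_j(F)$ and $DF$.

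The routine parts are (a) and (b); the main obstacle is the bookkeeping in (c), where the Leibniz rule and the two separate integration-by-parts steps must be arranged so that the $\E[F^2\Gamma_1(F)]$ terms cancel and the variance surfaces with the correct sign. The conceptual crux of (d) is the invariance of $\mathscr{H}_0\oplus\mathscr{H}_2$ under $G\mapsto\sprod{DG,DF}_{\HH}$, which is special to the second chaos and is precisely what fails for $q\geq 3$, matching the breakdown of the cumulant identity for the new operators noted before the proposition.
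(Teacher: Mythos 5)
Your proposal is correct in all four parts. For (a)--(c) you follow essentially the same route as the paper: integration by parts \eqref{eq:IntegrationByParts} in both directions, the Leibniz rule $F\,D\Gamma_1(F) = D(F\Gamma_1(F)) - \Gamma_1(F)\,DF$, the cancellation of $\E[F^2\Gamma_1(F)]$, and the quoted cumulant identity for the classical operators; your bookkeeping (computing $2\,\E[\Gamma_{alt,3}(F)] = \Cov(F^2,\Gamma_1(F))$ directly rather than expanding $\E[F^2\Gamma_{alt,1}(F)]$ and recombining) is just a rearrangement of the paper's chain of identities. Part (d), however, is a genuinely different argument. The paper proves (d) by comparing two explicit contraction representations: Lemma \ref{lem:RepresentationOfGamma_j} for the new operators (itself proved by induction with the multiplication formula) against formula (5.25) of \cite{CumOnTheWienerSpace} for the classical ones, and then checking that the recursive constants agree because for $q=2$ the indicator $\ind{r_1+\cdots+r_{s-1}<\frac{sq}{2}}$ forces $r_1=\cdots=r_{s-1}=1$, so the factor $sq-2r_1-\cdots-2r_{s-1}$ collapses to $q$. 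You instead run a short structural induction on the claim $\Gamma_j(F)=\Gamma_{alt,j}(F)\in\mathscr{H}_0\oplus\mathscr{H}_2$, using that $-DL^{-1}$ acts as $\tfrac12 D$ on $\mathscr{H}_0\oplus\mathscr{H}_2$ (so both recursions become $G\mapsto\tfrac12\sprod{DG,DF}_{\HH}$) and that this space is closed under that operation by the first-order multiplication formula. Your route is shorter and self-contained -- it avoids both the heavy representation lemma and the external reference -- and it isolates the structural reason the second chaos is special, namely that $-DL^{-1}$ is a fixed multiple of $D$ there, which visibly fails on the mixed-chaos elements $\Gamma_1(I_q(f))$ for $q\geq 3$. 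What the paper's computation buys in exchange is the explicit kernel formula \eqref{eq:NewGammaOpProof1} with its recursive constants \eqref{eq:RecursiveFormulaForc_q}, which the authors need elsewhere and which quantifies exactly how the two families of operators differ for general $q$; your argument establishes equality on the second chaos without producing that representation. Both proofs are complete; only minor polishing is needed in yours (e.g., writing $D_aF = 2I_1(f(\cdot,a))$ rather than $DF=2I_1(f)$, and stating explicitly that the constant component of $\Gamma_j(F)$ differentiates to zero in the inductive step).
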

The proofs of these statements can be found in the appendix along with an explicit representation of the Gamma operators in terms of contractions.

\section{The Stein-Malliavin Upper Bound} \label{sec:MainUpperBound}
In the following, we will use centered versions of the Gamma-operators, i.e.
\[  \CenteredGamma_j(F)  := \Gamma_j(F) - \E[\Gamma_j(F)] =  \Gamma_j(F) - \frac{1}{j!} \kappa_{j+1}(F) .\]

\begin{Thm}\label{thm:MainMalliavinSteinBound}
	Let $F$ be a centered random variable admitting a finite chaos expansion with $\Var(F) = 2 \nu$. Let $G(\nu) \sim CenteredGamma(\nu)$. Then there exists a constant $C>0$ (only depending on $\nu$), such that
	\begin{align}\label{main-theoretical-estimate}
	d_2(F,G(\nu)) \leq C \, \bigg\{ & \E \left[ \big( 2F - \CenteredGamma_1(F) \big)^2 \right] \notag  + \E \left[ \big( \CenteredGamma_2(F) - 2 \CenteredGamma_1(F) \big)^2 \right]^{1/2} \E \left[ \big( 2F - \CenteredGamma_1(F) \big)^2 \right]^{1/2} \notag \\
	& + \E \left[ \big( \CenteredGamma_3(F) - 2 \CenteredGamma_2(F) \big)^2 \right]^{1/2} \notag \\
	& + \abs*{\kappa_3(F) - \kappa_3(G(\nu)) } +  \abs*{\kappa_4(F) - \kappa_4(G(\nu)) } \bigg\}.
	\end{align}
\end{Thm}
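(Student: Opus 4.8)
The plan is to run Stein's method for the target $G(\nu)$ together with the Malliavin integration-by-parts formula \eqref{eq:IntegrationByParts}, following the strategy of \cite{n-p-optimal} but carried one order deeper than the naive Wasserstein estimate \eqref{eq:non-optimal-bound}. First I would fix the Stein operator of the centered Gamma law. Writing $G(\nu) = 2\widehat{G}(\nu/2) - \nu$ and transporting the classical Gamma Stein identity, one checks that $\E[\mathcal{A}f(G(\nu))] = 0$ for $\mathcal{A}f(x) := 2(x+\nu) f'(x) - x f(x)$ and all sufficiently regular $f$. For a test function $h \in \mathcal{H}_2$ I would solve the Stein equation $\mathcal{A}f_h = h - \E[h(G(\nu))]$ and record uniform bounds $\norm{f_h^{(k)}}_\infty \leq c_k(\nu)$ for $k=1,2,3$. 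This regularity step is the first delicate point: the diffusion coefficient $2(x+\nu)$ degenerates at the left endpoint $x = -\nu$ of the support, so boundedness of the solution and of its first three derivatives must be obtained from a careful analysis of the explicit integral representation of $f_h$ rather than by a black-box argument.

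Next I would perform the first integration by parts. Since $F$ is centered, \eqref{eq:IntegrationByParts} and the chain rule \eqref{eq:ChainRule} give $\E[F f_h(F)] = \E[f_h'(F)\,\Gamma_1(F)]$, and because $\E[\Gamma_1(F)] = \kappa_2(F) = 2\nu$ we may write $\Gamma_1(F) = \CenteredGamma_1(F) + 2\nu$. Substituting into $\E[\mathcal{A}f_h(F)]$ collapses the deterministic parts and yields the exact identity
\[ \E[h(F)] - \E[h(G(\nu))] = \E\big[ f_h'(F)\,(2F - \CenteredGamma_1(F)) \big]. \]
Bounding the right-hand side by $\norm{f_h'}_\infty \, \E[(2F-\CenteredGamma_1(F))^2]^{1/2}$ would only reproduce \eqref{eq:non-optimal-bound}; the whole point is to extract the extra order.

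The core of the proof is therefore a further twofold integration by parts, always pairing against $-DL^{-1}F$ so that the \emph{new} operators $\Gamma_{j+1}(F) = \sprod{D\Gamma_j(F), -DL^{-1}F}_\HH$ are produced. Each such step converts a term $\E[\phi(F)\Gamma_j(F)]$ into contributions of type $\E[\phi(F)\Gamma_{j+1}(F)]$ and $\E[F\phi(F)\Gamma_j(F)]$ together with genuinely quadratic products $\E[\phi'(F)\Gamma_1(F)\Gamma_j(F)]$. Two structural facts organize the outcome. First, the factor $2$ in $2(x+\nu)$ forces the target to satisfy $\E[\Gamma_{j+1}(G(\nu))] = 2\,\E[\Gamma_{j}(G(\nu))]$ at the relevant levels (which one verifies using Proposition \ref{Prop:RelationOldAndNewGamma} and the values $\kappa_3(G(\nu)) = 8\nu$, $\kappa_4(G(\nu)) = 48\nu$), so that the natural increment at each level is the centered difference $\CenteredGamma_{j+1}(F) - 2\CenteredGamma_j(F)$. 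Second, the scalar remainders produced when the deterministic parts of $\Gamma_j(F)$ are separated are exactly the expectations $\E[\Gamma_j(F)]$, which Proposition \ref{Prop:RelationOldAndNewGamma} identifies with the cumulants of $F$; comparing them with the corresponding cumulants of $G(\nu)$ yields the differences $\abs{\kappa_3(F)-\kappa_3(G(\nu))}$ and $\abs{\kappa_4(F)-\kappa_4(G(\nu))}$.

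Finally I would assemble the estimate. The quadratic products $\Gamma_1(F)\Gamma_j(F)$ are split through $\Gamma_1(F) = 2\nu + \CenteredGamma_1(F)$ and $\CenteredGamma_1(F) = 2F - (2F-\CenteredGamma_1(F))$: the constant $2\nu$ feeds the telescoping one level further, while the fluctuation $2F - \CenteredGamma_1(F)$ pairs against the level-two increment, and a Cauchy--Schwarz inequality then yields both the purely quadratic term $\E[(2F-\CenteredGamma_1(F))^2]$ and the mixed term $\E[(\CenteredGamma_2(F)-2\CenteredGamma_1(F))^2]^{1/2}\,\E[(2F-\CenteredGamma_1(F))^2]^{1/2}$. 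The undivided increment $\E[(\CenteredGamma_3(F)-2\CenteredGamma_2(F))^2]^{1/2}$ comes from the last integration by parts, estimated directly by $\norm{f_h'''}_\infty$ times its $L^2$ norm, at which point the iteration is truncated. Bounding every prefactor by the Stein constants $c_k(\nu)$ and taking the supremum over $h \in \mathcal{H}_2$ gives \eqref{main-theoretical-estimate}. The main obstacle I expect is twofold: establishing the derivative bounds on $f_h$ uniformly despite the degeneracy at $x=-\nu$, and bookkeeping the iterated integration by parts so that precisely the operators $\CenteredGamma_{j+1}(F)-2\CenteredGamma_j(F)$ and the cumulant differences survive while the square root on the leading term is genuinely removed, that is, verifying that the first-order remainder $2F-\CenteredGamma_1(F)$ re-enters quadratically rather than linearly.
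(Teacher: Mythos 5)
Your high-level strategy is the paper's strategy: Stein's method for the centered Gamma target, Malliavin integration by parts, and an iteration carried to third order so that the first remainder $2F-\CenteredGamma_1(F)$ re-enters quadratically, producing the centered increments $\CenteredGamma_{j+1}(F)-2\CenteredGamma_j(F)$ and the cumulant differences. However, your execution has a genuine gap, and it sits exactly at the point you yourself flag as delicate. You propose to run the whole argument with a \emph{single} Stein solution $f_h$ and uniform bounds $\norm{f_h^{(k)}}_\infty\le c_k(\nu)$ for $k=1,2,3$, with the last increment "estimated directly by $\norm{f_h'''}_\infty$". For $h\in\mathcal{H}_2$ no such third-derivative bound is available: the regularity theory for the Gamma Stein equation (\cite[Theorem~2.3]{d-p}, which is what the paper invokes) gains one order of smoothness, so $h\in\mathcal{H}_2$ yields bounded $f_h'$ and $f_h''$ only; a uniform bound on $f_h'''$ would require $h\in\mathcal{H}_3$, i.e.\ precisely the stronger hypothesis one must avoid if the conclusion is to hold in $d_2$. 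The paper circumvents this not by differentiating $f_h$ a third time, but by \emph{iterating the Stein equation}: it sets $g:=f_h'$, solves a new Stein equation with data $g$ to get $\phi$, then solves another one with data $\phi$ to get $\psi$ (Lemma \ref{lem:AuxiliarlyLemmaForMainTheorem}). Each auxiliary solution needs only Lipschitz data, so the regularity requirement resets at every step and $\mathcal{H}_2$ suffices.

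This is not just bookkeeping: without the auxiliary solutions $\phi$ and $\psi$, your "twofold further integration by parts" cannot produce the terms in \eqref{main-theoretical-estimate}. With the operators $\Gamma_{j+1}(F)=\sprod{D\Gamma_j(F),-DL^{-1}F}_{\HH}$, formula \eqref{eq:IntegrationByParts} is applied to expectations carrying an explicit factor of $F$, i.e.\ to terms $\E\big[F\phi(F)\CenteredGamma_j(F)\big]$, and the only source of such a factor is the $x\phi(x)$ part of a Stein operator — which enters only after one replaces $g(F)-\E[g(G(\nu))]$ by $2(F+\nu)\phi'(F)-F\phi(F)$ for a \emph{new} solution $\phi$; this substitution is what generates $\E\big[\phi'(F)\big(\CenteredGamma_1(F)-2F\big)^2\big]$ and, at the next level, the mixed term via $\psi$. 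If instead you integrate $\E[f_h'(F)\CenteredGamma_1(F)]$ by parts using only $f_h$ and its derivatives, you are forced to pair against $-DL^{-1}\Gamma_1(F)$, which produces the alternative operators of \eqref{eq:GammaOperatorAltDefinition} and leads to Proposition \ref{Prop:H3-metric}: a bound in the smoother metric $d_3$ whose right-hand side contains neither the quadratic term $\E[(2F-\CenteredGamma_1(F))^2]$ nor the mixed term — exactly the estimate your proposal was meant to improve upon. A smaller omission: for the new operators $\E[\Gamma_3(F)]\neq\frac{1}{3!}\kappa_4(F)$; by Proposition \ref{Prop:RelationOldAndNewGamma}(c) one has $\E[\Gamma_3(F)]=\frac13\kappa_4(F)-\Var(\Gamma_1(F))$, and the extra variance must be rewritten as $\Var(\Gamma_1(F)-2F)-4\kappa_2(F)+2\kappa_3(F)$ and absorbed into the first term of the bound. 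To repair your plan, replace the claimed bounds on $f_h''$, $f_h'''$ by the chain $f_h\to\phi\to\psi$ of iterated Stein solutions and the identity of Lemma \ref{lem:AuxiliarlyLemmaForMainTheorem}(b) applied with $r=1$ and $r=2$.
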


To simplify computations, we begin with the following Lemma.

\begin{Lem} \label{lem:AuxiliarlyLemmaForMainTheorem}
	Let $g : \R \to \R$ be a Lipschitz function, where $g$ and $g'$ are bounded by a constant only depending on $\nu > 0$. Consider the solution $\phi$ to the Stein equation $g(x) - \E[g(G(\nu))] = 2(x+\nu) \phi'(x) - x \phi(x) $. Then
	\begin{itemize}
		\item[(a)] $\phi$ is again a Lipschitz function, where $\phi$ and $\phi'$ are bounded by a constant only depending on $\nu$.
		\item[(b)] If $F\in \mathbb{D}^{\infty}$ is a centered random variable with variance $\E[F^2]= 2 \nu$, then for any $r \in \N$:
		\begin{align*}
		\E \Big[ g(F) \Big( \CenteredGamma_r(F) - 2 \CenteredGamma_{r-1}(F) \Big) \Big] = & - \E\Big[ \phi'(F) \Big( \CenteredGamma_r(F) - 2 \CenteredGamma_{r-1}(F) \Big) \Big( \CenteredGamma_1(F) - 2 F \Big) \Big] \\
		& - \E \Big[ \phi(F) \Big( \Gamma_{r+1}(F) - 2 \Gamma_r(F) \Big) \Big].
		\end{align*}
	\end{itemize}
\end{Lem}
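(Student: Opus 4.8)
The plan is to treat the two parts separately: part (a) is a classical Stein-factor estimate that I would derive from the explicit solution of the first-order linear ODE, while part (b) is a purely algebraic identity obtained from the Stein equation followed by a single Malliavin integration by parts.

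For part (a), write $c := \E[g(G(\nu))]$ and let $p$ denote the density of $G(\nu)$ on $(-\nu,\infty)$, which is proportional to $(x+\nu)^{\nu/2-1}e^{-x/2}$. The equation $2(x+\nu)\phi'(x)-x\phi(x)=g(x)-c$ has integrating factor proportional to $(x+\nu)p(x)$, and the unique bounded solution is
\[
\phi(x)=\frac{1}{2(x+\nu)p(x)}\int_{-\nu}^{x}(g(y)-c)\,p(y)\,\ud y=\frac{-1}{2(x+\nu)p(x)}\int_{x}^{\infty}(g(y)-c)\,p(y)\,\ud y,
\]
the two forms agreeing because $\int_{-\nu}^{\infty}(g-c)\,p=0$. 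Boundedness of $\phi$ follows from $\abs{g-c}\le 2\norm{g}_\infty$ together with the elementary density estimates that $\int_{-\nu}^{x}p$ is of the order of $(x+\nu)p(x)$ near the left endpoint and $\int_{x}^{\infty}p$ is of the order of $(x+\nu)p(x)$ in the right tail. For the derivative I would use the ODE itself, $\phi'(x)=\bigl(g(x)-c+x\phi(x)\bigr)/\bigl(2(x+\nu)\bigr)$; boundedness at infinity is immediate, and at the singular endpoint $x=-\nu$ one checks from the explicit formula that $\nu\phi(-\nu)=g(-\nu)-c$, so the numerator vanishes there. Integrating $g$ by parts in the numerator to expose $g'$ and invoking $\norm{g'}_\infty<\infty$ then shows the numerator vanishes at the same rate as the denominator, giving a bound on $\phi'$ depending only on $\nu,\norm{g}_\infty,\norm{g'}_\infty$. (Alternatively one may quote the Gamma Stein-factor bounds from \cite{d-p} or \cite{n-pe-1}.)

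For part (b), abbreviate $\bar\Delta_r:=\CenteredGamma_r(F)-2\CenteredGamma_{r-1}(F)$, which is centered, so the left-hand side equals $\E[(g(F)-c)\bar\Delta_r]$. Substituting $g(F)-c=2(F+\nu)\phi'(F)-F\phi(F)$ and expanding gives
\[
\E[g(F)\bar\Delta_r]=2\,\E[F\phi'(F)\bar\Delta_r]+2\nu\,\E[\phi'(F)\bar\Delta_r]-\E[F\phi(F)\bar\Delta_r].
\]
The crucial step is to integrate by parts in the last term only (leaving the $\phi'$-terms untouched, which avoids any appearance of $\phi''$). Applying \eqref{eq:IntegrationByParts} with first slot $F$ (so that $\E[F]=0$) and $W:=\phi(F)\bar\Delta_r$, the Leibniz rule and the chain rule \eqref{eq:ChainRule} yield $DW=\phi'(F)\bar\Delta_r\,DF+\phi(F)\,D\bigl(\Gamma_r(F)-2\Gamma_{r-1}(F)\bigr)$, whence
\[
\E[F\phi(F)\bar\Delta_r]=\E\bigl[\phi'(F)\bar\Delta_r\,\sprod{DF,-DL^{-1}F}_{\HH}\bigr]+\E\bigl[\phi(F)\,\sprod{D(\Gamma_r(F)-2\Gamma_{r-1}(F)),-DL^{-1}F}_{\HH}\bigr].
\]
By the definition \eqref{eq:GammOperatorDefinition} of the Gamma operators the two inner products equal $\Gamma_1(F)$ and $\Gamma_{r+1}(F)-2\Gamma_r(F)$, respectively. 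Substituting back, and using $\CenteredGamma_1(F)-2F=\Gamma_1(F)-2\nu-2F$ (since $\E[\Gamma_1(F)]=\kappa_2(F)=2\nu$), the four resulting terms reassemble exactly into the claimed right-hand side.

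The main obstacle is the analytic part (a), not the algebra of (b): controlling $\phi'$ at the left endpoint $x=-\nu$ requires matching the vanishing of the numerator against the vanishing denominator, and this is precisely where the hypothesis $\norm{g'}_\infty<\infty$ is consumed. A secondary, routine point is the legitimacy of the integration by parts in (b): because $F\in\mathbb{D}^{\infty}$ has a finite chaos expansion, each $\Gamma_j(F)$ lies in $\mathbb{D}^{\infty}$, and with $\phi,\phi'$ bounded the variable $W=\phi(F)\bar\Delta_r$ belongs to $\mathbb{D}^{1,2}$, so \eqref{eq:IntegrationByParts} and the chain rule apply directly.
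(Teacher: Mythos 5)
Your proposal is correct and takes essentially the same route as the paper: for part (b) the computation is identical in substance --- replace $g(F)-\E[g(G(\nu))]$ via the Stein equation, apply the integration by parts formula \eqref{eq:IntegrationByParts} together with the product/chain rule to the $F\phi(F)$-term, identify $\sprod{D\Gamma_r(F),-DL^{-1}F}_{\HH}=\Gamma_{r+1}(F)$ from \eqref{eq:GammOperatorDefinition}, and absorb the constant $2\nu=\E[\Gamma_1(F)]$ into $\CenteredGamma_1(F)$; the paper merely performs the integration by parts on the $\CenteredGamma_r$ and $\CenteredGamma_{r-1}$ pieces separately instead of on the combined difference, which is a cosmetic distinction. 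The only real divergence is part (a), where the paper just cites \cite[Theorem~2.3~(a)]{d-p} while you sketch a self-contained derivation from the explicit integrating-factor solution with the endpoint cancellation $\nu\phi(-\nu)=g(-\nu)-\E[g(G(\nu))]$ --- a sound (if abbreviated) argument, and you offer the citation as a fallback in any case.
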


\begin{proof}
	Part \textit{(a)} is a consequence of \cite[Theorem~2.3~(a)]{d-p}. For part \textit{(b)}, note that $2\nu = \E[ \Gamma_1(F)]$. Thus
	\begin{align*}
	\E \Big[ g(F) \Big( \CenteredGamma_r(F) - 2 \CenteredGamma_{r-1}(F) \Big) \Big] & = \E \Big[ \Big( g(F) - \E\big[g(G(\nu)) \big] \Big)  \Big( \CenteredGamma_r(F) - 2 \CenteredGamma_{r-1}(F) \Big) \Big] \\
	& = \E \Big[ \Big( 2(F+\nu) \phi'(F) - F \phi(F) \Big) \Big( \CenteredGamma_r(F) - 2 \CenteredGamma_{r-1}(F) \Big) \Big]  \\
	& \hspace{-10em} = 
	2 \, \E\big[ F \phi'(F) \CenteredGamma_r(F) \big]
	+ \E \big[ \Gamma_1(F) \big] \E \big[\phi'(F) \CenteredGamma_r(F) \big]
	- \E \big[ F \phi(F) \CenteredGamma_r(F) \big] \\
	& \hspace{-9em} 
	- 4 \, \E\big[ F \phi'(F) \CenteredGamma_{r-1}(F) \big]
	- 2 \, \E\big[ \Gamma_1(F) \big] \E \big[ \phi'(F) \CenteredGamma_{r-1}(F) \big]
	+ 2 \, \E \big[ F \phi(F) \CenteredGamma_{r-1}(F) \big] \\
	& = : \sum_{i=1}^{6} T_i.
	\end{align*}
	Now, we use the integration-by-parts formula \eqref{eq:IntegrationByParts}  in combination with the chain rule \eqref{eq:ChainRule} to obtain
	\begin{align*}
	T_3 + T_2 & = - \E \big[ F \phi(F) \CenteredGamma_r(F) \big] + \E \big[ \Gamma_1(F) \big] \E \big[\phi'(F) \CenteredGamma_r(F) \big] \\
	& = - \E \big[ \Gamma_1(F) \CenteredGamma_r(F) \phi'(F) \big] - \E \big[ \phi(F) \Gamma_{r+1}(F) \big] + \E \big[ \Gamma_1(F) \big] \E \big[\phi'(F) \CenteredGamma_r(F) \big] \\
	& = - \E \big[ \CenteredGamma_1(F) \CenteredGamma_r(F) \phi'(F) \big] - \E \big[ \phi(F) \Gamma_{r+1}(F) \big],
	\end{align*}
	and similarly
	\[
	T_6 + T_5 = 2 \, \E \big[ \CenteredGamma_1(F) \CenteredGamma_{r-1}(F) \phi'(F) \big] + 2 \, \E \big[ \phi(F) \Gamma_r(F) \big].
	\]
	Hence, putting everything together, the result follows.
\end{proof}

\begin{proof}[Proof of Theorem \ref{thm:MainMalliavinSteinBound}]

	As a starting point, we use the Stein equation (2.7) from \cite{d-p}. 
	Let $h \in \mathcal{H}_2$ be a test function, then by using the integration by parts formula \eqref{eq:IntegrationByParts}, we get
	\begin{align*}
	\abs*{\E[h(F)] - \E[h(G(\nu))] } & = \abs*{ \E \left[ 2(F+ \nu) \, f'(F) - F f(F) \right] } \\
	& = \abs*{ \E \left[ 2(F+ \nu) \, f'(F) - f'(F) \sprod{DF, - D L^{-1} F }_{\mathfrak{H}} \right] } \\
	& = \abs*{ \E \left[ f'(F) \left( 2F - \CenteredGamma_1(F) \right)\right] }.
	\end{align*}
Now set $g:=f'$. Then $g$ is a bounded Lipschitz function whose derivative $g'$ is bounded by a constant only depending on $\nu$, see \cite[Theorem~2.3~(b)]{d-p}.
Denote by $\phi$ the solution to the Gamma Stein equation $g(x) - \E[g(G(\nu))] = 2(x+\nu) \phi'(x) - x \phi(x) $, and by $\psi$ the solution to $\phi(x) - \E[\phi(G(\nu))] = 2(x+\nu) \psi'(x) - x \psi(x) $. By Lemma \ref{lem:AuxiliarlyLemmaForMainTheorem} \textit{(a)}, both $\phi$ and $\psi$ are Lipschitz, where the functions themselves, as well as their derivatives are bounded by a constant only depending on $\nu$. Now apply Lemma \ref{lem:AuxiliarlyLemmaForMainTheorem} \textit{(b)} twice, to get
\begin{align*}
\E & \Big[ g(F) \Big( 2 F - \CenteredGamma_{1}(F) \Big) \Big] = \E\Big[ \phi'(F) \Big( \CenteredGamma_1(F) - 2 F \Big)^2 \Big] + \E \Big[ \phi(F) \Big( \Gamma_{2}(F) - 2 \Gamma_1(F) \Big) \Big] \\
& {}={} \E\Big[ \phi'(F) \Big( \CenteredGamma_1(F) - 2 F \Big)^2 \Big] - \E [\phi(F)] \Big( \frac{1}{2} \kappa_3(F) - 2 \kappa_2(F) \Big) \\
& \quad + \E \Big[ \phi(F) \Big( \CenteredGamma_{2}(F) - 2 \CenteredGamma_1(F) \Big) \Big] \\
& {}={} \E\Big[ \phi'(F) \Big( \CenteredGamma_1(F) - 2 F \Big)^2 \Big] - \E [\phi(F)] \Big( \frac{1}{2} \kappa_3(F) - 2 \kappa_2(F) \Big) \\
& \quad - \E\Big[ \psi'(F) \Big( \CenteredGamma_2(F) - 2 \CenteredGamma_{1}(F) \Big) \Big( \CenteredGamma_1(F) - 2 F \Big) \Big] - \E \Big[ \psi(F) \Big( \Gamma_{3}(F) - 2 \Gamma_2(F) \Big) \Big] \\
& {}={} \E\Big[ \phi'(F) \Big( \CenteredGamma_1(F) - 2 F \Big)^2 \Big] - \E\Big[ \psi'(F) \Big( \CenteredGamma_2(F) - 2 \CenteredGamma_{1}(F) \Big) \Big( \CenteredGamma_1(F) - 2 F \Big) \Big] \\
& \quad - \E \Big[ \psi(F) \Big( \CenteredGamma_{3}(F) - 2 \CenteredGamma_2(F) \Big) \Big] - \E [\phi(F)] \Big( \frac{1}{2} \kappa_3(F) - 2 \kappa_2(F) \Big) \\
& \quad + \E \big[ \psi(F) \big] \Big( \E \big[\Gamma_3(F) \big] - \kappa_3(F) \Big).
\end{align*}
Note that we cannot translate $\E[\Gamma_3(F)]$ directly into the fourth cumulant, but instead by Proposition \ref{Prop:RelationOldAndNewGamma} part \textit{(c)} we have $\E[\Gamma_3(F)] = \frac{1}{3} \kappa_4(F) - \Var(\Gamma_1(F))$. The variance term can be written as
\begin{align*}
\Var\big( \Gamma_1(F) \big) & = \Var\big( \Gamma_1(F) - 2 F \big) - 4 \kappa_2(F) + 4 \E\big[ F \Gamma_1(F) \big] \\
& = \Var\big( \Gamma_1(F) - 2 F \big) - 4 \kappa_2(F) + 4 \E\big[ \Gamma_2(F) \big] \\
& = \Var\big( \Gamma_1(F) - 2 F \big) - 4 \kappa_2(F) + 2 \kappa_3(F).
\end{align*}
Putting everything together, we obtain
\begin{align*}
\E & \Big[ g(F) \Big( 2 F - \CenteredGamma_{1}(F) \Big) \Big] \\
& {}={} \E\Big[ \phi'(F) \Big( \CenteredGamma_1(F) - 2 F \Big)^2 \Big] - \E\Big[ \psi'(F) \Big( \CenteredGamma_2(F) - 2 \CenteredGamma_{1}(F) \Big) \Big( \CenteredGamma_1(F) - 2 F \Big) \Big] \\
& \quad - \E \Big[ \psi(F) \Big( \CenteredGamma_{3}(F) - 2 \CenteredGamma_2(F) \Big) \Big] - \E [\phi(F)] \Big( \frac{1}{2} \kappa_3(F) - 2 \kappa_2(F) \Big) \\
& \quad - \E \big[ \psi(F) \big] \Var\big( \Gamma_1(F) - 2 F \big) + \E \big[ \psi(F) \big] \Big(  \frac{1}{3} \kappa_4(F) - 3 \kappa_3(F) + 4 \kappa_2(F) \Big).
\end{align*}
\\
The result follows by applying Cauchy-Schwarz inequality, as well as using the fact that $\kappa_2(G(\nu)) = \kappa_2(F) = 2 \nu$, $\kappa_3(G(\nu))= 8\nu$ and $\kappa_4(G(\nu)) = 48 \nu$, see \eqref{eq:CenteredGammaCumulants}.
\end{proof}

\begin{Rem}
\begin{itemize}
	\item[\textit{(i)}] The argument based on iterating the Stein equation, instead of applying Cauchy-Schwarz inequality after using the Malliavin integration by parts formula once, implemented in the proof of Theorem \ref{thm:MainMalliavinSteinBound}, is completely analogous to the main result from \cite[p.~3129]{n-p-optimal}. The backbone of this line of arguments is the fact that when applying higher order Gamma operators on chaotic random variables, the resulting random variables often become smaller (in terms of variance).
		\item[\textit{(ii)}] A natural framework in which to apply our main Theorem \ref{thm:MainMalliavinSteinBound}, is when the candidate random variable $F$ is chaotic, meaning that $F=I_q(f)$ for some $q \ge 2$, and kernel $f \in \mathfrak{H}^{\odot q}$. In this framework, it is well-known (e.g. \cite{InvPrinForHomSums}) that the first summand in the RHS of estimate \eqref{main-theoretical-estimate} can be further controlled by using the third and fourth cumulants, namely that
	\[  \E\left[ \left( 2F - \CenteredGamma_1(F) \right)^2 \right] = \Var\left( \Gamma_1(F) - 2 F \right)  \leq \frac{q-1}{3q} \,  \{ \kappa_4(F) -12 \kappa_3(F) + 48 \nu \}. \]
	We emphasize that, when $q \ge 4$ and $F$ is chaotic, the linear combination of the cumulants $\kappa_4(F) -12 \kappa_3(F) + 48 \nu $ is positive, see \cite[Corollary 4.4]{n-p-noncentral}.
	\item[\textit{(iii)}] In order to interpret our upper bound in Theorem \ref{thm:MainMalliavinSteinBound} in the language of the cumulants, analogous to that achieved in \cite{n-p-optimal}, for a chaotic random variable  $F=I_q(f)$ with $q \ge 2$, we need cumulant-type inequalities comparable to Proposition 4.3 in \cite{OptBerryEsseenRates} for the remaining terms in the RHS of \eqref{main-theoretical-estimate};
	\[  \Var  \big( \Gamma_2(F) - 2 \, \Gamma_1(F)  \big) \quad \text{and} \quad \Var  \big( \Gamma_3(F) - 2 \, \Gamma_2(F)  \big)  . \]
	This is a deep problem and for the time being, such inequalities are difficult to tackle in full generality using the available techniques such as \textit{contraction operators} or the recent machinery of \textit{Markov triplets} \cite{ledoux4MT}\cite{a-c-p}\cite{a-m-m-p}.  For example, a suitable cumulant counterpart for studying the variance of the iterated Gamma quantity $\Gamma_3(F)$ is the $8$th cumulant $\kappa_8(F)$. There exists an explicit representation (see \cite[Proposition 3.9]{OptBerryEsseenRates}) of the quantity $\Gamma_3(F)$ in terms of appropriate contractions, involving the kernel function $f$. However, due to several zero-contractions appearing in the cumulant side $\kappa_8(F)$ which do not show up in the Wiener chaotic representation of $\Gamma_3(F)$, such comparison is impossible.  The second major obstacle is that one needs to control the variance of an explicit linear structure of Gamma operators in terms of an ``efficient'' linear combination of cumulants. Here with ``efficient'' we mean that when plugging in the target random variable $G(\nu)$, the introduced cumulant combination vanishes. For instance, one has to note that  $\kappa_4(G(\nu)) -12 \kappa_3(G(\nu)) + 48 \nu = 0$. Thus, in Section \ref{sec:2wiener}, in order to analyze these variance quantities, we will focus on the case of $F$ belonging to the second Wiener chaos, where we have explicit representations in terms of the eigenvalues of the corresponding Hilbert-Schmidt operator.
\end{itemize}
\end{Rem}

As we will see later, when focussing on second Wiener chaos, the most critical term to analyze is $\Var  \left( \Gamma_3(F) - 2 \, \Gamma_2(F)  \right)$. If we choose our test function $h$ to be smoother, we can deduce an upper bound in the smoother probability metric $d_3$ (see  \eqref{eq:d_kMetricsDefinition}  for definition) without the need of iterating Stein's method. 

\begin{Prop}\label{Prop:H3-metric}
	Let $F$ be a centered random variable admitting a finite chaos expansion with $\Var(F) = 2 \nu$.  Let $G(\nu) \sim CenteredGamma(\nu)$. Then there exists a constant $C>0$ (only depending on $\nu$), such that
	\begin{equation}\label{eq:SecondVersionOfTheorem}	
	d_{3} (F,G(\nu)) \leq C \bigg\{ \sqrt{\Var( \CenteredGamma_{alt,3}(F) - 2 \CenteredGamma_{alt,2}(F) )} + \abs{\kappa_3(F) - \kappa_3(G(\nu))} + \abs{\kappa_4(F) - \kappa_4(G(\nu))} \bigg\} .
	\end{equation}
\end{Prop}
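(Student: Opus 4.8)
The plan is to exploit the additional smoothness of the test functions in $\mathcal{H}_3$ so as to replace the nested Stein equations of the proof of Theorem~\ref{thm:MainMalliavinSteinBound} by a direct integration-by-parts recursion along the \emph{alternative} Gamma hierarchy. As before, I would fix $h \in \mathcal{H}_3$, let $f$ solve the Gamma Stein equation $h(x) - \E[h(G(\nu))] = 2(x+\nu)f'(x) - x f(x)$, and use the integration-by-parts formula \eqref{eq:IntegrationByParts} (together with $\E[\Gamma_1(F)] = 2\nu$) exactly as in the proof of Theorem~\ref{thm:MainMalliavinSteinBound} to obtain
\[
\E[h(F)] - \E[h(G(\nu))] = \E\big[ f'(F)\,(2F - \CenteredGamma_1(F)) \big] = -\,\E\big[ f'(F)\,(\CenteredGamma_{alt,1}(F) - 2\CenteredGamma_{alt,0}(F)) \big],
\]
where I used $\Gamma_1 = \Gamma_{alt,1}$ from Proposition~\ref{Prop:RelationOldAndNewGamma}(a) and $\CenteredGamma_{alt,0}(F) = F$.

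The engine of the argument is the elementary identity
\[
\E\big[ \phi(F)\, \CenteredGamma_{alt,r}(F) \big] = \E\big[ \phi'(F)\, \Gamma_{alt,r+1}(F) \big], \qquad r \geq 0,
\]
valid for smooth $\phi$ with bounded derivatives, which one proves by applying \eqref{eq:IntegrationByParts} to the pair $(\Gamma_{alt,r}(F), \phi(F))$, using the chain rule \eqref{eq:ChainRule} in the form $D\phi(F) = \phi'(F)\,DF$, and recognizing $\sprod{DF, -DL^{-1}\Gamma_{alt,r}(F)}_{\mathfrak{H}} = \Gamma_{alt,r+1}(F)$ from the definition \eqref{eq:GammaOperatorAltDefinition}. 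I would apply this identity twice — first with $\phi = f'$, then with $\phi = f''$ — and, after each step, peel off the mean via $\Gamma_{alt,j}(F) = \CenteredGamma_{alt,j}(F) + \frac{1}{j!}\kappa_{j+1}(F)$, using the cumulant identity $\E[\Gamma_{alt,j}(F)] = \frac{1}{j!}\kappa_{j+1}(F)$ valid for all $j \geq 0$ (see \cite{CumOnTheWienerSpace}). This yields the closed expression
\[
\E[h(F)] - \E[h(G(\nu))] = -\,\E\big[ f'''(F)\,(\CenteredGamma_{alt,3}(F) - 2\CenteredGamma_{alt,2}(F)) \big] - \big( \tfrac16 \kappa_4(F) - \kappa_3(F) \big)\E[f'''(F)] - \big( \tfrac12 \kappa_3(F) - 2\kappa_2(F) \big)\E[f''(F)].
\]

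It then remains to estimate the three terms. By the Cauchy--Schwarz inequality, and because $\CenteredGamma_{alt,3}(F) - 2\CenteredGamma_{alt,2}(F)$ is centered, the first is at most $\norm{f'''}_\infty \sqrt{\Var(\CenteredGamma_{alt,3}(F) - 2\CenteredGamma_{alt,2}(F))}$. For the two cumulant terms, the decisive observation is that the same linear combinations vanish at the target: from $\kappa_2(G(\nu)) = 2\nu$, $\kappa_3(G(\nu)) = 8\nu$ and $\kappa_4(G(\nu)) = 48\nu$ (see \eqref{eq:CenteredGammaCumulants}) one gets $\tfrac16\kappa_4(G(\nu)) - \kappa_3(G(\nu)) = 0$ and $\tfrac12\kappa_3(G(\nu)) - 2\kappa_2(G(\nu)) = 0$, so that, since $\kappa_2(F) = 2\nu = \kappa_2(G(\nu))$, the coefficients equal $\tfrac16(\kappa_4(F) - \kappa_4(G(\nu))) - (\kappa_3(F) - \kappa_3(G(\nu)))$ and $\tfrac12(\kappa_3(F) - \kappa_3(G(\nu)))$ respectively. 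Bounding $\abs{\E[f''(F)]} \leq \norm{f''}_\infty$ and $\abs{\E[f'''(F)]} \leq \norm{f'''}_\infty$ and taking the supremum over $h \in \mathcal{H}_3$ then produces the asserted inequality.

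The main obstacle is exactly the regularity input hidden in the last step: I need that, for $h \in \mathcal{H}_3$, the Stein solution $f$ is in $C^3$ with $\norm{f''}_\infty$ and $\norm{f'''}_\infty$ bounded by a constant depending only on $\nu$. Boundedness of $f$ and $f'$ is already provided by Lemma~\ref{lem:AuxiliarlyLemmaForMainTheorem}(a) (i.e.\ \cite[Theorem~2.3]{d-p}); the higher-order estimates should follow by differentiating the Stein equation once and twice, solving for $f''$ and $f'''$, and iterating the first-order regularity bounds, now exploiting $\norm{h''}_\infty \leq 1$ and $\norm{h'''}_\infty \leq 1$. Making these constants explicit and controlling $f^{(k)}$ near the endpoint $x = -\nu$ of the support of $G(\nu)$ is the genuinely technical part of the proof.
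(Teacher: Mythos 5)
Your proof is correct and follows essentially the same route as the paper's: the same Gamma Stein equation, the same three applications of the integration-by-parts formula \eqref{eq:IntegrationByParts} iterated along the $\Gamma_{alt}$ hierarchy (your ``engine identity'' $\E[\phi(F)\,\CenteredGamma_{alt,r}(F)] = \E[\phi'(F)\,\Gamma_{alt,r+1}(F)]$ is exactly the step the paper repeats), the same centering and cumulant bookkeeping via $\E[\Gamma_{alt,j}(F)]=\kappa_{j+1}(F)/j!$, and the same Cauchy--Schwarz conclusion. The one point you flag as technical---boundedness of $f''$ and $f'''$ by a constant depending only on $\nu$---is handled no more explicitly in the paper, which simply asserts it (it follows from \cite[Theorem~2.3]{d-p} applied to the smoother test functions in $\mathcal{H}_3$); incidentally, your sign for the second-derivative term, $-\bigl(\tfrac12\kappa_3(F)-2\kappa_2(F)\bigr)\E[f''(F)]$, is the correct one, the paper's displayed $\tfrac12\kappa_3(F)-4\nu$ being a harmless sign slip that disappears once absolute values are taken.
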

\begin{proof}
	Let $h \in \mathscr{H}_3$ be a test function and denote by $f$ the solution to the Stein equation $h(x) - \E[h(G(\nu))] = 2(x+\nu) f'(x) - x f(x) $. Now we use the Malliavin integration by parts formula \eqref{eq:IntegrationByParts} a total number of three times to get
	\begin{align*}
	& \E[h(F)] - \E[h(G(\nu))]  = \E \left[ 2(F+\nu) f'(F) - F f(F) \right] \\
	& \quad = \E \left[ f'(F) \left( 2F - \CenteredGamma_{alt,1}(F) \right) \right] \\
	& \quad = \E \left[ f''(F) \left( 2 \Gamma_{alt,1}(F) - \Gamma_{alt,2}(F) \right) \right] \\
	& \quad = \E \left[ f''(F) \left( 2 \CenteredGamma_{alt,1}(F) - \CenteredGamma_{alt,2}(F) \right) \right] + \E [f''(F)] \Big( 2 \E[\Gamma_{alt,1}(F)] - \E[\Gamma_{alt,2}(F)] \Big) \\
	& \quad = \E \left[ f'''(F) \left( 2 \Gamma_{alt,2}(F) - \Gamma_{alt,3}(F) \right) \right] + \E[f''(F)] \Big( \frac{1}{2} \kappa_3(F) - 4 \nu \Big) \\
	& \quad = \E \left[ f'''(F) \left( 2 \CenteredGamma_{alt,2}(F) - \CenteredGamma_{alt,3}(F) \right) \right] + \E[f'''(F)] \Big( 2 \E[\Gamma_{alt,2}(F)]  - \E[\Gamma_{alt,3}(F)] \Big) \\
	& \hspace{17.5em} + \E[f''(F)] \Big( \frac{1}{2} \kappa_3(F) - 4 \nu \Big) \\
	& \quad = \E \left[ f'''(F) \left( 2 \CenteredGamma_{alt,2}(F) - \CenteredGamma_{alt,3}(F) \right) \right] + \E[f'''(F)] \big( \kappa_3(F)  - 8 \nu \big) \\
	& \hspace{2.5em}  +  E[f'''(F)] \Big( 8 \nu - \frac{1}{6} \kappa_4(F) \Big) + \E[f''(F)] \Big( \frac{1}{2} \kappa_3(F) - 4 \nu \Big).
	\end{align*}
	We know that $\E[\kappa_3(G(\nu))] = 8 \nu$ and $\E[\kappa_4(G(\nu))] = 48 \nu$. Combining this with the boundedness of $f''$, $f'''$ and Cauchy-Schwarz inequality, we obtain
	\begin{align*} 
	&  \abs*{\E[h(F)] - \E[h(G(\nu))]} \\
	\leq {} & {} C \, \bigg\{  \E \left[ \abs*{ \left( \CenteredGamma_{alt,3}(F) - 2 \CenteredGamma_{alt,2}(F) \right) } \right] + \abs{\kappa_3(F) - \kappa_3(G(\nu))} + \abs{\kappa_4(F) - \kappa_4(G(\nu))} \bigg\} \\
	\leq {} & {} C \, \Big\{  \sqrt{\Var( \Gamma_{alt,3}(F) - 2 \Gamma_{alt,2}(F) )} + \abs{\kappa_3(F) - \kappa_3(G(\nu))} + \abs{\kappa_4(F) - \kappa_4(G(\nu))} \Big\}. 
	\end{align*}
\end{proof}
\begin{Rem}
Here, we have used the traditional Gamma operators as defined in \eqref{eq:GammaOperatorAltDefinition}. This way, we get a simple proof for the upper bound in a smoother integral probability metric. In the next section, we will focus only on random elements $F$ belonging to the second Wiener chaos, and it can be checked that in this setup the two definitions of Gamma operators coincide.
\end{Rem}

\section{The Case of Second Wiener Chaos}\label{sec:2wiener}

Throughout this section we assume that $F=I_2(f)$, for some $f \in \mathfrak{H}^{\odot 2}$, belongs to the second Wiener chaos. It is a classical result (see \cite[Section 2.7.4]{n-pe-1}) that these kind of random variables can be analyzed through the associated \textit{Hilbert-Schmidt operator }
\[ A_f : \mathfrak{H} \to \mathfrak{H}, \quad g \mapsto f \cont{1} g. \]
Denote by $\{ c_{f,i} : i \in \N \}$ the set of eigenvalues of $A_f$. We also introduce the following sequence of auxiliary kernels
\[ \Big\{ f \contIterated{1}{p} f : p \geq 1 \Big\} \subset \HH^{\odot 2}, \]
defined recursively as $ f \contIterated{1}{1} f = f$, and, for $p \geq 2$,
\[ f \contIterated{1}{p} f = \Big( f \contIterated{1}{p-1} f \Big) \cont{1} f.  \]

\begin{Prop} (see e.g. \cite[p.~43]{n-pe-1})\mbox{} \\[-1em]\label{Prop:2choas-properties}
	 \begin{enumerate}
	 	\item The random element $F$ admits the representation \begin{equation} F = \sum_{i=1}^{\infty} c_{f,i} \left( N_i^2 - 1 \right), \label{eq:SecondWienerChaosEigenvalueRepresentation} \end{equation}
	 	where the $N_i$ are i.i.d. $\mathscr{N}(0,1)$ and the series converges in $L^2(\Omega)$ and almost surely.
	 	\item For every $p \geq 2$
	 	\begin{equation}
		\begin{split} 
		\kappa_p(F)  &=2^{p-1} (p-1)! \sum_{i=1}^{\infty} c_{f,i}^p  = 2^{p-1} (p-1)!  \langle f, f \contIterated{1}{p-1} f \rangle_{\HH}\\
		&= 2^{p-1} (p-1)! \Tr \left( A^p_f \right) \label{eq:SecondWienerChaosCumulantFormulaEigenvalues} \end{split}\end{equation}
		where $ \Tr ( A^p_f )$ stands for the trace of the $p$th power of operator $A_f$.
	 \end{enumerate}
\end{Prop}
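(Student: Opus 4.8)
The plan is to reduce everything to the spectral decomposition of $A_f$ and then exploit independence. Since $f \in \HH^{\odot 2}$ is symmetric and square-integrable, the operator $A_f$ is self-adjoint and Hilbert--Schmidt, hence compact. By the spectral theorem there is an orthonormal family $\{e_i\}_{i \in \N} \subset \HH$ of eigenvectors with real eigenvalues $\{c_{f,i}\}$, and $f$ admits the expansion $f = \sum_i c_{f,i}\, e_i \tensor e_i$ in $\HH^{\tensor 2}$, the summands $e_i \tensor e_i$ being orthonormal there. Writing $N_i := X(e_i)$, isonormality gives $\E[N_i N_j] = \sprod{e_i,e_j}_{\HH} = \delta_{ij}$, so the $N_i$ are i.i.d. $\mathscr{N}(0,1)$. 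Using the identity $H_q(X(h)) = I_q(h^{\tensor q})$ recorded earlier for $\norm{h}_{\HH}=1$, we get $I_2(e_i^{\tensor 2}) = H_2(N_i) = N_i^2 - 1$; by linearity and $L^2$-continuity of $I_2$ this yields the representation \eqref{eq:SecondWienerChaosEigenvalueRepresentation}, with $L^2$-convergence guaranteed by the isometry \eqref{eq:IsometryProperty} since $\sum_i c_{f,i}^2 = \norm{f}_{\HH^{\tensor 2}}^2 < \infty$.

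For the almost-sure convergence I would observe that the partial sums $S_n := \sum_{i=1}^{n} c_{f,i}(N_i^2-1)$ form a martingale with respect to the filtration generated by $N_1, \dots, N_n$, because $\E[N_{n+1}^2 - 1]=0$. This martingale is bounded in $L^2$, so the martingale convergence theorem gives almost-sure convergence of $S_n$, and its limit must coincide with the $L^2$-limit $F$. Turning to the cumulants, for each finite $n$ the variable $S_n$ is a sum of independent summands, so $\kappa_p(S_n) = \sum_{i=1}^{n} c_{f,i}^p\, \kappa_p(N_i^2-1)$. For $p \ge 2$ the deterministic shift $-1$ is irrelevant, and the cumulant generating function of $N_i^2 \sim \chi^2_1$ equals $-\tfrac12 \log(1-2t)$, whose $p$-th derivative at $0$ is $2^{p-1}(p-1)!$; hence $\kappa_p(S_n) = 2^{p-1}(p-1)! \sum_{i=1}^n c_{f,i}^p$. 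Letting $n \to \infty$, hypercontractivity on the fixed second chaos ensures $S_n \to F$ in every $L^p$, so all moments, and therefore all cumulants, converge, giving the first equality in \eqref{eq:SecondWienerChaosCumulantFormulaEigenvalues}.

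It then remains to identify $\sum_i c_{f,i}^p$ with the two remaining expressions. The equality with $\Tr(A_f^p)$ is immediate from the spectral decomposition, since the $p$-th power of a compact self-adjoint operator has eigenvalues $c_{f,i}^p$. For the contraction expression I would compute, in the eigenbasis, that $(e_i \tensor e_i) \cont{1} (e_j \tensor e_j) = \delta_{ij}\, e_i \tensor e_i$, whence an easy induction gives $f \contIterated{1}{p-1} f = \sum_i c_{f,i}^{p-1}\, e_i \tensor e_i$; pairing with $f$ then yields $\sprod{f, f \contIterated{1}{p-1} f}_{\HH^{\tensor 2}} = \sum_i c_{f,i}^p$. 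The only genuinely non-algebraic steps are the two convergence arguments -- the almost-sure convergence of the random series and the passage from $\kappa_p(S_n)$ to $\kappa_p(F)$ -- and I expect the latter, namely justifying that the cumulants of the $L^2$-limit equal the limit of the cumulants, to be the main point requiring care. It is resolved cleanly by the hypercontractive equivalence of all $L^p$-norms on a fixed Wiener chaos, which upgrades $L^2$-convergence to $L^p$-convergence and hence to convergence of moments.
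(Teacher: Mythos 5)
Your proof is correct, and it is essentially the canonical argument: the paper itself offers no proof of this proposition, citing it as classical (see \cite[Section~2.7.4]{n-pe-1}), and the textbook proof proceeds exactly as you do, via the spectral decomposition $f=\sum_i c_{f,i}\, e_i\otimes e_i$ of the Hilbert--Schmidt operator, the identification $I_2(e_i^{\otimes 2})=N_i^2-1$ with independent Gaussians, additivity and homogeneity of cumulants, and the $\chi^2_1$ cumulant generating function. Your two points of care -- martingale convergence for the almost-sure statement and hypercontractivity to pass from $L^2$-convergence to convergence of all moments and hence cumulants -- are precisely the right justifications and close the only non-algebraic gaps.
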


When $\nu$ is an integer i.e. $G(\nu)$ is a centered $\chi^2$ random variable with $\nu$ degrees of freedom, then \eqref{eq:SecondWienerChaosEigenvalueRepresentation} shows us that $G(\nu)$ is itself an element of the second Wiener chaos, where $\nu$-many of the eigenvalues are $1$ and the remaining ones are $0$. Hence, in this case, we deduce from \eqref{eq:SecondWienerChaosCumulantFormulaEigenvalues} that $\kappa_p(G(\nu)) = 2^{p-1} (p-1)! \, \nu$. Perhaps not surprisingly, this is also the case, when $\nu$ is any positive real number.

\begin{Lem}
	Let $\nu>0$ and $G(\nu) \sim CenteredGamma(\nu)$. Then 
	\begin{equation} 
	\kappa_p(G(\nu)) = \begin{dcases}
	0 & , p=1; \\
	2^{p-1} (p-1)! \, \nu &, p \geq 2.
	\end{dcases}  \label{eq:CenteredGammaCumulants}
	\end{equation}
\end{Lem}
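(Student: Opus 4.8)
The plan is to compute the cumulants directly from the characteristic function, exploiting the affine relationship $G(\nu) = 2\,\widehat{G}(\nu/2) - \nu$ between $G(\nu)$ and the standard Gamma random variable $\widehat{G}(\nu/2)$ recorded in the introduction. First I would recall that $\widehat{G}(\nu/2)$ has the standard Gamma density with shape parameter $\nu/2$ and rate $1$, whose characteristic function is $\E\big[e^{it\widehat{G}(\nu/2)}\big] = (1-it)^{-\nu/2}$ for $t$ in a neighbourhood of $0$. Substituting the affine relation then gives the characteristic function of the target,
\[
\phi_{G(\nu)}(t) = e^{-i\nu t}\,(1-2it)^{-\nu/2}.
\]

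Next I would pass to the cumulant generating function by taking logarithms:
\[
\log \phi_{G(\nu)}(t) = -i\nu t - \frac{\nu}{2}\,\log(1-2it),
\]
which is valid for $\abs{t}$ small enough that $\abs{2it}<1$, and this is all that is needed to extract the derivatives at $t=0$. Expanding $-\log(1-2it) = \sum_{n\geq 1}(2it)^n/n$ and inserting, the linear term of the sum equals $\tfrac{\nu}{2}\cdot 2it = i\nu t$, which exactly cancels the $-i\nu t$ contribution. This cancellation is precisely what forces $\kappa_1(G(\nu)) = 0$ and reflects the fact that $G(\nu)$ is centered. One is thus left with
\[
\log \phi_{G(\nu)}(t) = \frac{\nu}{2}\sum_{n=2}^{\infty} \frac{2^n\, i^n}{n}\, t^n.
\]

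Finally, applying the definition $\kappa_p(G(\nu)) = i^{-p}\,\partial_t^p \log \phi_{G(\nu)}(t)\big\vert_{t=0}$, I would read off the coefficient of $t^p$: for $p\geq 2$ it equals $\tfrac{\nu}{2}\cdot \tfrac{2^p i^p}{p}$, so that $\partial_t^p \log \phi_{G(\nu)}(t)\big\vert_{t=0} = p!\cdot \tfrac{\nu}{2}\cdot \tfrac{2^p i^p}{p} = (p-1)!\,\tfrac{\nu}{2}\,2^p i^p$; dividing by $i^p$ yields $\kappa_p(G(\nu)) = 2^{p-1}(p-1)!\,\nu$, while the vanishing $t^1$-coefficient gives $\kappa_1(G(\nu))=0$, exactly as claimed in \eqref{eq:CenteredGammaCumulants}.

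There is no genuine obstacle here; once the characteristic function is in hand the computation is routine. The only point requiring minor care is the cancellation of the linear term, which is really the affine-covariance of cumulants ($\kappa_p(aX+b) = a^p\,\kappa_p(X)$ for $p\geq 2$, whereas $\kappa_1(aX+b) = a\,\kappa_1(X)+b$) and is what cleanly separates the case $p=1$ from $p\geq 2$.
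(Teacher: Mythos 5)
Your proof is correct and follows essentially the same route as the paper: both start from the known closed form of the Gamma transform of $G(\nu) = 2\,\widehat{G}(\nu/2) - \nu$, take the logarithm to get $-\tfrac{\nu}{2}\log(1-2\,\cdot) - \nu\,\cdot$, and read off the cumulants, the only cosmetic difference being that the paper differentiates the real cumulant generating function by induction on $p$ while you expand the logarithm of the characteristic function as a power series and extract coefficients. The cancellation of the linear term that you highlight is exactly what produces the case split $p=1$ versus $p\geq 2$ in the paper's displayed derivative formula as well.
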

\begin{proof}
	Since the cumulant generating function of a Gamma random variable is well-known, we can easily compute that of $G(\nu)$ to be
	\[ K(t) = \frac{\nu}{2} \log \left( \frac{1}{1- 2t} \right) - \nu t. \]
	By simple induction over $p$, we obtain
	\[ \frac{\mathrm{d}^p K}{\mathrm{d} t^p} (t) = \begin{dcases}
	- \nu + \frac{\nu}{1-2t} & , p=1; \\
	\frac{\nu}{2} \frac{2^p (p-1)!}{(1-2t)^{p+1}}& , p \geq 2.
	\end{dcases}   \]
	The result now follows by letting $t=0$.
\end{proof}

\begin{Lem} \label{lem:Var(Gamma_r-2Gamma_r-1)}
	Let $F=I_2(f)$, for some $f \in \HH^{\odot 2}$, and denote by $A_f$ the corresponding Hilbert-Schmidt operator with eigenvalues $\{ c_{f,i} : i \geq 1 \}$. Then for $r \geq 1$
	\begin{align*}
	\E \Big[ \Big( \CenteredGamma_r(F) - 2 \CenteredGamma_{r-1}(F) \Big)^2 \Big] & = 2^{2r+1} \sum_{i=1}^{\infty} c_{f,i}^{2r} ( c_{f,i} - 1 )^2 \\
	& = \frac{1}{(2r+1)!}  \kappa_{2r+2}(F) - \frac{4}{(2r)!}  \kappa_{2r+1}(F) + \frac{4}{(2r-1)!}  \kappa_{2r}(F).
	\end{align*}
\end{Lem}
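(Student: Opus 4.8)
The plan is to reduce everything to the eigenvalue representation in Proposition \ref{Prop:2choas-properties} and then compute the iterated Gamma operators of $F$ explicitly. First I would fix an orthonormal system $\{e_i\}$ of eigenvectors of the (symmetric, Hilbert--Schmidt) operator $A_f$ with eigenvalues $c_{f,i}$, and set $N_i := X(e_i)$; these are i.i.d.\ $\mathscr{N}(0,1)$ and, by \eqref{eq:SecondWienerChaosEigenvalueRepresentation}, $F = \sum_i c_{f,i}(N_i^2-1)$. Because $F$ lives in a single Wiener chaos we have $L^{-1}F = -\tfrac12 F$, so that $-DL^{-1}F = \tfrac12 DF$; combined with $D(N_i^2-1) = 2N_ie_i$ this yields $-DL^{-1}F = \sum_i c_{f,i}N_ie_i$.

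The central step is then to establish, by induction on $r$ using the recursion \eqref{eq:GammOperatorDefinition} and the chain rule \eqref{eq:ChainRule}, the closed form
\[ \Gamma_r(F) = 2^r \sum_{i=1}^{\infty} c_{f,i}^{r+1} N_i^2, \qquad r \ge 1. \]
The base case is $\Gamma_1(F) = \sprod{DF, -DL^{-1}F}_{\HH} = 2\sum_i c_{f,i}^2 N_i^2$ by orthonormality, and for the induction step one computes $D\Gamma_r(F) = 2^{r+1}\sum_i c_{f,i}^{r+1}N_ie_i$, which paired against $-DL^{-1}F$ produces $2^{r+1}\sum_i c_{f,i}^{r+2}N_i^2$. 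Taking expectations and comparing with \eqref{eq:SecondWienerChaosCumulantFormulaEigenvalues} recovers $\E[\Gamma_r(F)] = 2^r\sum_i c_{f,i}^{r+1} = \tfrac{1}{r!}\kappa_{r+1}(F)$, whence the centered operator reads $\CenteredGamma_r(F) = 2^r\sum_i c_{f,i}^{r+1}(N_i^2-1)$.

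With this in hand the target quantity telescopes into a single series, namely $\CenteredGamma_r(F) - 2\CenteredGamma_{r-1}(F) = 2^r\sum_i c_{f,i}^r(c_{f,i}-1)(N_i^2-1)$. Since the variables $(N_i^2-1)$ are independent, centered, with $\Var(N_i^2-1) = 2$, all cross terms vanish and the second moment equals $2^{2r+1}\sum_i c_{f,i}^{2r}(c_{f,i}-1)^2$, which is the first claimed identity. For the second identity I would expand $(c_{f,i}-1)^2 = c_{f,i}^2 - 2c_{f,i} + 1$ and substitute $\sum_i c_{f,i}^p = \kappa_p(F)\big/\big(2^{p-1}(p-1)!\big)$ for $p = 2r+2,\,2r+1,\,2r$; the powers of two then cancel, leaving $\tfrac{1}{(2r+1)!}\kappa_{2r+2}(F) - \tfrac{4}{(2r)!}\kappa_{2r+1}(F) + \tfrac{4}{(2r-1)!}\kappa_{2r}(F)$.

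The computations are entirely routine, so I do not anticipate a serious obstacle; the only points deserving care are the interchange of the Malliavin derivative with the infinite sum and the $L^2$-convergence of all series involved. Both are guaranteed by $A_f$ being Hilbert--Schmidt, i.e.\ $\sum_i c_{f,i}^2 < \infty$ and hence $c_{f,i}\to 0$, which makes $\sum_i c_{f,i}^{2r}(c_{f,i}-1)^2$ finite for every $r\ge 1$ and renders all Gamma operators well-defined elements of $L^2(\Omega)$. If one prefers to avoid the explicit eigenbasis manipulation, the same closed form for $\Gamma_r(F)$ can instead be quoted from the contraction representation recorded in the appendix, but the eigenvalue route above keeps the variance estimate transparent.
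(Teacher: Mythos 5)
Your proof is correct, and its skeleton matches the paper's: obtain a closed form for $\CenteredGamma_r(F)$ as an element of the second Wiener chaos, compute the variance of the telescoped difference, and translate power sums of eigenvalues into cumulants via \eqref{eq:SecondWienerChaosCumulantFormulaEigenvalues}. The difference lies in how the closed form is obtained and how the variance is evaluated. The paper quotes the contraction representation $\CenteredGamma_r(F) = 2^r I_2\big(f \contIterated{1}{r+1} f\big)$ from \cite{a-p-p} (equation (24) there, noted to follow by induction on $r$) and then applies the Wiener--It\^o isometry \eqref{eq:IsometryProperty}, writing the variance as $2^{2r+1}\Tr\big(A_f^{2r+2} - 2A_f^{2r+1} + A_f^{2r}\big)$. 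You instead re-derive the same representation in diagonalized form, $\CenteredGamma_r(F) = 2^r\sum_i c_{f,i}^{r+1}(N_i^2-1)$, by induction on $r$ directly from the recursion \eqref{eq:GammOperatorDefinition}, and evaluate the variance using independence of the variables $N_i^2-1$ (each with variance $2$) rather than the isometry. The two representations are identical, since $f \contIterated{1}{r+1} f = \sum_i c_{f,i}^{r+1}\, e_i \otimes e_i$ in the eigenbasis and $I_2(e_i \otimes e_i) = N_i^2 - 1$, so the computations are term-by-term equivalent. What your route buys is self-containedness: it needs only the chain rule, the identity $-DL^{-1}F = \tfrac12 DF$ on a fixed chaos, and elementary probability, with no appeal to \cite{a-p-p}. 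The price is the technical point you correctly flag -- interchanging $D$ with the infinite series -- which is justified by closability of $D$ together with the $L^2$-convergence of the sums and of their derivatives, both ensured by $\sum_i c_{f,i}^2 < \infty$; the paper's coordinate-free argument avoids this issue entirely by staying at the level of kernels.
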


\begin{proof}
	From \cite{a-p-p} equation (24), which follows by induction on $r$, we have the representation 
	\begin{equation}\label{eq:CentredGammaInTermsOfContraction}
	 \CenteredGamma_r(F) = 2^r I_2 \Big(f \contIterated{1}{r+1} f \Big).
	 \end{equation}
	Using the isometry property \eqref{eq:IsometryProperty}, we obtain
	\begin{align*}
	\Var \Big( \Gamma_r(F) - & 2 \Gamma_{r-1}(F) \Big) = 2^{2r+1} \, \norm{ f \contIterated{1}{r+1} f - f \contIterated{1}{r} f }_{\HH^{\otimes 2}}^{2}  \\
	& = 2^{2r+1} \Big( \sprod{f, f \contIterated{1}{2r+1} f}_{\HH^{\otimes 2}} - 2 \, \sprod{f, f \contIterated{1}{2r} f}_{\HH^{\otimes 2}} + \sprod{f, f \contIterated{1}{2r-1} f}_{\HH^{\otimes 2}} \Big)  \\
	& = 2^{2r+1} \Tr \Big( A_f^{2r+2} - 2 \, A_f^{2r+1} + A_f^{2r} \Big).
	\end{align*}
	The result now follows with \eqref{eq:SecondWienerChaosCumulantFormulaEigenvalues}.
\end{proof}

\begin{assum} \label{assum:OrderedEigenvalues}
	Since the order of the eigenvalues does not influence the distribution of the corresponding random variable, it will be handy to order them by descending absolute value. If any eigenvalue occurs with both positive and negative sign, we take the positive value first to make the representation unique. Hence for any element of the second Wiener chaos $F=I_2(f)$,
	we have a canonical representation
	\[ F {} \stackrel{\mathcal{D}}{=} {} \sum_{i=1}^{\infty} c_{f,i} \left( N_i^2 - 1 \right), \]
	where $\abs{c_{f,1}} \geq \abs{c_{f,2}} \ldots$ and if $ \abs{c_{f,i}} = \abs{c_{f,i+1}} $ for some $i \in \N$, then $ c_{f,i} \geq c_{f,i+1}$.
\end{assum}

\subsection{Motivating Examples}\label{sec:examples}

Let $\nu >0$. Assume that $\{F_n\}_{n \ge 1}$ is a sequence of random elements in the second Wiener chaos such that $\E(F^2_n)=2 \nu$ for all $n \ge 1$. The main Theorem \ref{thm:MainMalliavinSteinBound} reads that there exists a general constant $C$, such that

\begin{align*}
	d_{2}(F_n,G(\nu))  \leq_C \, \Bigg\{  & \Var\left( \Gamma_1 (F_n) - 2 F_n\right) \\
	& +  \sqrt{ \Var \left( \Gamma_2(F_n) - 2 \Gamma_1 (F_n) \right)} \times \sqrt{  \Var\left( \Gamma_1 (F_n) - 2 F_n\right)} \\
	& \hspace{-2em} + \sqrt{\Var\left( \Gamma_3 (F_n) - 2 \Gamma_2(F_n) \right)  }+ \abs{\kappa_3(F_n)  - \kappa_3(G(\nu))} + \abs{\kappa_4(F_n) - \kappa_4(G(\nu)) } \Bigg\}.
\end{align*}
	
As a consequence, in order for the square root in the upper bound in \eqref{eq:non-optimal-bound} to be removed, it is sufficient to verify the following statement. \textit{There exists a constant $C$ (independent of $n$, but may possibly depending on the sequence $\{F_n\}_{n \ge 1}$), such that the following variance-estimates hold:}

\begin{align}
\Var \left( \Gamma_2(F_n) - 2 \Gamma_1 (F_n) \right) &\le_C  \Var\left( \Gamma_1 (F_n) - 2 F_n\right), \label{variance-estimate-1}\\
\Var\left( \Gamma_3 (F_n) - 2 \Gamma_2(F_n) \right) & \le_C  \Var \, ^2 \left( \Gamma_1 (F_n) - 2 F_n \right) \label{variance-estimate-2}
\end{align}
Our major aim in the present section is to show that
\begin{itemize}
\item[\textit{(i)}] The variance-estimate \eqref{variance-estimate-1} is \textit{universal} in the sense that it holds for any random variable $F$ in the second Wiener chaos having second moment $\E(F^2)=2 \nu$. In particular it is not a matter of the fact whether the sequence $F_n$ converges in distribution towards a centered Gamma target $G(\nu)$.
\item[\textit{(ii)}] The second variance-estimate \eqref{variance-estimate-2} has a completely different flavor, and that occasionally holds too, meaning that it can be seen as a \textit{Gamma characterization} estimate. By this we mean that the central assumption that the sequence $F_n$ converges in distribution towards the Gamma target distribution $G(\nu)$ is  heavily used to establish the estimate. 
\end{itemize}

In order to classify the convergence rate of a sequence, we introduce the following notation: When $(a_n)_{n\geq 1}$ and $(b_n)_{n\geq1}$ are two non-negative real number sequences, we write $a_n \approx_C b_n$ if $\lim_{n \to \infty} \frac{a_n}{b_n} = C$, for some constant $C>0$.


\begin{ex}\label{ex:toy-2}  Let $\alpha_n, \beta_n $ be two sequences of positive real numbers converging to zero as $n \to \infty$ and assume that $(1-\alpha_n)^2 + \beta^2_n = 1$ for all $n \ge 1$. Consider the following sequence in the second Wiener chaos 
\begin{align*}
F_n &=  c_{1,n} (N^2_1-1) + c_{2,n}(N^2_2-1) := (1 -\alpha_n)(N^2_1-1)    -  \beta_n (N^2_2-1)\\
&  \stackrel{\mathcal{D}}{\to} G(1), \quad \text{as } \, n \to \infty.
\end{align*}
Note that the second moment assumption $\E(F^2_n)= 2(1-\alpha_n)^2 + 2\beta^2_n = 2$ implies that $\beta_n \approx_C \sqrt{\alpha_n}$. Hence, after some straightforward computations, we arrive in the asymptotic relations
\begin{align*}
\Var\left( \Gamma_3(F_n) - 2 \Gamma_2(F_n)\right)& \approx_C (\alpha^2_n - \alpha_n)^2,\\
\Var\left( \Gamma_1(F_n) - 2 F_n \right) & \approx_C (\alpha_n - \sqrt{\alpha_n})^2.
\end{align*}
Also \vspace{-1em}
\begin{align*}
\Var  \left( \Gamma_2 (F) - 2 \Gamma_1 (F) \right) & = 2^5 \sum_{i=1}^{2} c^2_{i,n} (c^2_{i,n} - c_{i,n})^2\\
& \le 4 \Big( 2^3 \sum_{i=1}^{2} (c^2_{i,n} - c_{i,n})^2 \Big) = 4 \Var \left( \Gamma_1(F) - 2 F \right).
\end{align*}
Therefore, for some constant $C$ (independent of $n$), both estimates \eqref{variance-estimate-1},\eqref{variance-estimate-2} take place. Therefore, our main theorem \ref{thm:MainMalliavinSteinBound} yields that 
\[ 
d_2 (F_n, G(1)) \le_C \max \Big\{ \abs[\Big]{ \kappa_3 (F_n) - \kappa_3 (G(2)) }, \abs[\Big]{ \kappa_4 (F_n) - \kappa_4 (G(2)) } \Big\}. \]

\end{ex}

\begin{ex}\label{ex:toy-3}
In this example, instead, we consider the following sequence 
\begin{equation*}
\begin{split}
F_n & =  c_{1,n} (N^2_1-1) + c_{2,n}(N^2_2-1) := (1 -\alpha_n)(N^2_1-1)    + \beta_n (N^2_2-1) \\
& \stackrel{\text{law}}{\to} G(1), \quad \text{as } \, n \to \infty.
\end{split}
\end{equation*}
Similar computations as in Example \ref{ex:toy-2} yield that estimate \eqref{variance-estimate-1} is in order.  It is noteworthy that, as an alternative to the second estimate \eqref{variance-estimate-2}, the estimate 
\[ 
\Var\left( \Gamma_3(F_n) - 2 \Gamma_2(F_n)\right) \le_C \Big( \kappa_4 (F_n) - 6 \kappa_3(F_n) \Big)^2
\]
is also valid, which is enough for our purposes. Note that for the target random variable $G(1)$, we have $\kappa_4(G(1)) - 6 \kappa_3(G(1))=0$. Later on in Section \ref{sec:trace-class}, we will study this phenomenon in detail. Once again the square root in \eqref{eq:non-optimal-bound} can be improved. 

\end{ex}

\subsection{Iterated Gamma Operators: Variance Estimates}

\subsubsection{Variance Estimate: $\Var\left( \Gamma_2(F_n) - 2 \Gamma_1 (F_n) \right) \le_C \Var\left( \Gamma_1 (F_n) - 2 F_n\right)$}

We start with variance--estimate \eqref{variance-estimate-1}. We make use of a recent discovery in \cite{a-p-p} that the second Wiener chaos is stable under the Gamma operators, meaning that for any element $F$ in the second Wiener chaos, the resulting random variable $\Gamma_r (F)$ remains inside the second Wiener chaos up to a constant for any $r \ge 0$. 
\begin{lem}\label{lem:r+1<r}
Let $\nu >0$, and $F= I_2 (f)$ in the second Wiener chaos such that $\E[F^2]=2\nu$. Then, for every $r\ge 1$, with constant $C=4\nu$, we have 
\begin{equation}\label{eq:1}
\Var \left( \Gamma_{r+1} (F) - 2 \Gamma_r (F) \right) \le_C \Var \left( \Gamma_r (F) - 2 \Gamma_{r-1}(F) \right).
\end{equation}
In particular 
\[ 
\Var \left( \Gamma_{2} (F) - 2 \Gamma_1 (F) \right) \le (4\nu) \, \Var \left( \Gamma_1 (F) - 2 F \right).
\]
Also, for every $r \ge 1$, and with constant $ C=(4 \nu)^r $, we have the following variance-estimate
\begin{equation}\label{eq:2}
\Var \left( \Gamma_{r+1}(F) - 2 \Gamma_{r} (F) \right) \le_C \Var \left( \Gamma_1 (F) - 2 F \right).
\end{equation}
\end{lem}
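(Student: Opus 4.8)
The plan is to reduce everything to the closed-form eigenvalue expression already recorded in Lemma \ref{lem:Var(Gamma_r-2Gamma_r-1)} and then exploit a single uniform bound on the eigenvalues coming from the second-moment normalization. First I would note that, since $\Gamma_r(F) - 2\Gamma_{r-1}(F)$ and its centered counterpart $\CenteredGamma_r(F) - 2\CenteredGamma_{r-1}(F)$ differ only by a deterministic constant, the variances coincide, and Lemma \ref{lem:Var(Gamma_r-2Gamma_r-1)} gives the exact formula
\[ \Var\big(\Gamma_r(F) - 2\Gamma_{r-1}(F)\big) = 2^{2r+1}\sum_{i=1}^{\infty} c_{f,i}^{2r}(c_{f,i}-1)^2 \]
for every $r \ge 1$. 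Applying this with $r$ replaced by $r+1$ rewrites the left-hand side of \eqref{eq:1} as $2^{2r+3}\sum_{i} c_{f,i}^{2r+2}(c_{f,i}-1)^2$, so the whole estimate \eqref{eq:1} becomes the elementary inequality
\[ 2^{2r+3}\sum_{i=1}^{\infty} c_{f,i}^{2r+2}(c_{f,i}-1)^2 \;\le\; 4\nu\cdot 2^{2r+1}\sum_{i=1}^{\infty} c_{f,i}^{2r}(c_{f,i}-1)^2, \]
that is, after cancelling the common factor $2^{2r+3}$, the claim $\sum_{i} c_{f,i}^{2r+2}(c_{f,i}-1)^2 \le \nu\sum_{i} c_{f,i}^{2r}(c_{f,i}-1)^2$.

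The crucial step is the observation that the second-moment constraint forces each eigenvalue to be small. Indeed, \eqref{eq:SecondWienerChaosCumulantFormulaEigenvalues} with $p=2$ gives $2\nu = \E[F^2] = \kappa_2(F) = 2\sum_{i} c_{f,i}^2$, whence $\sum_{i} c_{f,i}^2 = \nu$ and in particular $c_{f,i}^2 \le \nu$ for every $i$. Writing $c_{f,i}^{2r+2}(c_{f,i}-1)^2 = c_{f,i}^2\cdot c_{f,i}^{2r}(c_{f,i}-1)^2$ and bounding the leading factor by $\nu$ term by term yields exactly the summed inequality above. This proves \eqref{eq:1} with constant $C=4\nu$, and the ``in particular'' assertion is the special case $r=1$, where $\Gamma_0(F)=F$.

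For the second estimate \eqref{eq:2} I would simply iterate \eqref{eq:1}. Applying it successively with indices $r, r-1, \ldots, 1$ telescopes into
\[ \Var\big(\Gamma_{r+1}(F) - 2\Gamma_r(F)\big) \le (4\nu)\,\Var\big(\Gamma_r(F) - 2\Gamma_{r-1}(F)\big) \le \cdots \le (4\nu)^r\,\Var\big(\Gamma_1(F) - 2F\big), \]
which is precisely \eqref{eq:2} with $C=(4\nu)^r$.

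I do not anticipate any genuine obstacle. Once the exact eigenvalue representation of Lemma \ref{lem:Var(Gamma_r-2Gamma_r-1)} is in hand, the only real content is the uniform bound $c_{f,i}^2 \le \nu$ extracted from the normalization $\sum_{i} c_{f,i}^2 = \nu$, after which the estimate follows by a term-by-term comparison rather than any delicate summation or cancellation argument. The mild points worth double-checking are the bookkeeping of the powers of $2$ (the extra factor $4$ that combines with $\nu$ to produce the constant $4\nu$) and the fact that centering leaves the variances unchanged; both are routine.
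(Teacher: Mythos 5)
Your proof is correct and follows essentially the same route as the paper: both start from the representation $\Var\left( \Gamma_{r+1}(F) - 2\Gamma_r(F) \right) = 2^{2r+3}\Tr\left( A_f^2 (A_f^{r+1} - A_f^{r})^2 \right)$ (your eigenvalue sum is exactly this trace written in the eigenbasis of $A_f$), factor out the quadratic piece, bound it by $\Tr(A_f^2) = \nu$, and obtain \eqref{eq:2} by iterating \eqref{eq:1}. The only difference is in how the key bound is justified: the paper invokes the trace inequality $\Tr(AB) \le \Tr(A)\,\Tr(B)$ for non-negative operators from \cite{Liu}, whereas your termwise estimate $c_{f,i}^2 \le \sum_j c_{f,j}^2 = \nu$ proves this special (simultaneously diagonalizable) case elementarily, avoiding the external citation.
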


\begin{proof}
Let's first prove estimate \eqref{eq:1}. Then estimate \eqref{eq:2} could be proven by iteration using similar arguments. 
%
Let $r \ge 1$. Denote by $A_f$ the associated Hilbert-Schmidt operator. As in the proof of Lemma \ref{lem:Var(Gamma_r-2Gamma_r-1)}, the variance of the random quantity $\Gamma_{r+1}(F) - 2 \Gamma_r (F)$ can be rewritten as 
\begin{align*}
\Var \left( \Gamma_{r+1} (F) - 2 \Gamma_r (F) \right) & = 2^{2r+3} \Tr \left(   (A^{r+2}_f  - A^{r+1}_f )^2 \right)\\
& =  2^{2r+3} \Tr \left(   A^2_f (A^{r+1}_f  - A^{r}_f )^2 \right)\\
& \le  2^{2r+3} \Tr (A^2_f)  \times \Tr \left(   (A^{r+1}_f  - A^{r}_f )^2 \right)\\
&= 2 ( 2  \Tr (A^2_f) ) \times  \Tr \left(   (A^{r+1}_f  - A^{r}_f )^2 \right)\\
&= 2 \kappa_2(F)  \times \Tr \left(   (A^{r+1}_f  - A^{r}_f )^2 \right) \\
&= 4 \nu \, \Var \left( \Gamma_{r} (F) - 2 \Gamma_{r-1} (F) \right),
\end{align*}
where in the third step, we have used the following trace inequality for non-negative operators (see \cite{Liu}),
\[ \Tr (AB) \le \Tr(A) \, \Tr(B) \qquad \text{for} \quad  A ,B \ge 0. \]
\end{proof}

\begin{rem}\label{rem:Gamma-2-cumulant}  A direct consequence of Lemma \ref{lem:r+1<r} is, that for a random element $F$ in the second Wiener chaos with $\Var \left( \Gamma_1 (F) - 2 F \right)=0$ (and therefore $F = G (\nu)$ in distribution), we necessarily obtain for $ r \ge 2$,
\begin{equation}\label{eq:3}
\begin{split}
0 &= \Var \left( \Gamma_{r+1}(F) - 2 \Gamma_{r} (F) \right) \\
&= \frac{1}{(2r+3)!} \kappa_{2r+4}(F) -  \frac{4}{(2r+2)!} \kappa_{2r+3}(F) +  \frac{4}{(2r+1)!} \kappa_{2r+2}(F).
\end{split}
\end{equation} 
Later on in Section \ref{sec:gamma-charac}, we will show that astonishingly the converse is also true. Precisely, for the random element $F$ in the second Wiener chaos with $\E(F^2)=2\nu$, the sole assumption $ \Var \left( \Gamma_{r+1}(F) - 2 \Gamma_{r} (F) \right)=0$ for \textbf{some} $r \ge 2$, implies that $F$ necessarily is Gamma distributed.  
\end{rem}


\subsubsection{Variance Estimate: $\Var \left( \Gamma_3 (F_n) - 2 \Gamma_2(F_n) \right)  \le_C  \Var^2 \left( \Gamma_1 (F_n) - 2 F_n \right)$}\label{sec:2variance-estimate}

We begin with the following important observation, namely that a sequence in the second Wiener chaos can only converge to a centered chi-squared distribution $\chi^2$, not to any other centered Gamma distribution.

\begin{Prop} \label{prop:OnlyIntegerNuPossible}
	Let $\nu>0$, and let $( F_n = I_2 (f_n) )_{n \geq 1}$ be a sequence of random variables in the second Wiener chaos with fixed variance $\E[F_n^2]=2 \nu$ for all $n \geq 1$. 
	Denote by $c_{j,n}$ the $j$-th eigenvalue of the Hilbert-Schmidt operator $A_{f_n}$ associated with $F_n$. Without loss of generality, assume that $ \abs{c_{1,n}} \geq \abs{c_{2,n}} \geq \ldots$ (see Assumption \ref{assum:OrderedEigenvalues}). Then $F_n$ converges in distribution to $G(\nu) \sim CenteredGamma(\nu)$ if and only if
	\begin{enumerate}
		\item[(a)] $\nu$ is an integer, and
		\item[(b)] $c_{j,n} \stackrel{n \to \infty}{\longrightarrow} 1$ for $j=1,\ldots, \nu$ \quad and \quad $c_{j,n} \stackrel{n \to \infty}{\longrightarrow} 0$ for $j> \nu$.
		\end{enumerate}
\end{Prop}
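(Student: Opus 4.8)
The plan is to translate the statement entirely into the language of the eigenvalue power sums $S_p^{(n)} := \sum_{i=1}^{\infty} c_{i,n}^p = \Tr(A_{f_n}^p)$ and to exploit the cumulant formula of Proposition \ref{Prop:2choas-properties}. The starting point is the standard fact (hypercontractivity, see \cite{n-pe-1}) that on a fixed Wiener chaos weak convergence is equivalent to convergence of all cumulants: since $\E[F_n^2]=2\nu$ is fixed, all moments of $F_n$ are uniformly bounded, so weak convergence upgrades to convergence of every moment, hence of every cumulant; conversely, $G(\nu)$ is moment-determinate (a Gamma law satisfies Carleman's condition), so convergence of all cumulants to those of $G(\nu)$ forces weak convergence. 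By \eqref{eq:SecondWienerChaosCumulantFormulaEigenvalues} and \eqref{eq:CenteredGammaCumulants}, the condition $\kappa_p(F_n)\to\kappa_p(G(\nu))$ for all $p\geq 2$ is equivalent to $S_p^{(n)}\to\nu$ for every integer $p\geq 2$. Thus the whole proposition reduces to showing that (a) and (b) hold if and only if $S_p^{(n)}\to\nu$ for all $p\geq 2$.

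For the direction (a),(b) $\Rightarrow$ convergence I would argue directly with $S_p^{(n)}$. The leading $\nu$ terms converge to $\nu\cdot 1=\nu$, so it remains to kill the tail. Since $\sum_{i\leq\nu}c_{i,n}^2\to\nu$ while $\sum_i c_{i,n}^2=\nu$ is fixed, the tail mass $\sum_{i>\nu}c_{i,n}^2\to 0$; combined with $|c_{\nu+1,n}|\to 0$ and the bound $\sum_{i>\nu}|c_{i,n}|^p\leq |c_{\nu+1,n}|^{p-2}\sum_{i>\nu}c_{i,n}^2$ for $p\geq 2$, the tail of $S_p^{(n)}$ vanishes, giving $S_p^{(n)}\to\nu$ for all $p\geq 2$.

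The substantial direction is convergence $\Rightarrow$ (a),(b). Here I would pass to a subsequence and, by diagonal extraction (all eigenvalues lie in $[-\sqrt\nu,\sqrt\nu]$ because $\sum_i c_{i,n}^2=\nu$), assume $c_{i,n}\to\ell_i$ for every $i$, with $|\ell_1|\geq|\ell_2|\geq\cdots$. A first observation is $\limsup_n|c_{1,n}|\leq 1$: since $|c_{1,n}|^{2p}\leq S_{2p}^{(n)}\to\nu$, one gets $\limsup_n|c_{1,n}|\leq\nu^{1/2p}\to 1$ as $p\to\infty$, so $|\ell_i|\leq 1$ for all $i$. The crucial technical ingredient is a \emph{uniform} tail bound: for any $\delta>0$ at most $\nu/\delta^2$ eigenvalues can satisfy $|c_{i,n}|\geq\delta$, so there is $K_0=K_0(\delta)$ with $|c_{i,n}|<\delta$ for all $i>K_0$ and all $n$; hence $\sum_{i>K_0}|c_{i,n}|^{p}\leq\delta^{p-2}\nu$ uniformly in $n$. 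For $p\geq 3$ this is small when $\delta$ is small, which justifies interchanging the limit in $n$ with the infinite sum and yields $\sum_i\ell_i^{p}=\lim_n S_p^{(n)}=\nu$ for every integer $p\geq 3$ (the odd sums converging because the odd cumulants of $G(\nu)$ are positive).

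It then remains to read off the spectral structure. Using $\sum_i\ell_i^{2p}=\nu$ and letting $p\to\infty$ (dominated convergence, $\ell_i^{2p}\leq\ell_i^4$ summable) gives $\#\{i:|\ell_i|=1\}=\nu$, so $\nu$ is a positive integer, which is (a). Feeding this back, $\sum_{|\ell_i|<1}\ell_i^{2p}=0$ for every $p\geq 2$ forces every $\ell_i$ with $|\ell_i|<1$ to vanish, so the $\ell_i$ consist of exactly $\nu$ values in $\{-1,+1\}$ followed by zeros. Finally the odd sums $\sum_i\ell_i^{2p+1}=\#\{\ell_i=1\}-\#\{\ell_i=-1\}=\nu$, together with $\#\{\ell_i=1\}+\#\{\ell_i=-1\}=\nu$, force $\#\{\ell_i=-1\}=0$, so $\ell_1=\cdots=\ell_\nu=1$ and $\ell_i=0$ for $i>\nu$. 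Since this deterministic limit is the same along every convergent subsequence, the full sequence converges, giving (b). I expect the main obstacle to be precisely the interchange of the limit in $n$ with the infinite sum: the uniform tail bound above (obtained by counting eigenvalues above a threshold, where $\sum_i c_{i,n}^2=\nu$ is essential) is what makes this rigorous, and the passage $p\to\infty$ is what converts the analytic information $\sum_i\ell_i^p=\nu$ into the discrete integer structure of the spectrum.
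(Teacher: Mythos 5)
Your proof is correct, but it takes a genuinely different route from the paper's in the hard direction. To show that convergence in distribution forces (a) and (b), the paper does not work with subsequential limits of the eigenvalues at all: it invokes the Gamma fourth moment theorem of \cite{n-p-noncentral} (Theorem 1.2, condition (v)), which gives $\Var\big(\Gamma_1(F_n)-2F_n\big)\to 0$, and then Lemma \ref{lem:Var(Gamma_r-2Gamma_r-1)} converts this into $\sum_i c_{i,n}^2(c_{i,n}-1)^2\to 0$; this immediately tells them that each sequence $(c_{j,n})_n$ is bounded and can only accumulate at $0$ or $1$ (in particular $-1$ is excluded for free), after which the ordering and the fourth power sums yield $c_{1,n}\to 1$, and an induction that subtracts $1$ at each step from $\lim_n\sum_i c_{i,n}^p=\nu$ shows that $\nu\in\N$ and settles (b). You instead stay entirely inside elementary analysis of the power sums: diagonal extraction plus the uniform tail bound $\#\{i:\abs{c_{i,n}}\geq\delta\}\leq\nu/\delta^2$ justify passing the limit inside $\sum_i c_{i,n}^p$ for $p\geq 3$ (correctly avoiding $p=2$, where $\ell^2$-mass can escape into the tail), the $p\to\infty$ limit of the even sums forces the limiting spectrum to consist of exactly $\nu$ values of modulus one together with zeros, and the odd sums exclude the value $-1$ --- precisely the step the paper gets for free from the variance criterion. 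What each approach buys: the paper's argument is shorter and leans on the Malliavin--Stein machinery already central to the paper, while yours is self-contained (no noncentral fourth moment theorem needed) and makes transparent exactly which combinatorial information the cumulants carry. The converse direction also differs: the paper realizes $G(\nu)=\sum_{i=1}^{\nu}(N_i^2-1)$ on the same probability space and checks $F_n\to G(\nu)$ in $L^2$ directly, whereas you go back through cumulants and therefore need moment-determinacy of the Gamma law (Carleman's condition); both are valid, the paper's being slightly more economical since it bypasses determinacy altogether.
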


\begin{proof}
	Assume that $F_n \stackrel{\mathcal{D}}{\to} G(\nu)$ for some $\nu > 0$. Since this implies convergence of all cumulants, \eqref{eq:SecondWienerChaosCumulantFormulaEigenvalues} and \eqref{eq:CenteredGammaCumulants} imply that
	\begin{align}
	 & 2^{p-1} (p-1)! \sum_{i=1}^{\infty} c_{i,n}^p \to 2^{p-1} (p-1)! \, \nu  \quad \text{as } n \to \infty \notag \\
	 \Leftrightarrow \quad & \sum_{i=1}^{\infty} c_{i,n}^p  \to \nu \quad \text{as } n \to \infty, \label{eq:ProofOnlyIntegerNuPossible1}
	 \end{align}
	 for all $p \geq 2$. Furthermore,  $F_n \stackrel{\mathcal{D}}{\to} G(\nu)$  implies $\Var( \Gamma_1(F_n) - 2 F_n) \to 0$, see e.g. \cite{n-p-noncentral}, Theorem 1.2 condition (v). Hence by Lemma \ref{lem:Var(Gamma_r-2Gamma_r-1)}, we have, for all $j \in \N$, that
	 \[ c_{j,n}^2 (c_{j,n} - 1)^2 \leq  \sum_{i=1}^{\infty} c_{i,n}^2 ( c_{i,n} - 1)^2 = \frac{1}{2^3} \Var\big( \Gamma_1(F_n) - 2 F_n \big) \to 0 \quad \text{as } n \to \infty. \]
	 From this we deduce that for all $j$, the sequence $(c_{j,n})_{n\geq 1}$ is bounded and can only have accumulation points $0$ and $1$.
	 
	 First, consider $(c_{1,n})_{n\geq 1}$. Assume there exists a subsequence $(c_{1,n_k})_{k \geq 1}$ that converges to $0$. Then using the ordering of the eigenvalues, we get
	 \[ \sum_{i=1}^{\infty} c_{i,n_k}^4 \leq c_{1,n_k}^2 \sum_{i=1}^{\infty} c_{i,n_k}^2 \stackrel{k \to \infty}{\longrightarrow} 0 \times \nu = 0, \]
	 which contradicts \eqref{eq:ProofOnlyIntegerNuPossible1}. Hence $\lim_{n \to \infty} c_{1,n} = 1$. What remains is
	 \begin{equation}
	 \lim_{n \to \infty} \sum_{i=2}^{\infty} c_{i,n}^p = \nu - 1 \qquad \text{for all } p \geq 2. \label{eq:ProofOnlyIntegerNuPossible2}
	 \end{equation}
	 From here, we continue inductively, each time subtracting $1$ from the right hand side of \eqref{eq:ProofOnlyIntegerNuPossible2}. Since the right hand side cannot bet negative, we conclude that $\nu$ must be an integer and that $\lim_{n\to\infty}c_{1,n} = \ldots = \lim_{n\to\infty}c_{\nu,n} = 1$.
	 
	 Now we are left with
	 \begin{equation}
	 \lim_{n \to \infty} \sum_{i=\nu +1}^{\infty} c_{i,n}^p = 0 \qquad \text{for all } p \geq 2, \label{eq:ProofOnlyIntegerNuPossible3}
	 \end{equation}
	 from which we deduce for all $j \geq \nu+1$ that
	 \[ c_{j,n}^2 \leq \sum_{i=\nu +1}^{\infty} c_{i,n}^2 \stackrel{n \to \infty}{\longrightarrow} 0, \]
	 and thus $\lim_{n\to\infty} c_{j,n}=0$.\\
	 \\
	 Conversely, suppose that $\nu$ is an integer and $(b)$ holds. The target has the representation $G(\nu) = \sum_{i=1}^{n} (N_i^2-1)$. Therefore, the $L^2$-distance between $F_n$ and $G(\nu)$ is given by
	 \[ \E\big[ \big(F_n - G(\nu) \big)^2\big] = 2 \sum_{i=1}^{\nu} (c_{i,n}-1)^2 + 2 \sum_{i=\nu+1}^{\infty} c_{i,n}^2. \]
	 The first term goes to zero since there are only finitely many summands. For the second term, the assumption $\E[F_n^2] = 2 \nu$ yields
	 \[ \sum_{i=1}^{\infty} c_{i,n}^2 = \nu \quad \text{for all }n \quad \implies \quad \lim\limits_{n \to \infty} \sum_{i=\nu+1}^{\infty} c_{i,n}^2 = 0. \]
	 Hence $F_n \to G(\nu)$ in $L^2$.
	 \end{proof}

	\begin{rem}
	\begin{enumerate}
		\item[\textit{(i)}]
		For the implication $F_n \stackrel{\mathcal{D}}{\to} G(\nu) \implies (a)$ and $(b)$, we can drop the assumption $\Var(F_n)=2\nu$, but not the ordering of the eigenvalues. Take for example the sequence
		\[ F_n = \sum_{i=1}^{\infty} c_{i,n} ( N_i^2 -1 ) =  N_n^2-1, \]
		i.e. $c_{i,n} = \ind{i=n}$. Then obviously $F_n \stackrel{\mathcal{D}}{=} G(1)$ for all $n$, but $\lim_{n\to\infty} c_{i,n} = 0$ for all $i \in \N$.
		\item[\textit{(ii)}] For the converse, $(a)$ and $(b) \implies F_n \stackrel{\mathcal{D}}{\to} G(\nu)$, we do not need to order the eigenvalues (descending absolute value), but cannot drop the assumption $\Var(F_n)=2\nu$. Take for example the sequence
		\[ F_n = (N_1^2 - 1) + \sum_{i=2}^{n+1} \frac{1}{\sqrt{2n}} (N_i^2 -1) := (N_1^2 - 1) + S_n. \]
		The sum $S_n$ is independent of $N_1^2 -1$, and thus by the central limit theorem
		$F_n \stackrel{\mathcal{D}}{\to} G(1) + N$, where $G(1) \sim CenteredGamma(1)$ and $N$ is an independent $\mathscr{N}(0,1)$ variable. Here $c_{1,n} \to 1$ and $c_{i,n} \to 0$ for $i \geq 2$, so $\nu=1$. However, for all $n$, we have $\Var(F_n)=3 \neq 2\nu$.
	\end{enumerate}
	\end{rem}

Because of Proposition \ref{prop:OnlyIntegerNuPossible}, from now on, we will only focus on cases where $\nu$ is an integer. Also recall that on second Wiener chaos $\Gamma_j = \Gamma_{alt,j}$ for all $j$, so we will always use the notation without the additional subscript. Unlike the variance estimate \eqref{variance-estimate-1}, in order to keep transparency in analyzing the validity of the second variance estimate \eqref{variance-estimate-2}, we discuss the following different cases separately.

\begin{Prop}\label{lem:finite-eigenvalue-1}(The case of finitely many eigenvalues)
Let $\nu >0$ and $M \geq 2$. For each $n \geq 1$, let $c_{1,n}, \ldots, c_{M,n}$ be ordered by descending absolute value (see Assumption \ref{assum:OrderedEigenvalues}) and assume that $\sum_{i=1}^{M}c^2_{i,n} = \nu$. Furthermore, assume that as $n \to \infty$,
\begin{align*}
F_n :&= c_{1,n} (N^2_1 -1) + c_{2,n} (N^2_2 -1)+ \cdots + c_{M,n}(N^2_M -1) \\
& \stackrel{\mathcal{D}}{\to} G (\nu),
\end{align*}
where $G(\nu)$ is a centered Gamma random variable, and $\{N_i \}_{1 \le i \le M}$ is a family of independent $\mathscr{N}(0,1)$ random variables. Then $\nu \in \{ 1,2,\cdots,M\}$ is an integer, and therefore the target $G(\nu)$ is a centered $\chi^2$ random variable with $\nu$ degrees of freedom. Set 
\begin{equation}\label{eq:rates}
\omega(n):=  \max \big\{ \abs{ 1 - c_{i,n} } \, : \, i \in \{1,\ldots, \nu \} \big\}, \quad \text{ and} \quad \vartheta(n):= \sum_{i=\nu+1}^{M} c^2_{i,n}.
\end{equation}
Then $F_n \stackrel{L^2}{\to} G(\nu)$, as $n \to \infty$, and the rate of the convergence in the square mean is $\max\{ \omega(n)^2, \vartheta(n)\}$. Furthermore,
\begin{itemize}
\item[(a)] the asymptotic assertion
\[
\Var\left( \Gamma_3(F_n) - 2 \Gamma_2(F_n)\right)  \approx_C \Var^{\, 2} \left( \Gamma_1(F_n) - 2 F_n \right)
\]
holds if and only if $\vartheta(n) \approx_C \omega(n)$. Also the latter asymptotic relation verifies whenever the degree of freedom $\nu =1$.
\item[(b)] the asymptotic assertion
\[ \Var\left( \Gamma_3(F_n) - 2 \Gamma_2(F_n)\right)  \approx_C \Var \left( \Gamma_1(F_n) - 2 F_n \right)
\]
holds if and only if, the degree of freedom $\nu = M$ is the largest possible value, \textbf{or} $\vartheta(n) \approx_C \omega(n)^2$, \textbf{or} $\vartheta(n)=o \left( \omega(n)^2\right)$.
\end{itemize}
\end{Prop}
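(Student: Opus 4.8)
The plan is to push everything through the eigenvalue representation and then carry out an order‑of‑magnitude analysis controlled by the normalization constraint. By Proposition \ref{prop:OnlyIntegerNuPossible}, the hypothesis $F_n \stackrel{\mathcal{D}}{\to} G(\nu)$ already forces $\nu \in \{1,\dots,M\}$ to be an integer with $c_{i,n}\to 1$ for $i\le\nu$ and $c_{i,n}\to 0$ for $\nu<i\le M$; the $L^2$‑convergence and its rate follow from the distance formula used in the converse part of that proposition, namely $\E[(F_n-G(\nu))^2]=2\sum_{i=1}^{\nu}(c_{i,n}-1)^2+2\vartheta(n)$, whose right‑hand side is of order $\max\{\omega(n)^2,\vartheta(n)\}$ by \eqref{eq:rates}. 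For the variance terms I would apply Lemma \ref{lem:Var(Gamma_r-2Gamma_r-1)} with $r=1,3$ and write, with the shorthand $V_1(n):=\Var(\Gamma_1(F_n)-2F_n)$ and $V_3(n):=\Var(\Gamma_3(F_n)-2\Gamma_2(F_n))$,
\[
V_1(n)=2^3\sum_{i=1}^{M}c_{i,n}^2(c_{i,n}-1)^2,\qquad V_3(n)=2^7\sum_{i=1}^{M}c_{i,n}^6(c_{i,n}-1)^2 .
\]
Splitting each sum into a top block ($i\le\nu$) and a bottom block ($i>\nu$), I would extract the leading orders: on the top block $c_{i,n}^{2r}\to1$, so both top blocks are asymptotic to $S(n):=\sum_{i=1}^{\nu}(1-c_{i,n})^2$, which obeys $\omega(n)^2\le S(n)\le \nu\,\omega(n)^2$ and hence $S(n)\asymp\omega(n)^2$; on the bottom block $(c_{i,n}-1)^2\to1$, so the $V_1$‑bottom block is asymptotic to $\vartheta(n)$, while the $V_3$‑bottom block is asymptotic to $\sum_{i>\nu}c_{i,n}^6$. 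Since there are at most $M-\nu$ nonnegative terms $c_{i,n}^2$ with sum $\vartheta(n)$, the power‑mean bounds $\vartheta(n)^3/(M-\nu)^2\le\sum_{i>\nu}c_{i,n}^6\le\vartheta(n)^3$ pin this block to order $\vartheta(n)^3$. Collecting, up to constants,
\[
V_1(n)\asymp\omega(n)^2+\vartheta(n),\qquad V_3(n)\asymp\omega(n)^2+\vartheta(n)^3,\qquad V_1(n)^2\asymp\omega(n)^4+\vartheta(n)^2 .
\]
The final structural input is the normalization $\sum_{i=1}^{M}c_{i,n}^2=\nu$, which reads $\sum_{i=1}^{\nu}(1-c_{i,n}^2)=\vartheta(n)$; as $1+c_{i,n}\to2$ this gives $\sum_{i=1}^{\nu}(1-c_{i,n})=\tfrac12\vartheta(n)(1+o(1))$. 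For $\nu=1$ this is a single term, so $1-c_{1,n}=\tfrac12\vartheta(n)(1+o(1))$, whence $\vartheta(n)\approx_{2}\omega(n)$; this proves the last sentence of (a).

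For part (a) I would compare $V_3(n)$ with $V_1(n)^2$ through the displayed orders. If $\vartheta(n)\approx_{C}\omega(n)$, then $\vartheta(n)\gg\omega(n)^2$, so $V_1(n)\asymp\vartheta(n)$ and $V_1(n)^2\asymp\vartheta(n)^2\asymp\omega(n)^2$, while $V_3(n)\asymp\omega(n)^2$ because $\vartheta(n)^3=o(\omega(n)^2)$; both quantities are therefore of order $\omega(n)^2$. Conversely, a trichotomy on the relative sizes of $\omega(n)$ and $\vartheta(n)$ excludes every other regime: if $\omega(n)=o(\vartheta(n))$ then $V_3(n)\asymp\omega(n)^2+\vartheta(n)^3=o(\vartheta(n)^2)=o(V_1(n)^2)$, and if $\vartheta(n)=o(\omega(n))$ then $V_3(n)\asymp\omega(n)^2$ strictly dominates $V_1(n)^2\asymp\max\{\omega(n)^4,\vartheta(n)^2\}$. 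In either case the ratio $V_3(n)/V_1(n)^2$ cannot converge to a positive constant, so only $\vartheta(n)\asymp\omega(n)$ survives.

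For part (b) the same orders yield the three listed conditions directly. If $\nu=M$ then $\vartheta(n)=0$ and $V_1(n)\asymp V_3(n)\asymp\omega(n)^2$; if $\vartheta(n)\approx_{C}\omega(n)^2$ or $\vartheta(n)=o(\omega(n)^2)$, then $\vartheta(n)$ is at most of order $\omega(n)^2$, so $V_1(n)\asymp\omega(n)^2$, and since $\vartheta(n)^3=o(\omega(n)^2)$ also $V_3(n)\asymp\omega(n)^2$, giving $V_3(n)\asymp V_1(n)$. Conversely, assume $\nu<M$ and $\vartheta(n)\gg\omega(n)^2$; then $V_1(n)\asymp\vartheta(n)$ whereas $V_3(n)\asymp\omega(n)^2+\vartheta(n)^3=o(\vartheta(n))$, so $V_3(n)=o(V_1(n))$ and the ratio again fails to converge to a positive constant. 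This rules out the only remaining possibility and closes the equivalence.

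I expect the genuine difficulty to lie not in the orders but in honoring the \emph{exact} limits demanded by the symbol $\approx_C$. For $\nu>1$ the ratio $S(n)/\omega(n)^2$ need not converge on its own, and in the surviving regime of (a) one has $V_3(n)/V_1(n)^2\sim 2\,S(n)/\vartheta(n)^2$, so a limit exists only once the individual top‑block deviations are controlled. The mechanism I would use is precisely the normalization identity $\sum_{i\le\nu}(1-c_{i,n})=\tfrac12\vartheta(n)(1+o(1))$: combined with the ordering of the eigenvalues and the hypothesis $\vartheta(n)\approx_C\omega(n)$, it upgrades the single ratio convergence to convergence of the ratios $(1-c_{i,n})/\omega(n)$, thereby forcing $S(n)/\omega(n)^2$ to converge and the $r$‑dependence to enter only through the explicit factors $2^{2r+1}$. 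Making this upgrade rigorous, together with the power‑mean estimate that fixes the bottom block of $V_3(n)$ at the exact order $\vartheta(n)^3$, is the heart of the argument.
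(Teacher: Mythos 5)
Your argument, up to and including the order-of-magnitude analysis, is essentially the paper's own proof: Proposition \ref{prop:OnlyIntegerNuPossible} gives the integrality of $\nu$ and the limits of the eigenvalues; Lemma \ref{lem:Var(Gamma_r-2Gamma_r-1)} gives the two variances; each sum is split at $i=\nu$; and the normalization identity $\sum_{i\le\nu}(1-c_{i,n})^2+\vartheta(n)=2\sum_{i\le\nu}(1-c_{i,n})$ yields the $L^2$ rate, the case $\nu=1$ of (a), and the bound $\vartheta(n)\le 2\nu\,\omega(n)$. The differences are minor: you control $\sum_{i>\nu}c_{i,n}^6$ by a power-mean bound where the paper uses $\sum_{i>\nu}c_{i,n}^6\le\vartheta(n)^3\le(2\nu\omega(n))^3=o(\omega(n)^2)$; your trichotomy carries the case $\omega(n)=o(\vartheta(n))$, which the normalization identity in fact excludes (so $\Var(\Gamma_3(F_n)-2\Gamma_2(F_n))\asymp\omega(n)^2$ holds unconditionally); and you write out part (b), which the paper dismisses as ``similar''. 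All of this is correct provided $\approx_C$ is read, as in the paper's own proof, as a two-sided bound by positive constants.

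The genuine problem is your final paragraph, the step you yourself call the heart of the argument. The claimed mechanism --- that Assumption \ref{assum:OrderedEigenvalues}, the normalization identity and $\vartheta(n)\approx_C\omega(n)$ force each ratio $(1-c_{i,n})/\omega(n)$, hence $S(n)/\omega(n)^2$, to converge --- is false for $\nu\ge3$. Take $\nu=3$, $M=4$, $\omega_n\downarrow0$, and alternate between
\[
(c_{1,n},c_{2,n},c_{3,n})=\Big(1+\tfrac{\omega_n}{2},\,1-\omega_n,\,1-\omega_n\Big)
\qquad\text{and}\qquad
(c_{1,n},c_{2,n},c_{3,n})=\Big(1,\,1-\tfrac{\omega_n}{2},\,1-\omega_n\Big),
\]
with $c_{4,n}>0$ fixed by $\sum_{i=1}^{4}c_{i,n}^2=3$, so that $F_n\stackrel{\mathcal{D}}{\to}G(3)$ by Proposition \ref{prop:OnlyIntegerNuPossible} and, for $n$ large, the ordering of Assumption \ref{assum:OrderedEigenvalues} holds. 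In both configurations $\sum_{i\le3}(1-c_{i,n})=\tfrac32\omega_n$, hence $\vartheta(n)=3\omega_n+O(\omega_n^2)$ and the hypothesis $\vartheta(n)\approx_{3}\omega(n)$ of part (a) holds along the whole sequence. Yet $S(n)/\omega_n^2$ oscillates between $\tfrac94$ and $\tfrac54$, while $\Var(\Gamma_1(F_n)-2F_n)=24\,\omega_n(1+o(1))$ in both cases and $\Var(\Gamma_3(F_n)-2\Gamma_2(F_n))=2^{7}S(n)(1+o(1))$, so the ratio $\Var(\Gamma_3(F_n)-2\Gamma_2(F_n))/\Var^{\,2}(\Gamma_1(F_n)-2F_n)$ oscillates between $\tfrac12$ and $\tfrac{5}{18}$ and has no limit. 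Thus the upgrade you propose is not just unproven but unattainable: under the strict reading of $\approx_C$ (the ratio converges), the ``if'' direction of (a) is itself false for $\nu\ge3$. The only consistent reading of the Proposition is the loose one, under which your order analysis already constitutes a complete proof; the closing paragraph should be deleted rather than made rigorous.
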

\begin{proof}
The first part follows immediately from Proposition \ref{prop:OnlyIntegerNuPossible}. Now, the second moment assumption $\E(F^2_n) = 2 \sum_{1 \le i \le M} c^2_{i,n} =2 \nu$ implies that
\begin{align*}
\sum_{i=1}^{\nu} (1 - c_{i,n})^2 &= \sum_{i =1}^{\nu} ( 1 + c^2_{i,n} - 2 c_{i,n}) = \sum_{i=1}^{\nu} ( 1 + c^2_{i,n}) + \sum_{i = \nu+1}^{M} c^2_{i,n} - 2 \sum_{i=1}^{\nu} c_{i,n} - \sum_{i =\nu+1}^{M} c^2_{i,n} \\
&= 2 \sum_{i=1}^{\nu} (1 - c_{i,n}) - \sum_{i=\nu+1}^{M} c^2_{i,n}.
\end{align*}
Therefore,
\begin{equation}\label{eq:L2-rate}
\E(F_n - G(\nu))^2 =  \sum_{i=1}^{\nu} (1 - c_{i,n})^2 + \sum_{i=\nu+1}^{M} c^2_{i,n} = 2 \sum_{i=1}^{\nu} (1 - c_{i,n}) \to 0.
\end{equation}
\textit{Proof of $(a)$} : when $\nu =M$, then for all $1 \le i \le M$, the coefficients $c_{i,n} \to 1$, as $n \to \infty$. Hence,
\begin{align*}
\Var\left( \Gamma_3(F_n) - 2 \Gamma_2(F_n)\right)& = 2^7\sum_{i=1}^{M} c^6_{i,n} (1 - c_{i,n})^2  \approx \Big\{ \max \{ \abs{ 1 - c_{i,n} } \, : \, i=1,\cdots,M\} \Big \}^2, \\
  \Var \left( \Gamma_1(F_n) - 2 F_n \right) &=   2^3 \sum_{i=1}^{M} c^2_{i,n} (1- c_{i,n})^2 \approx  \Big\{ \max \{ \abs{ 1 - c_{i,n} } \, : \, i=1,\cdots,M\} \Big \}^2. 
\end{align*} 
Therefore
\[
\Var\left( \Gamma_3(F_n) - 2 \Gamma_2(F_n)\right) \approx_C  \Var \left( \Gamma_1(F_n) - 2 F_n \right).
\]
Hence, we assume that $\nu < M$. Then $c_{1,n}, \ldots, c_{\nu,n} \to 1$, and $c_{\nu+1,n}, \ldots, c_{M,n} \to 0$ as $n \to \infty$. Since $\{ \nu+1, \ldots, M \}$ is finite, and $\vartheta(n) \le 2 \nu \, \omega(n)$, we have $ \sum_{i = \nu+1}^{M} c^6_{i,n} = o \left( \omega(n) ^2 \right)$. Hence


\begin{align*}
& \Var\left( \Gamma_3(F_n) - 2 \Gamma_2(F_n)\right) = 2^7\sum_{i=1}^{M} c^6_{i,n} (1 - c_{i,n})^2 \\
& \qquad = 2^7 \Big\{ \sum_{i =1}^{\nu} c^6_{i,n} (1 - c_{i,n})^2 + \sum_{i=\nu+1}^{M} c^6_{i,n} (1 - c_{i,n})^2 \Big\}  \approx_C \max \Big\{ \omega(n)^2, o \big( \omega(n) ^2 \big) \Big\} \approx_C \omega(n)^2.
\end{align*}
Also,
\begin{align}
  \Var \left( \Gamma_1(F_n) - 2 F_n \right) & =  2^3 \Big\{ \sum_{i=1}^{\nu} c^2_{i,n} (1- c_{i,n})^2  + \sum_{i=\nu+1}^M c^2_{i,n} (1- c_{i,n})^2 \Big \} \notag \\
  &  \approx_C \max \{  \omega(n)^2, \vartheta(n)\}. \label{eq:1-0}
\end{align}
Hence,
\begin{equation*}
  \Var\left( \Gamma_3(F_n) - 2 \Gamma_2(F_n)\right) \approx_C \Var^2 \left( \Gamma_1(F_n) - 2 F_n\right) \quad \text{if and only if} \quad \vartheta(n) \approx_C \omega(n).
\end{equation*}
In addition, assumption $\omega(n)  \approx_C  \vartheta(n)$ is equivalent to 
\begin{equation}\label{eq:nu=1}
\E(F_n - G(\nu))^2 = 2 \sum_{i =1}^{\nu} (1 - c_{i,n}) \approx_C \omega(n)  \approx_C \sqrt{\sum_{i =1}^{\nu} (1 - c_{i,n})^2}.
\end{equation}
Therefore, when the degree of freedom $\nu=1$, \eqref{eq:nu=1} occurs. \textit{Proof of $(b)$}: It can be discussed in a similar way.
\end{proof}

\begin{rem}\label{rem:1} In the light of relation \eqref{eq:L2-rate}, always $\vartheta(n) \le 2 \nu \, \omega(n)$. Taking this into account together with 
\begin{align*}
& \Var\left( \Gamma_2(F_n) - 2 \Gamma_1(F_n)\right) = 2^5\sum_{i=1}^{M} c^4_{i,n} (1 - c_{i,n})^2 \\
& \qquad = 2^5 \Big\{ \sum_{i =1}^{\nu} c^4_{i,n} (1 - c_{i,n})^2 + \sum_{i =\nu+1}^{M} c^4_{i,n} (1 - c_{i,n})^2 \Big\} \approx_C \max \{  \omega(n)^2, \omega(n)^2\} \approx_C \omega(n)^2,
\end{align*}
one can conclude that the asymptotic estimate
\[ \Var\left( \Gamma_2(F_n) - 2 \Gamma_1(F_n)\right) \approx_C  \Var\left( \Gamma_3(F_n) - 2 \Gamma_2(F_n)\right)
\]
takes place as soon as the sequence $F_n$ in the second Wiener chaos converges in distribution towards the centered Gamma distribution $G(\nu)$ without any further assumptions. 
\end{rem}

\begin{ex}
The following simple example shows that, in general, many things can happen. Let $\delta \in [0,1]$, and  consider the sequence $F_n= \sum_{i=1}^{5} c_{i,n} (N^2_i  -1)$ in the second Wiener chaos, where the coefficients $c_{i,n}$ are given as   
 \begin{align*}
 c_{1,n} &=\sqrt{1+\frac{1}{n}}, \quad c_{2,n}=\sqrt{1- \frac{1}{n}},\\
 c_{3,n} & = \sqrt{1 - \frac{1}{n^{1+\delta}}}, \quad c_{4,n}= \sqrt{\frac{1}{2 n^{1+\delta}}}, \quad \text{and} \quad c_{5,n}=\sqrt{\frac{1}{2 n^ {1+\delta}}}.
\end{align*}
Then,
\[
c^2_{4,n} \approx_C \frac{1}{n^{1+\delta}} \approx_C  c^2_{5,n} \approx_C \vartheta(n) \approx_C \omega(n)^{1+\delta}.
\]
Therefore, when $\delta =0$, then our favorite estimate 
\[
\Var\left( \Gamma_3(F_n) - 2 \Gamma_2(F_n)\right) \approx_C   \Var\, ^2 \left( \Gamma_1(F_n) - 2 F_n \right)
\]
takes place, and when $\delta=1$, then 
\[
\Var\left( \Gamma_2(F_n) - 2 \Gamma_1(F_n)\right) \approx_C \Var\left( \Gamma_3(F_n) - 2 \Gamma_2(F_n)\right) \approx_C   \Var \left( \Gamma_1(F_n) - 2 F_n \right).$$ In general $$  \Var\left( \Gamma_3(F_n) - 2 \Gamma_2(F_n)\right) \approx_C \Big( \Var \left( \Gamma_1(F_n) - 2 F_n \right) \Big)^{\frac{2}{1+\delta}}.
\]
One can also consider more involved intermediate rates such as $\vartheta(n) \approx_C \omega(n)^{1+\delta} \, \log^{\gamma} (\omega(n))$ for some $\delta, \gamma \ge 0$.

\end{ex}

\begin{cor}\label{cor:variance123-1}
Let $M \ge 2$ and $\nu >0$. Consider a sequence $(F_n)_{n \ge 1}$ of random elements in the second Wiener chaos such that $\E(F^2_n)=2\nu$ for all $ n\ge 1$, possessing the representation 
\[ F_n = \sum_{1 \le i \le M} c_{i,n} (N^2_i -1), \qquad n \ge 1,
\]
with $\abs{c_{1,n}} \geq \ldots \geq \abs{c_{M,n}}$ for each $n \geq 1$ (see Assumption \ref{assum:OrderedEigenvalues}).
Also, we assume that $F_n$ converges in distribution towards a centered Gamma distribution with parameter $\nu >0$. Then there exist two constants $0< C_1 <C_2$ (may depend on sequence $F_n$, but independent of $n$), such that for all $n \ge 1$,
\begin{itemize}
\item[(i)] if $\nu=1$, \textbf{or} $\vartheta(n) \approx_C \omega(n)$, then $$C_1  \Var \, ^2 \left( \Gamma_1(F_n) - 2 F_n \right) \le \Var\left( \Gamma_3(F_n) - 2 \Gamma_2(F_n)\right) \le C_2  \Var \,^2 \left( \Gamma_1(F_n) - 2 F_n \right).$$
\item[(ii)] if $\nu =M$, \textbf{or} $\vartheta(n) \approx_C \omega(n)^2$,  \textbf{or} $\vartheta(n)=o \left( \omega(n)^2\right)$, then  $$C_1  \Var\left( \Gamma_1(F_n) - 2 F_n \right) \le \Var\left( \Gamma_3(F_n) - 2 \Gamma_2(F_n)\right) \le C_2   \Var \left( \Gamma_1(F_n) - 2 F_n \right).$$
\end{itemize}
\end{cor}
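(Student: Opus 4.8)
The plan is to read this corollary off directly from Proposition \ref{lem:finite-eigenvalue-1}, whose content is exactly the two asymptotic equivalences appearing in (i) and (ii); the only additional work is to upgrade the asymptotic relation $\approx_C$ (a statement about the limit of a ratio) into the uniform two-sided bounds valid for \emph{every} $n \ge 1$. Throughout I keep in mind the explicit eigenvalue expressions furnished by Lemma \ref{lem:Var(Gamma_r-2Gamma_r-1)} (with $r=1$ and $r=3$),
\begin{align*}
\Var\left( \Gamma_1(F_n) - 2 F_n \right) &= 2^3 \sum_{i=1}^{M} c_{i,n}^2 (1 - c_{i,n})^2, \\
\Var\left( \Gamma_3(F_n) - 2 \Gamma_2(F_n) \right) &= 2^7 \sum_{i=1}^{M} c_{i,n}^6 (1 - c_{i,n})^2,
\end{align*}
which in particular show that both variances are strictly positive unless every eigenvalue lies in $\{0,1\}$.

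First I would dispose of the degenerate indices. If $c_{i,n} \in \{0,1\}$ for all $i$, then by the ordering in Assumption \ref{assum:OrderedEigenvalues} together with $\sum_i c_{i,n}^2 = \nu$ we have $F_n \stackrel{\mathcal{D}}{=} G(\nu)$ exactly, so both $\Var(\Gamma_1(F_n) - 2F_n)$ and $\Var(\Gamma_3(F_n) - 2\Gamma_2(F_n))$ vanish and the asserted chain of inequalities holds trivially for any choice of $C_1 < C_2$. Hence we may restrict attention to those $n$ for which at least one eigenvalue lies strictly between $0$ and $1$; for these, both variances above are strictly positive and the relevant ratios are well-defined.

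For part (i), the hypothesis ($\nu = 1$, or $\vartheta(n) \approx_C \omega(n)$) is, by Proposition \ref{lem:finite-eigenvalue-1}(a), exactly the condition under which
\[ \lim_{n \to \infty} \frac{\Var\left( \Gamma_3(F_n) - 2 \Gamma_2(F_n)\right)}{\Var^2\left( \Gamma_1(F_n) - 2 F_n\right)} = C \]
for some constant $C > 0$. A positive sequence converging to a positive limit is eventually trapped, say, in $[C/2, 2C]$, so there is $N$ with $C/2 \le \Var(\Gamma_3(F_n) - 2\Gamma_2(F_n))/\Var^2(\Gamma_1(F_n) - 2F_n) \le 2C$ for all $n \ge N$. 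For the finitely many non-degenerate indices $n < N$ the ratio is a positive finite number, so taking $C_1$ to be the minimum of $C/2$ and these finitely many ratios, and $C_2$ the maximum of $2C$ and those ratios, pins the ratio between $C_1$ and $C_2$ for every $n$; clearing the (positive) denominator gives (i). Part (ii) is proved verbatim, invoking Proposition \ref{lem:finite-eigenvalue-1}(b) to obtain the limit of $\Var(\Gamma_3(F_n) - 2\Gamma_2(F_n))/\Var(\Gamma_1(F_n) - 2F_n)$ and carrying $\Var(\Gamma_1(F_n) - 2F_n)$, rather than its square, through the same argument.

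There is no genuinely hard step: the analytic substance, namely establishing that these ratios converge to a positive constant under the stated eigenvalue conditions, is already carried out in Proposition \ref{lem:finite-eigenvalue-1}. The only point demanding a moment's care is the well-definedness of the ratios, i.e. strict positivity of the denominators, which is precisely why the degenerate case $F_n \stackrel{\mathcal{D}}{=} G(\nu)$ must be separated off at the outset; once this is done, the passage from a convergent ratio to uniform constants $C_1 < C_2$ is the standard argument of absorbing finitely many exceptional terms into the constants.
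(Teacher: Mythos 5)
Your proposal is correct and follows essentially the same route as the paper: the paper gives no separate proof of Corollary \ref{cor:variance123-1}, intending it to be read off from Proposition \ref{lem:finite-eigenvalue-1} exactly as you do, by upgrading the asymptotic relations $\approx_C$ to uniform two-sided bounds via absorbing the finitely many initial indices into the constants. Your explicit treatment of the degenerate indices (where all $c_{i,n}\in\{0,1\}$, so that both variances vanish simultaneously by the formulas of Lemma \ref{lem:Var(Gamma_r-2Gamma_r-1)} and the inequalities hold trivially) is a detail the paper leaves implicit, and it is handled correctly.
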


\begin{rem}(Case $\nu=M$)  Let $M \ge 2$ and $\nu >0$. Assume that $\E(F^2_n) = 2 \nu$ for all $n \ge 1$ where 
$$F_n=\sum_{1 \le i \le M} c_{i,n} (N^2_i -1) \stackrel{\mathcal{D}}{\to} G(\nu=M), \quad \text{ as } n \to \infty.$$  The second moment assumption implies that $  \sum_{i=1}^{M} (1-c_{i,n})^2 = 2  \sum_{i=1}^{M}  (1 - c_{i,n}) \ge 0$. On the other hand (up to some constants), 
\begin{equation}\label{eq:3-cumulant}
\begin{split}
\abs[\Big]{ \kappa_3 (F_n) - \kappa_3 (G(\nu)) } &= \abs[\Big]{ \sum_{i=1}^{M}( c^3_{i,n} -1) } = \abs[\Big]{ \sum_{i=1}^{M} (c_{i,n} -1) (c^2_{i,n} + c_{i,n} +1) } \\
& = \abs[\Big]{ \sum_{i=1}^{M} (c_{i,n} -1) \left( (c_{i,n} -1)^2 + 3 c_{i,n} \right) } \\
&= \abs[\Big]{ \sum_{i=1}^{M} (c_{i,n} -1) \Big( (c_{i,n} -1)^2 + 3 (c_{i,n} -1) +3 \Big) } \\
&= \abs[\Big]{ 3 \sum_{i=1}^{M} (c_{i,n} -1) + 3   \sum_{i=1}^{M} (c_{i,n} -1)^2 +  \sum_{i=1}^{M}  (c_{i,n} -1)^3 } \\
&=  \abs[\Big]{ 9  \sum_{i=1}^{M} (c_{i,n} -1) + \sum_{i=1}^{M}  (c_{i,n} -1)^3 } \\
& \approx_C \abs[\Big]{  \sum_{i=1}^{M} (c_{i,n} -1) },
\end{split}
\end{equation}
which in general is less than the rate $ \max \Big\{ \abs{ 1 - c_{i,n} } \, : \, i=1,\cdots,M \Big\}$. Similarly, 
\begin{equation}\label{eq:4-cumulant}
\abs[\Big]{ \kappa_4 (F_n) - \kappa_4 (G(\nu)) } \approx_C   \abs[\Big]{ \sum_{i=1}^{M} (c_{i,n} -1) }.
\end{equation} 
Hence, the following remarks of independent interest are in order.
\begin{itemize}
\item[\textit{(i)}] Observations \eqref{eq:3-cumulant} and \eqref{eq:4-cumulant} reveal that either of the sole moment convergences $\E(F^3_n) \to \E(G(\nu)^3)$ or $\E(F^4_n) \to \E(G(\nu)^4)$ implies convergence in distribution of the sequence $F_n$ towards the target distribution $G(\nu)$. In other words, the third moment criterion implies the fourth moment criterion and vice versa. Such phenomenon has been already observed in the case of normal approximation, see \cite{viens}. 
\item[\textit{(ii)}] It is worth mentioning that if $M=\nu \ge 5$, then \cite[Theorem 1.2]{rola} yields that 
in fact, in the stronger distance $d_{TV}$, there exists a constant $C$ (may depends on sequence $F_n$, but independent of $n$), such that for all $n\ge 1$,
\begin{equation*}
d_{TV} (F_n, G(\nu)) \le_C  \max \Big\{ \abs{ 1 - c_{i,n} } \, : \, i=1,\cdots,M \Big\}.
\end{equation*}
Hence,
\begin{equation}\label{eq:TV-rate}
d_{TV} (F_n, G(\nu)) \le_C \sqrt{ \max \Big\{ \abs[\Big]{ \kappa_3 (F_n) - \kappa_3 (G(\nu)) }, \abs[\Big]{ \kappa_4 (F_n) - \kappa_4 (G(\nu)) } \Big\}}.
\end{equation}
 We conjecture that in this setting, the estimate \eqref{eq:TV-rate} continues to hold when removing the assumption $\nu \ge 5$. See also Proposition \ref{Prop:concrete-example} in Section \ref{sec:concrete-example}., and Conjecture \ref{con:<>}.
\end{itemize}

\end{rem}

\begin{Prop}\label{lem:infinite-eigenvalues} (The case of ultimately infinitely many non-zero eigenvalues)
Let $\nu >0$, and $( M_n )_{ n \ge 1} \subset \N \cup \{+ \infty\}$ be a sequence such that $M_n \uparrow \infty$. Consider a sequence $(F_n)_{n \ge 1}$ of random elements in the second Wiener chaos such that $\E[F^2_n]=2\nu$ for all $ n\ge 1$, possessing the following representation 
\[
F_n = \sum_{1 \le i \le M_n} c_{i,n} (N^2_i -1), \qquad n \ge 1,
\]
where for each $n \geq 1$, it holds that $\abs{c_{1,n}} \geq \ldots \geq \abs{c_{M_n,n}}$ (see Assumption \ref{assum:OrderedEigenvalues}).
Also, we assume that $F_n$ converges in distribution towards a centered Gamma distribution $G(\nu)$ with parameter $\nu >0$. Then, the asymptotic relation 
\[
\Var\left( \Gamma_3(F_n) - 2 \Gamma_2(F_n)\right)  \approx_C \Var^{\,2} \left( \Gamma_1(F_n) - 2 F_n \right)
\]
holds if and only if $\vartheta(n) \approx_C \omega(n)$. Consequently, whenever the aforementioned asymptotic condition takes place, there exist two constants $0< C_1 <C_2$ (may depend on sequence $F_n$, but independent of $n$) such that for all $n \ge 1$,
\[
C_1  \Var \, ^2 \left( \Gamma_1(F_n) - 2 F_n \right) \le \Var\left( \Gamma_3(F_n) - 2 \Gamma_2(F_n)\right) \le C_2  \Var \,^2 \left( \Gamma_1(F_n) - 2 F_n \right).
\]
\end{Prop}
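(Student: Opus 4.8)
The plan is to reduce everything to explicit eigenvalue series, exactly as in the finite case of Proposition \ref{lem:finite-eigenvalue-1}, and then to verify that the only genuinely new feature---an infinite tail of small eigenvalues---contributes negligibly. First I would invoke Proposition \ref{prop:OnlyIntegerNuPossible}: since $F_n \stackrel{\mathcal{D}}{\to} G(\nu)$, the parameter $\nu$ must be a positive integer, and after ordering (Assumption \ref{assum:OrderedEigenvalues}) one has $c_{i,n} \to 1$ for $i \le \nu$ and $c_{i,n} \to 0$ for $i > \nu$; in particular $\vartheta(n) = \sum_{i>\nu} c_{i,n}^2 \to 0$. The second moment identity $\sum_{i\ge 1} c_{i,n}^2 = \nu$ yields, exactly as in \eqref{eq:L2-rate}, the relation $\sum_{i=1}^{\nu}(1-c_{i,n})^2 + \vartheta(n) = 2\sum_{i=1}^{\nu}(1-c_{i,n})$, from which I record the crucial a priori bound $\vartheta(n) \le 2\nu\,\omega(n)$.

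Next I would feed $r=1$ and $r=3$ into Lemma \ref{lem:Var(Gamma_r-2Gamma_r-1)}, giving $\Var(\Gamma_1(F_n)-2F_n) = 2^3 \sum_i c_{i,n}^2(1-c_{i,n})^2$ and $\Var(\Gamma_3(F_n)-2\Gamma_2(F_n)) = 2^7 \sum_i c_{i,n}^6(1-c_{i,n})^2$, and split each series at $i=\nu$. For the first variance the head $\sum_{i\le\nu} c_{i,n}^2(1-c_{i,n})^2 \approx_C \omega(n)^2$ (finitely many terms with $c_{i,n}^2 \to 1$), while the infinite tail is sandwiched, using $\abs{c_{i,n}} \le \abs{c_{\nu+1,n}}$ for $i>\nu$, as $(1-\abs{c_{\nu+1,n}})^2\,\vartheta(n) \le \sum_{i>\nu} c_{i,n}^2(1-c_{i,n})^2 \le (1+\abs{c_{\nu+1,n}})^2\,\vartheta(n)$, hence is $\approx_C \vartheta(n)$; so $\Var(\Gamma_1(F_n)-2F_n) \approx_C \max\{\omega(n)^2, \vartheta(n)\}$, precisely as in \eqref{eq:1-0}.

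The hard part will be controlling the infinite tail of the third order variance, where---unlike the finite case---I cannot simply say it has finitely many terms each tending to zero. The key estimate is to factor $c_{i,n}^6 = c_{i,n}^4\, c_{i,n}^2$ and use the ordering to bound $c_{i,n}^4 \le c_{\nu+1,n}^4$, so that $\sum_{i>\nu} c_{i,n}^6(1-c_{i,n})^2 \le c_{\nu+1,n}^4\,\vartheta(n)$. Since $c_{\nu+1,n}^2 \le \vartheta(n) \le 2\nu\,\omega(n)$ gives $c_{\nu+1,n}^4 \le 4\nu^2\,\omega(n)^2$, the tail is at most $8\nu^3\,\omega(n)^3 = o(\omega(n)^2)$. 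Together with the head $\sum_{i\le\nu} c_{i,n}^6(1-c_{i,n})^2 \approx_C \omega(n)^2$, this yields $\Var(\Gamma_3(F_n)-2\Gamma_2(F_n)) \approx_C \omega(n)^2$, the same conclusion reached in the finite setting but now justified for an infinite spectrum.

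Finally I would assemble the equivalence. Because $\omega(n) \to 0$, one has $\omega(n)^2 = o(\omega(n))$, so $\max\{\omega(n)^2, \vartheta(n)\} \approx_C \omega(n)$ exactly when $\vartheta(n) \approx_C \omega(n)$. Squaring the estimate for $\Var(\Gamma_1(F_n)-2F_n)$ and comparing with $\Var(\Gamma_3(F_n)-2\Gamma_2(F_n)) \approx_C \omega(n)^2$ then shows that $\Var(\Gamma_3(F_n)-2\Gamma_2(F_n)) \approx_C \Var^{\,2}(\Gamma_1(F_n)-2F_n)$ holds if and only if $\vartheta(n) \approx_C \omega(n)$. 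Under this condition both $\Var(\Gamma_1(F_n)-2F_n) \approx_C \omega(n)$ and $\Var(\Gamma_3(F_n)-2\Gamma_2(F_n)) \approx_C \omega(n)^2$, so the displayed two-sided bound with constants $0 < C_1 < C_2$ follows immediately.
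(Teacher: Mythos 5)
Your proof is correct, and its skeleton coincides with the paper's: invoke Proposition~\ref{prop:OnlyIntegerNuPossible} to get that $\nu$ is an integer together with the eigenvalue limits, use Lemma~\ref{lem:Var(Gamma_r-2Gamma_r-1)} to write both variances as eigenvalue series, split each series at $i=\nu$, show the infinite tail of the sixth-power sum is $o(\omega(n)^2)$, and close the equivalence via $\Var\left(\Gamma_1(F_n)-2F_n\right)\approx_C \max\{\omega(n)^2,\vartheta(n)\}$. Where you genuinely diverge is precisely the step that is new in the infinite-spectrum setting, the tail estimate. The paper exhausts $\{\nu+1,\nu+2,\ldots\}$ by nested finite sets $A_m$, observes that each finite section satisfies $x_{m,6}(n)=o(\omega(n)^2)$, and then passes to the limit in $m$ by appealing to the monotone convergence theorem. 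You instead use the ordering to write $c_{i,n}^6\le c_{\nu+1,n}^4\,c_{i,n}^2$ and the chain $c_{\nu+1,n}^2\le\vartheta(n)\le 2\nu\,\omega(n)$ to obtain the explicit bound $\sum_{i>\nu}c_{i,n}^6\le\vartheta(n)^3\le 8\nu^3\,\omega(n)^3$. Your route buys two things: it is quantitative (a power saving of order $\omega(n)^3$ rather than a bare $o(\omega(n)^2)$), and it avoids the delicate interchange of limits implicit in the paper's argument --- a little-$o$ statement holding for each fixed $m$ does not automatically survive the limit $m\to\infty$ without uniformity, so your direct bound is arguably the cleaner justification of the paper's claim. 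You are also more careful in re-deriving \eqref{eq:1-0} for infinitely many eigenvalues (your sandwich $(1\mp\abs{c_{\nu+1,n}})^2\,\vartheta(n)$ for the tail of the first variance), whereas the paper silently reuses the finite-$M$ computation from Proposition~\ref{lem:finite-eigenvalue-1}.

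One small repair: in your inequality $\sum_{i>\nu}c_{i,n}^6(1-c_{i,n})^2\le c_{\nu+1,n}^4\,\vartheta(n)$ you dropped the factor $(1-c_{i,n})^2$, which can exceed $1$ when $c_{i,n}<0$; the correct bound carries an extra factor $(1+\abs{c_{\nu+1,n}})^2\le 4$ for $n$ large, which is harmless for the $o(\omega(n)^2)$ conclusion but should be stated.
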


\begin{proof}
First note that since $M_n \uparrow \infty$, we have $M_n > \nu$ for large enough values of $n$. So without loss of generality, we assume $M_n = \infty$ for all $n\ge 1$. Using Proposition \ref{prop:OnlyIntegerNuPossible} we deduce that $\nu$ is an integer, and that as $n \to \infty$, $c_{1,n},\ldots, c_{\nu,n} \to 1$ and $c_{i,n} \to 0$ for all $i \geq \nu+1$.
Then relation \eqref{eq:1-0} yields that 
\[
\Var \left( \Gamma_1(F_n) - 2 F_n \right) \approx_C \omega(n)
\]
if and only if  $\vartheta(n) \approx_C \omega(n)$, where as before $\omega(n)= \max  \{ \abs{ 1 - c_{i,n} } : i = 1, \ldots, \nu \}$. Note that there are infinitely many coefficients tending to zero. We claim that 
\[
\sum_{i \geq \nu+1 }c^6_{i,n} = o (\omega(n)^2).
\]
To this end, take a nested sequence $A_1 \subseteq A_2 \subseteq \ldots \subseteq A_m \subseteq A_{m+1} \subseteq \ldots $ such that $A_m \to \{ \nu+1, \nu+2, \ldots \}$ as $m \to \infty$, and $\# A_m < \infty$ for all $m \ge 1$.  Define
\[
x_{m,r}(n):= \sum_{i \in A_m} c^r_{i,n}.
\]
Then for each $m \in \N$, the estimate $x_{m,2}(n) \le \sum_{i \geq \nu+1 }c^2_{i,n} = \vartheta(n) \le 2\nu \, \omega(n)$ holds.  So the above analysis, together with the fact that $\# A_m$ is finite for $m \ge 1$, tells us that
\[
x_{m,6}(n) = o(\omega(n)^2), \quad \forall \, m\ge 1.
\]
Now, taking into account that $x_{m,6} \to x_{\infty,6}(n):= \sum_{i \geq \nu +1 }c^6_{i,n} $, as $m \to \infty$, and each $x_{m,6}(n) = o(\omega(n)^2)$, a direct application of the monotone convergence theorem implies that $x_{\infty,6}= o(\omega(n)^2)$. Therefore,  
\begin{align*}
& \Var \big( \Gamma_3(F_n) - 2 \Gamma_2(F_n) \big) = 2^7\sum_{i=1}^{\infty} c^6_{i,n} (1 - c_{i,n})^2 \\
& \qquad = 2^7 \Big\{ \sum_{i=1}^{\nu} c^6_{i,n} (1 - c_{i,n})^2 + \sum_{i=\nu+1}^{\infty} c^6_{i,n} (1 - c_{i,n})^2 \Big\} \approx_C \max \{ \abs{ 1 - c_{i,n} }^2\, : \, i = 1,\ldots, \nu \} \approx_C \omega(n)^2.
\end{align*}
Hence the claim follows. 
\end{proof}

\subsection{An Optimal Theorem}\label{sec:optimal}
Now we are ready to state our main theorem providing an optimal rate of convergence in terms of the third and the fourth cumulants.  The following result  provides an analogous counterpart to the same phenomenon in the case of normal approximation on arbitrary Wiener chaos, see \cite[Theorem 1.2]{n-p-optimal}. 
\begin{Thm}\label{thm:optimal}
Let $\nu >0$. Assume that
\[
( F_n )_{n \ge 1}= \Big( \sum_{ i \ge 1} c_{i,n} (N^2_i -1) \Big)_{n \ge 1}
\]
is a sequence of elements in the second Wiener chaos such that $\abs{c_{1,n}} \geq \abs{c_{2,n}} \geq \ldots$ (see Assumption \ref{assum:OrderedEigenvalues}) and $\E(F^2_n)= 2 \sum_{ i \ge 1} c^2_{i,n} = 2\nu$ for all $n \ge 1$. Assume, in addition,  as $n \to \infty$, that 
\begin{equation}\label{eq:3-4-cumulants}
\Var \big( \Gamma_1(F_n) - 2 F_n \big) \to 0.
\end{equation}
Then $F_n$ converges in distribution towards a centered Gamma distribution $G(\nu)$ with parameter $\nu$. Furthermore, when $\vartheta(n) \approx_C \omega(n)$, where $\vartheta(n)$ and $\omega(n)$ are as in \eqref{eq:rates}, then there exist two constants $0 < C_1 < C_2$ (possibly depending on the sequence $F_n$, but independent of $n$) such that for all $n \ge 1$,
\begin{equation}\label{eq:optimal-finite-eigenvalue}
C_1 \,  \mathbf{M} (F_n) \le d_{2} (F_n, G(\nu)) \le C_2 \,  \mathbf{M} (F_n),
\end{equation}
where as before
\[
\mathbf{M}(F_n) :=  \max \Big\{ \abs[\Big]{ \kappa_3 (F_n) - \kappa_3 (G(\nu)) }, \abs[\Big]{ \kappa_4 (F_n) - \kappa_4 (G(\nu)) }  \Big\}.
\]
\end{Thm}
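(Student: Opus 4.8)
The plan is to combine the general Stein--Malliavin upper bound of Theorem \ref{thm:MainMalliavinSteinBound} for the upper estimate, the variance-control lemmas of this section to convert the Gamma-operator terms into cumulants, and a separate lower-bound argument exploiting the exact eigenvalue expansions to obtain matching constants. The convergence claim is the easy part: by Lemma \ref{lem:Var(Gamma_r-2Gamma_r-1)} the hypothesis \eqref{eq:3-4-cumulants} says $\sum_i c_{i,n}^2(c_{i,n}-1)^2 \to 0$, and together with the fixed-variance constraint $\sum_i c_{i,n}^2 = \nu$ this forces all but $\nu$ of the (ordered) eigenvalues to $0$ and the top $\nu$ of them to $1$; combined with Proposition \ref{prop:OnlyIntegerNuPossible} this already gives $F_n \stackrel{\mathcal{D}}{\to} G(\nu)$ with $\nu$ an integer. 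So the substance is the two-sided bound \eqref{eq:optimal-finite-eigenvalue}.

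\emph{Upper bound.} First I would feed the eigenvalue representation into Theorem \ref{thm:MainMalliavinSteinBound}. The first summand $\Var(\Gamma_1(F_n)-2F_n)$ and the mixed second summand are controlled by \eqref{variance-estimate-1} (Lemma \ref{lem:r+1<r}, which is universal), so the second summand is dominated by $C\,\Var(\Gamma_1(F_n)-2F_n)$. The delicate third summand $\sqrt{\Var(\Gamma_3(F_n)-2\Gamma_2(F_n))}$ is handled precisely by the hypothesis $\vartheta(n)\approx_C\omega(n)$: by Proposition \ref{lem:finite-eigenvalue-1}(a) (and Proposition \ref{lem:infinite-eigenvalues} in the infinite case) this hypothesis yields $\Var(\Gamma_3(F_n)-2\Gamma_2(F_n))\approx_C \Var^2(\Gamma_1(F_n)-2F_n)$, so its square root is $\approx_C \Var(\Gamma_1(F_n)-2F_n)$. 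Thus the whole bracket in \eqref{main-theoretical-estimate} collapses, up to constants, to
\[
\Var(\Gamma_1(F_n)-2F_n) + \abs{\kappa_3(F_n)-\kappa_3(G(\nu))} + \abs{\kappa_4(F_n)-\kappa_4(G(\nu))}.
\]
It then remains to absorb the variance term into the cumulant maximum, i.e.\ to show $\Var(\Gamma_1(F_n)-2F_n)\le_C \mathbf{M}(F_n)$; this is exactly the content of estimate \eqref{eq:Var(Gamma_1-2F)leqCumulants} quoted from \cite{InvPrinForHomSums}, giving the upper half of \eqref{eq:optimal-finite-eigenvalue}.

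\emph{Lower bound.} For $d_2(F_n,G(\nu))\ge C_1\,\mathbf{M}(F_n)$ I would choose, for each $n$, an explicit admissible test function $h\in\mathcal{H}_2$ and expand $\E[h(F_n)]-\E[h(G(\nu))]$ using the eigenvalue representation \eqref{eq:SecondWienerChaosEigenvalueRepresentation}, so that the leading contribution is a fixed nonzero multiple of the dominant cumulant difference. Concretely, since both cumulant differences are, up to constants, of the same order as $\sum_{i\le\nu}(1-c_{i,n})$ together with the lower-order tail $\vartheta(n)$ (cf.\ the computation in \eqref{eq:3-cumulant}--\eqref{eq:4-cumulant} and relation \eqref{eq:L2-rate}), a careful second-order Taylor expansion of a well-chosen smooth $h$ produces a term proportional to $\kappa_3$ or $\kappa_4$ that does not cancel and dominates the remainder. \textbf{The hard part} will be this lower bound: one must exhibit a \emph{single} test function (or family) whose $d_2$-pairing isolates the max of the two cumulant differences with a constant uniform in $n$, controlling the error terms via the smallness of $\omega(n),\vartheta(n)$ guaranteed by the convergence hypothesis. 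Here the regime split $\nu=1$ versus $\nu\ge 2$, and the genuine two-dimensional nature of $\mathbf{M}(F_n)$ (a max of two quantities), make the construction more subtle than in the normal case \cite{n-p-optimal}; I expect the cleanest route is to show $\kappa_3$ and $\kappa_4$ differences are asymptotically comparable (as the remark following Corollary \ref{cor:variance123-1} suggests) so that pinning down one of them suffices.
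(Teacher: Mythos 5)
Your convergence claim and upper bound follow the paper's route exactly: Theorem \ref{thm:MainMalliavinSteinBound} combined with Lemma \ref{lem:r+1<r}, Corollary \ref{cor:variance123-1} and Proposition \ref{lem:infinite-eigenvalues} under the hypothesis $\vartheta(n)\approx_C\omega(n)$, and then the estimate \eqref{eq:Var(Gamma_1-2F)leqCumulants} to absorb $\Var(\Gamma_1(F_n)-2F_n)$ into $\mathbf{M}(F_n)$. That half is fine.

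The genuine gap is the lower bound, which you flag as ``the hard part'' and then only sketch; the sketch as described would not go through. The obstruction is structural: no admissible test function in $\mathcal{H}_2$ pairs against $\mu_{F_n}-\mu_{G(\nu)}$ to produce a cumulant difference, because the natural candidates ($x^3$, $x^4$, or any polynomial) have unbounded derivatives and are excluded from $\mathcal{H}_2$, and any truncated or damped surrogate produces error terms involving higher moments of $F_n$ that are of the \emph{same} order as the quantity you are trying to isolate --- this is precisely why a direct Taylor-expansion attack fails and why \cite{n-p-optimal} and the present paper do not argue this way. The paper's proof of $C_1\,\mathbf{M}(F_n)\le d_2(F_n,G(\nu))$ avoids test-function constructions entirely: it exploits the fact that second-chaos variables have exponential moments, so the characteristic functions $\phi_{F_n}$ and $\phi_{G(\nu)}$ extend analytically to a complex strip; one shows $\abs{\phi_{F_n}(z)-\phi_{G(\nu)}(z)}\le_C d_2(F_n,G(\nu))$ on a fixed disk $D_R$, checks (via the explicit cumulant bounds \eqref{eq:lower-2}) that both characteristic functions are bounded away from zero there, takes logarithms, and then applies Cauchy's coefficient estimates to the analytic function $\log\phi_{F_n}-\log\phi_{G(\nu)}$, whose Taylor coefficients are exactly the cumulant differences. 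This yields $\abs{\kappa_r(F_n)-\kappa_r(G(\nu))}\le_C d_2(F_n,G(\nu))$ for all $r\ge 3$ simultaneously, with no need for your auxiliary claim that the third and fourth cumulant differences are asymptotically comparable (a claim the paper only verifies in the special regime $\nu=M$, in \eqref{eq:3-cumulant}--\eqref{eq:4-cumulant}). Without either this complex-analytic argument or a genuinely worked-out test-function construction with controlled remainders, the left inequality in \eqref{eq:optimal-finite-eigenvalue} remains unproven in your proposal.
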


\begin{proof}
The asymptotic relation \eqref{eq:3-4-cumulants} implies that $F_n$ converges in distribution towards a centered Gamma distribution $G(\nu)$, which is a well known fact, see for example \cite{n-p-noncentral} .\\
(\textit{upper bound}): This is a direct application of Theorem \ref{thm:MainMalliavinSteinBound}, Corollary \ref{cor:variance123-1}, and Proposition \ref{lem:infinite-eigenvalues}. (\textit{lower bound}): Fix a real number $\rho >0$ whose range of values will be determined later on. Taking into account the second moment assumptions, it is a classical result (see \cite[Chapter $7$]{lukas}) that the characteristic functions $\phi_{F_n}$ and $\phi_{G(\nu)}$ are analytic inside the strip $\Delta_\nu := \{ z \in \mathbb{C} :  \abs{ \operatorname{Im}z } < \frac{1}{2\sqrt{\nu}} \}$. Moreover, in the strip of regularity $\Delta_\nu$, they follow the integral representations
\[
\phi_{F_n}(z) = \int_\R e^{izx} \mu_{n}(dx) \quad \text{and} \quad \phi_{G(\nu)}(z) = \int_\R e^{izx} \mu_\nu (dx),
\]
where $\mu_n$ and $\mu_\nu$ stand for the probability measures of $F_n$ and $G(\nu)$ respectively. Recall that all elements in the second Wiener chaos have exponential moments, see \cite[Proposition $2.7.13$, item (iii)]{n-pe-1}. Denote by $\Omega_{\rho, \nu}$ the domain
\[\Omega_{\rho, \nu}: = \Big\{ z=t + i y \in \mathbb{C} \, : \, \abs{ \operatorname{Re}z } < \rho, \abs{ \operatorname{Im}z } < \min \{ (2\sqrt{\nu})^{-1}, e^{-1}\} \Big\}.
\]
Then for any $z \in \Omega_{\rho,\nu}$, together with a Fubini's argument, we have that
\begin{align*}
\abs[\Big]{ \phi_{F_n}(z) - \phi_{G(\nu)}(z) } & = \abs[\Big]{ \int_\R  e^{itx - y x} (\mu_n - \mu_\nu) (dx) } = \abs[\Big]{ \sum_{ k \ge 0} \frac{(-y)^k}{k!} \int_\R x^k e^{itx} (\mu_n - \mu_\nu)(dx) } \\
& \le \sum_{k\ge 0}\frac{e^{-k}}{k!} \abs[\Big]{ \phi^{(k)}_{F_n}(t)- \phi^{(k)}_{G(\nu)} (t) }  \le  \sum_{k\ge 0}\frac{e^{-k}}{k!} \rho^{k+1} d_2 (F_n, G(\nu)) \\
& = \rho \, e^{\rho e^{-1}} d_2 (F_n, G(\nu)).
\end{align*}
 Hence $ \abs{ \phi_{F_n}(z)  - \phi_{G(\nu)}(z) } \le_{C_{\rho}} d_2 (F_n, G(\nu))$ 
  for every $z \in \Omega_{\rho,\nu}$. Let $R>0$ such that the disk $D_R \subset \mathbb{C}$ with the origin as center and radius $R$ is contained in the domain $\Omega_{\rho,\nu}$ (note that $R$ depends only on $\nu$, since $\rho$ is a free parameter. For example, one can choose $ \min \{ (2\sqrt{\nu})^{-1}, e^{-1}\} < \rho <2  \min \{ (2\sqrt{\nu})^{-1}, e^{-1}\}$). Now for any $z \in D_R$, and using the fact that
 \[
 \frac{1}{\phi^2_{G(\nu)}(z)}= \left(e^{2iz} (1-2iz) \right)^\nu,
 \]
 one can readily conclude that the function $\phi_{G(\nu)}(z)$ is bounded away from $0$ on the disk $D_R$. Also, for any $r \ge 2$,
 \begin{equation}\label{eq:lower-2}
 \begin{split}
\abs[\big]{ \kappa_r (F_n) } & \le 2^{r-1}(r-1)! \sum_{i\ge 1} \abs{ c_{i,n} }^{r}  \le  2^{r-1}(r-1)! \max_{i} \abs{ c_{i,n} }^{r-2} \sum_{i\ge 1} \abs{ c_{i,n} }^{2}\\
& \le  2^{r-2}(r-1)! \sqrt{\nu}^{\,r-2} \, \E(F^2_n) = 2^{r-2}(r-1)! \sqrt{\nu}^{\,r}.
\end{split}
\end{equation}
Therefore, for any $z \in D_R$,
\begin{align*}
\abs[\Big]{ \frac{1}{\phi_{F_n}(z)} } \le \exp \Big\{ \sum_{r \ge 2} \frac{ \abs{ \kappa_r(F_n) } }{r!} \abs{z}^r \Big\} & \le  \exp \Big\{ \sum_{r \ge 2} \frac{2^{r-2}(r-1)! \sqrt{\nu}^{\,r}}{r!} \abs{z}^r \Big\}\\
& \le  \exp \Big\{ \sum_{r \ge 2} \frac{2^{r-2}(r-1)! \sqrt{\nu}^{\,r}}{r!} R^r \Big\}:= C_{R,\nu}< \infty.
\end{align*} 
Hence the function $ \phi_{F_n}(z)$ is also bounded away from $0$ on the disk $D_R$. Also, relation \eqref{eq:lower-2} implies that the following power series (complex variable) converge to some analytic function as soon as $\abs{z} < R$;
\begin{equation}\label{eq:lower-4}
\sum_{r\ge1}\frac{\kappa_r(F_n)}{r!}(iz)^r, \quad \sum_{r\ge1}\frac{\kappa_r(G(\nu))}{r!}(iz)^r.
\end{equation}
Thus we come to the conclusion that the functions $\phi_{G(\nu)}(z)$ and $\phi_{F_n}(z)$ are analytic on the disk $D_R$. Moreover, there exists a constant $c >0$ such that $\abs{ \phi_{G(\nu)}(z) },  \abs{ \phi_{F_n}(z) } \ge c >0$ for every $z \in D_R$. This implies that on the disk $D_R$ there exist two analytic functions $g_n$ and $g_\nu$ such that
\[
\phi_{F_n}(z)=e^{g_n (z)}, \quad \phi_{G(\nu)}(z)=e^{g_\nu (z)},
\]
i.e. $g_n (z)= \log (\phi_{F_n}(z))$ and $g_\nu(z)=\log(\phi_{G(\nu)}(z))$, for $z \in D_R$. In fact, the functions $g_n$ and $g_\nu$ are given by the power series \eqref{eq:lower-4}. Since the derivative of the analytic branch of the complex logarithm is  $(\log z)' = \frac{1}{z}$ (see \cite[Corollary $2.21$]{conway}), one can infer that for some constant $C$, whose value may differ from line to line, and for every $z \in D_R$, we have 
\begin{align*}
\abs[\Big]{ \sum_{r \ge 2} \frac{\kappa_r(F_n) - \kappa_r (G(\nu))}{r!}(iz)^r } & = \abs[\Big]{  \log (\phi_{F_n}(z)) - \log(\phi_{G(\nu)}(z)) } \\
& \le_C \abs[\Big]{ \phi_{F_n}(z) - \phi_{G(\nu)}(z) } \le_C d_{2} (F_n, G(\nu)).
\end{align*}
 Now, using Cauchy's estimate for the coefficients of analytic functions, for any $r \ge 3$, we obtain that
 \[
 \abs[\Big]{ \kappa_r (F_n) - \kappa_r (G(\nu)) } \le r! R^r \sup_{\abs{z} \le R} \abs[\Big]{ \log \phi_{F_n} (z) - \log \phi_{G(\nu)}(z) }.
 \]
Therefore,
\[
\max \Big\{  \abs[\Big]{ \kappa_3 (F_n) - \kappa_3 (G(\nu)) }, \abs[\Big]{ \kappa_4 (F_n) - \kappa_4 (G(\nu)) } \Big\} \le_{C} d_{2} (F_n, G(\nu)) .
\]
\end{proof}

To demonstrate the power of Theorem \ref{thm:optimal}, we consider a second order U-statistic with degeneracy order $1$. The following example is taken from \cite[Section~3.1]{a-a-p-s}.
\begin{ex}
Let $\{h_i \}_{i \geq 1}$ be an orthonormal basis of $\HH$ and for $i \geq 1$ set $Z_i := I_1(h_i)$. For $a \neq 0$ consider
\[ U_n = \frac{2a}{n (n-1)} \sum_{1 \leq i < j \leq n} Z_i Z_j = I_2 \bigg( \frac{2a}{n (n-1)} \sum_{1 \leq i < j \leq n} h_i \widetilde{\otimes} h_j \bigg). \]
Then $nU_n \stackrel{\mathcal{D}}{\to} a (Z_1^2 -1)$ as $n \to \infty$. Since the target is only distributed according to a centered Gamma distribution if $a=1$, we will restrict ourselves to this case and write $G(1)$ for the target. Furthermore, in our setting, we need to fix the variance of our sequence to $2$. Hence we consider
\begin{align*}
W_n := \sqrt{\frac{n-1}{n}} n U_n & = I_2 \bigg(  \frac{2}{\sqrt{n (n-1)}} \sum_{1 \leq i < j \leq n} h_i \symtensor h_j \bigg)  \\
& = I_2 \bigg( \frac{1}{\sqrt{n (n-1)}} \sum_{\substack{1 \leq i,j \leq n \\ i \neq j}} h_i \tensor h_j \bigg) =: I_2(f_n)
\end{align*}
We consider the associated Hilbert-Schmidt operator $A_{f_n} g = f_n \cont{1} g$. Using the fact that $(h_i \tensor h_j) \cont{1} h_k = \sprod{h_i, h_k}_{\HH} \, h_j$ we can explicitly compute the non-zero eigenvalues $c_{1,n}, \ldots, c_{n,n}$ of $A_{f_n}$. They are
\[ c_{1,n} = \sqrt{\frac{n-1}{n}}, \text{ and } c_{2,n} = \ldots = c_{n,n} = \frac{-1}{\sqrt{n (n-1)}}.  \]
Since our target has $1$ degree of freedom, the assumptions of Theorem \ref{thm:optimal} are in order (see Proposition \ref{lem:finite-eigenvalue-1}(a)) and thus the optimality result \eqref{eq:optimal-finite-eigenvalue} holds for $W_n$. Also, with the eigenvalues given above and Lemma \ref{lem:Var(Gamma_r-2Gamma_r-1)}, one may verify manually that $\Var(\Gamma_3(W_n) - 2 \Gamma_2(W_n) ) \approx_C \Var^2(\Gamma_1(W_n) - 2 W_n) \approx_C \frac{1}{n^2}$. As a consequence
\[ 
d_2\big(W_n, G(1) \big) \approx_C \abs[\big]{\kappa_3(W_n) - \kappa_3(G(1))} \approx_C \abs[\big]{\kappa_4(W_n) - \kappa_4(G(1))} \approx_C \frac{1}{n}. \]
\end{ex}

\subsection{Trace Class Operators}\label{sec:trace-class}
\begin{lem}\label{lem:><trace}
Let $F= I_2(f)$ be a random element in the second Wiener chaos such that $A^4_f - A^3_f \ge 0$ (or $\le 0$) is a non-negative (or non-positive) operator, where $A_f$ is the associated Hilbert-Schmidt operator. Then
\[
\Var \left( \Gamma_3 (F) - 2 \Gamma_2 (F) \right) \le 2 \times 3!^2 \, \Big( \kappa_4(F) -6 \kappa_3(F) \Big)^2.
\]
\end{lem}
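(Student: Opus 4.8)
The plan is to reduce both sides of the asserted inequality to spectral sums of the self-adjoint Hilbert--Schmidt operator $A_f$, and then to exploit the sign hypothesis through the elementary fact that, for a family of reals of one common sign, the sum of squares is dominated by the square of the sum. First I would record the two spectral identities that are already available. From Lemma~\ref{lem:Var(Gamma_r-2Gamma_r-1)} with $r=3$ (equivalently, from the trace computation carried out in the proof of Lemma~\ref{lem:r+1<r} with $r=2$), one has
\[
\Var\big(\Gamma_3(F) - 2\Gamma_2(F)\big) = 2^{7}\,\Tr\big((A_f^{4} - A_f^{3})^2\big) = 2^{7}\sum_{i\ge 1} c_{f,i}^{6}\,(c_{f,i}-1)^2 .
\]
On the cumulant side, \eqref{eq:SecondWienerChaosCumulantFormulaEigenvalues} gives $\kappa_3(F) = 8\,\Tr(A_f^{3})$ and $\kappa_4(F)=48\,\Tr(A_f^{4})$, whence
\[
\kappa_4(F) - 6\kappa_3(F) = 48\big(\Tr(A_f^{4}) - \Tr(A_f^{3})\big) = 48\,\Tr\big(A_f^{4} - A_f^{3}\big).
\]
Since $A_f$ is Hilbert--Schmidt, $\sum_i c_{f,i}^2 < \infty$, so all these series converge absolutely and $B := A_f^{4} - A_f^{3}$ is a self-adjoint trace-class operator with real eigenvalues $\mu_i := c_{f,i}^{3}(c_{f,i}-1)$.

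The heart of the argument, and the only place where the hypothesis is used, is the following observation. The assumption $A_f^{4} - A_f^{3} \ge 0$ (respectively $\le 0$) means precisely that every eigenvalue $\mu_i$ is $\ge 0$ (respectively $\le 0$), so in either case all the $\mu_i$ share a common sign. Consequently the cross terms $\mu_i\mu_j$ with $i\neq j$ are non-negative, and
\[
\Tr(B^2) = \sum_{i\ge 1}\mu_i^2 \;\le\; \sum_{i\ge 1}\mu_i^2 + \sum_{i\neq j}\mu_i\mu_j = \Big(\sum_{i\ge1}\mu_i\Big)^2 = \big(\Tr B\big)^2 .
\]
I would stress that sign-definiteness is indispensable: for an indefinite self-adjoint $B$ the inequality $\Tr(B^2)\le(\Tr B)^2$ fails in general (eigenvalues $+1,-1$ already violate it), so the entire content of the lemma is concentrated in this single spectral step; everything else is bookkeeping of the constants produced by Lemma~\ref{lem:Var(Gamma_r-2Gamma_r-1)} and \eqref{eq:SecondWienerChaosCumulantFormulaEigenvalues}.

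Combining the three displays yields
\[
\Var\big(\Gamma_3(F) - 2\Gamma_2(F)\big) = 2^{7}\,\Tr(B^2) \le 2^{7}\,(\Tr B)^2 = \frac{2^{7}}{48^{2}}\big(\kappa_4(F) - 6\kappa_3(F)\big)^2 = \frac{1}{18}\big(\kappa_4(F) - 6\kappa_3(F)\big)^2 .
\]
This is in fact stronger than the claimed estimate, since $\tfrac{1}{18} \le 2\cdot 3!^{2} = 72$, so no sharpening of the spectral step is required to obtain the stated constant. The main (indeed the sole) obstacle is thus conceptual rather than computational: one must recognize that the hypothesis on the sign of $A_f^{4}-A_f^{3}$ is exactly the condition that turns the crude inequality $\Tr(B^2)\le(\Tr B)^2$ from false into true, thereby relating the variance $\Var(\Gamma_3(F)-2\Gamma_2(F))$ to the ``efficient'' cumulant combination $\kappa_4(F)-6\kappa_3(F)$ that vanishes at the target $G(\nu)$.
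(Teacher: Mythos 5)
Your proof is correct and follows essentially the same route as the paper: both reduce $\Var\left( \Gamma_3(F) - 2\Gamma_2(F) \right)$ to $2^7 \, \Tr\left( (A_f^4 - A_f^3)^2 \right)$, exploit the sign hypothesis on $A_f^4 - A_f^3$ to obtain $\Tr(B^2) \le (\Tr B)^2$ (the paper cites Liu's trace inequality, where you verify the same fact directly on the eigenvalues), and then convert $\Tr(A_f^4 - A_f^3)$ back into the cumulant combination $\kappa_4(F) - 6\kappa_3(F)$. Your sharper constant $\tfrac{1}{18}$ is in fact the correct outcome of this computation: the paper's final step asserts $2^7 \left( \Tr(A_f^4 - A_f^3) \right)^2 = 2 \times 3!^2 \left( \kappa_4(F) - 6\kappa_3(F) \right)^2$, which is a slip in the constant (the left-hand side equals $\tfrac{1}{18}\left( \kappa_4(F) - 6\kappa_3(F) \right)^2$), though the lemma as stated remains true since $\tfrac{1}{18} \le 2 \times 3!^2$.
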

\begin{proof} Using relation \eqref{eq:3},  and the main result of \cite{Liu}, one can write
\begin{align*}
 \Var \left( \Gamma_3 (F) - 2 \Gamma_2 (F) \right) &= \frac{1}{7!} \kappa_8 (F) - \frac{4}{6!} \kappa_7(F) + \frac{4}{5!} \kappa_6(F) \\
 & \hspace{-3em} = 2^7 \, \Tr (A^8_f) - 2^8 \, \Tr (A^7_f) + 2^7 \, \Tr (A^6_f)  = 2^7 \, \Tr (A^8_f - 2 A^7_f + A^6_f) \\
 & \hspace{-3em} = 2^7 \, \Tr \left( (A^4_f - A^3_f )^2 \right) \le 2^7  \left( \Tr( A^4_f - A^3_f) \right)^2
 = 2 \times 3!^2 \, \Big( \kappa_4(F) -6 \kappa_3(F) \Big)^2.
\end{align*}
\end{proof}

Now, we can state the following \textbf{non asymptotic} version of the optimal rate of convergence towards the centered Gamma distribution $G(\nu)$.
\begin{prop}\label{prop:crazy}
Let $\nu >0$. Assume that $F=I_2 (f)$ is a random element in the second Wiener chaos such that $\E(F^2) =2 \nu$. Moreover, assume that $A^4_{f} - A^3_{f} \ge 0$  (or $\le 0$). Then there exist two constants $0 < C_1 < C_2$, such that
\begin{equation}\label{eq:optimal-finite-eigenvalue2}
C_1 \,  \mathbf{M} (F) 
\le d_{2} (F, G(\nu)) \le C_2 \,  \mathbf{M} (F),
\end{equation}
where, as before, $ \mathbf{M}(F) :=  \max \Big\{ \abs[\Big]{ \kappa_3 (F) - \kappa_3 (G(\nu)) }, \abs[\Big]{ \kappa_4 (F) - \kappa_4 (G(\nu)) }  \Big\}$.  
\end{prop}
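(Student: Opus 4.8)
The plan is to prove the two bounds in \eqref{eq:optimal-finite-eigenvalue2} separately. For the upper bound I would invoke the master estimate of Theorem \ref{thm:MainMalliavinSteinBound} and show that each of its five summands is dominated by $\mathbf{M}(F)$, so that $d_2(F,G(\nu)) \le C_2\,\mathbf{M}(F)$. For the lower bound I would transcribe the analyticity argument from the proof of Theorem \ref{thm:optimal}, which is already non-asymptotic and applies verbatim to a single element $F=I_2(f)$.

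For the upper bound, I first treat $\E[(2F - \CenteredGamma_1(F))^2]$. Since $F$ lives in the second Wiener chaos ($q=2$) with $\E[F^2]=2\nu$, and since $\CenteredGamma_1(F) = \Gamma_1(F) - 2\nu$ gives $2F - \CenteredGamma_1(F) = 2F + 2\nu - \Gamma_1(F)$, inequality \eqref{eq:Var(Gamma_1-2F)leqCumulants} yields directly $\E[(2F - \CenteredGamma_1(F))^2] \le C\,\mathbf{M}(F)$. For the cross term, the ``in particular'' consequence of Lemma \ref{lem:r+1<r} with $r=1$ bounds the factor $\E[(\CenteredGamma_2(F) - 2\CenteredGamma_1(F))^2]^{1/2} = \sqrt{\Var(\Gamma_2(F) - 2\Gamma_1(F))}$ by $\sqrt{4\nu}\,\E[(2F - \CenteredGamma_1(F))^2]^{1/2}$, so the whole cross term is at most $\sqrt{4\nu}\,\E[(2F - \CenteredGamma_1(F))^2] \le C\,\mathbf{M}(F)$. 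The two trailing cumulant differences are at most $2\,\mathbf{M}(F)$ by the very definition of $\mathbf{M}(F)$.

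The only summand not already controlled by the universal estimates is $\E[(\CenteredGamma_3(F) - 2\CenteredGamma_2(F))^2]^{1/2} = \sqrt{\Var(\Gamma_3(F) - 2\Gamma_2(F))}$, and this is exactly where the sign hypothesis $A_f^4 - A_f^3 \ge 0$ (or $\le 0$) enters. Lemma \ref{lem:><trace} gives $\Var(\Gamma_3(F) - 2\Gamma_2(F)) \le 2\cdot 3!^2\,(\kappa_4(F) - 6\kappa_3(F))^2$. Because $\kappa_4(G(\nu)) - 6\kappa_3(G(\nu)) = 48\nu - 48\nu = 0$, I may rewrite $\kappa_4(F) - 6\kappa_3(F) = (\kappa_4(F) - \kappa_4(G(\nu))) - 6(\kappa_3(F) - \kappa_3(G(\nu)))$, whence $\abs{\kappa_4(F) - 6\kappa_3(F)} \le 7\,\mathbf{M}(F)$ and therefore $\sqrt{\Var(\Gamma_3(F) - 2\Gamma_2(F))} \le C\,\mathbf{M}(F)$. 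Collecting all five bounds proves $d_2(F,G(\nu)) \le C_2\,\mathbf{M}(F)$. The decisive point, and the main obstacle in general, is precisely this control of $\Var(\Gamma_3 - 2\Gamma_2)$: without the operator-sign assumption one cannot pass from $\Tr((A_f^4 - A_f^3)^2)$ to $(\Tr(A_f^4 - A_f^3))^2$ through the trace inequality of \cite{Liu}, and the ``efficient'' cumulant combination $\kappa_4 - 6\kappa_3$ (which vanishes at the target) would fail to surface.

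For the lower bound I reuse the proof of Theorem \ref{thm:optimal} with the sequence replaced by the single variable $F$. The ingredients are all non-asymptotic: the second-moment identity $\E[F^2]=2\nu$ forces $\max_i \abs{c_{f,i}} \le \sqrt{\nu}$ and hence the cumulant estimate \eqref{eq:lower-2}, which guarantees that $\phi_F$ and $\phi_{G(\nu)}$ are analytic and bounded away from zero on a fixed disk $D_R$ depending only on $\nu$; the bound $\abs{\phi_F(z) - \phi_{G(\nu)}(z)} \le C\,d_2(F,G(\nu))$ on $D_R$ then transfers to the logarithms, and Cauchy's coefficient estimate extracts $\max\{\abs{\kappa_3(F) - \kappa_3(G(\nu))}, \abs{\kappa_4(F) - \kappa_4(G(\nu))}\} \le C\,d_2(F,G(\nu))$, i.e. $C_1\,\mathbf{M}(F) \le d_2(F,G(\nu))$. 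Combining the two directions yields \eqref{eq:optimal-finite-eigenvalue2}.
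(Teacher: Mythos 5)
Your proposal is correct and takes essentially the same route as the paper: the paper's proof of Proposition \ref{prop:crazy} is precisely the one-line combination of Theorem \ref{thm:MainMalliavinSteinBound}, Lemma \ref{lem:r+1<r} and Lemma \ref{lem:><trace} for the upper bound (with the universal estimate \eqref{eq:Var(Gamma_1-2F)leqCumulants} controlling the $\Var(\Gamma_1(F)-2F)$ terms, as you do), and the lower bound is likewise derived from the non-asymptotic analyticity argument of Theorem \ref{thm:optimal}. You have simply filled in the details the paper leaves implicit, including the correct observation that $\kappa_4(G(\nu))-6\kappa_3(G(\nu))=0$ is what makes Lemma \ref{lem:><trace} yield a bound by $\mathbf{M}(F)$.
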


\begin{proof}
For the upper bound, combine Theorem \ref{thm:MainMalliavinSteinBound} together with Lemma \ref{lem:r+1<r} and Lemma \ref{lem:><trace}. The lower bound is derived from Theorem \ref{thm:optimal}. 
\end{proof}

We close this section with two lemmas of independent interests. The first lemma gathers some non-asymptotic variance-estimates and will be used in the proof of Lemma \ref{lem:chain} in Section \ref{sec:gamma-charac}. The second lemma displays that differences of all higher cumulants can be controlled from above by the quantity $\textbf{M}(F)$. 
\begin{lem}\label{lem:V-etimate}
Let $F=I_2(f)$ be a general element in the second Wiener chaos. Then, for $r \ge 1$, the following estimates hold.
\begin{equation}\label{eq:Vestimate-1}
\Var^{\, 2} \left( \Gamma_{r+1}(F) - 2 \Gamma_r(F) \right) \le_C \Var \left( \Gamma_r(F) - 2 \Gamma_{r-1}(F) \right) \times \Var \left( \Gamma_{r+2}(F) - 2 \Gamma_{r+1}(F) \right),
\end{equation}
\begin{equation}\label{eq:Vestimate-2}
\Var^{\, 2r} \left( \Gamma_{2}(F) - 2 \Gamma_1(F) \right) \le_C \Var^{\, 2r-1} \left( \Gamma_1(F) - 2 F \right) \times \Var \left( \Gamma_{2r+1}(F) - 2 \Gamma_{2r}(F) \right),
\end{equation}
where the general constant $C$ is independent of $F$. In particular,

\begin{equation*}
\Var^{\, 2} \left( \Gamma_{2}(F) - 2 \Gamma_1(F) \right) \le_C \Var \left( \Gamma_1(F) - 2 F \right) \times \Var \left( \Gamma_{3}(F) - 2 \Gamma_{2}(F) \right).
\end{equation*}
Moreover,
\begin{align*}
\Var & \Big( \left(  \Gamma_{2r+1}(F) - 2 \Gamma_{2r}(F) \right) -2 \left(  \Gamma_{2r}(F) - 2 \Gamma_{2r-1}(F) \right)  \Big)  \le_C  \Var^{\, 2} \left(  \Gamma_{r}(F) -2 \Gamma_{r-1} (F) \right)\\
&  \qquad \le_C \Var \left( \Gamma_{r-1}(F) - 2 \Gamma_{r-2}(F) \right) \times \Var \left( \Gamma_{r+1}(F) - 2 \Gamma_{r}(F) \right).
\end{align*}

\end{lem}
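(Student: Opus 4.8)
The plan is to reduce every term to a power sum in the eigenvalues $c_i := c_{f,i}$ of $A_f$ and then recognise the inequalities as instances of H\"older's inequality. Set $V_s := \Var\big(\Gamma_s(F) - 2\Gamma_{s-1}(F)\big)$; by Lemma~\ref{lem:Var(Gamma_r-2Gamma_r-1)} one has the closed form $V_s = 2^{2s+1} W_s$ with the non-negative sequence $W_s := \sum_{i} c_i^{2s}(c_i-1)^2$. Since $\log V_s = (2s+1)\log 2 + \log W_s$ differs from $\log W_s$ by an affine function of $s$, the sequences $(V_s)$ and $(W_s)$ share all log-convexity properties, and the powers of two will balance exactly in each estimate, so all constants come out explicit.

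For \eqref{eq:Vestimate-1} I would apply Cauchy--Schwarz to the sequences with $i$-th entries $c_i^{r}(c_i-1)$ and $c_i^{r+2}(c_i-1)$, giving $W_{r+1}^2 \le W_r\,W_{r+2}$; multiplying by $2^{2(2r+3)} = 2^{2r+1}\cdot 2^{2r+5}$ turns this into $V_{r+1}^2 \le V_r\,V_{r+2}$. For \eqref{eq:Vestimate-2} I would interpolate between the indices $1$ and $2r+1$ to reach index $2$: writing $c_i^{4}(c_i-1)^2 = \big(c_i^{2}(c_i-1)^2\big)^{\frac{2r-1}{2r}}\big(c_i^{4r+2}(c_i-1)^2\big)^{\frac{1}{2r}}$ and applying H\"older with exponents $\tfrac{2r}{2r-1}$ and $2r$ yields $W_2 \le W_1^{\frac{2r-1}{2r}} W_{2r+1}^{\frac{1}{2r}}$, hence $W_2^{2r} \le W_1^{2r-1} W_{2r+1}$ after raising to the power $2r$; reinstating the (again exactly balancing) powers of two gives \eqref{eq:Vestimate-2}. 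The displayed particular case is just \eqref{eq:Vestimate-2} at $r=1$.

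The genuinely new computation --- and the step I expect to be the main obstacle --- is the first inequality of the ``Moreover'' block, whose left-hand side is not one of the $V_s$ but the variance of a second difference. Here I would invoke the contraction representation \eqref{eq:CentredGammaInTermsOfContraction}, $\CenteredGamma_s(F) = 2^s I_2\big(f\contIterated{1}{s+1}f\big)$, to compute
\[
\CenteredGamma_{2r+1}(F) - 4\CenteredGamma_{2r}(F) + 4\CenteredGamma_{2r-1}(F) = 2^{2r+1} I_2\Big( f\contIterated{1}{2r+2}f - 2\,f\contIterated{1}{2r+1}f + f\contIterated{1}{2r}f \Big),
\]
which, since recentering does not affect the variance, is exactly $\big(\Gamma_{2r+1}(F)-2\Gamma_{2r}(F)\big) - 2\big(\Gamma_{2r}(F)-2\Gamma_{2r-1}(F)\big)$ up to an additive constant. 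Using the isometry \eqref{eq:IsometryProperty} together with the eigenvalue representation $f\contIterated{1}{p}f = \sum_i c_i^{p}\,e_i\otimes e_i$ in an orthonormal eigenbasis $(e_i)$ of $A_f$, its variance equals $2^{4r+3}\sum_i c_i^{4r}(c_i-1)^4$. The decisive feature is the fourth power $(c_i-1)^4$: bounding the diagonal sum $\sum_i\big(c_i^{2r}(c_i-1)^2\big)^2$ by the full square $\big(\sum_i c_i^{2r}(c_i-1)^2\big)^2 = W_r^2$ (legitimate as every summand is non-negative) yields $\Var \le 2\,V_r^2$, i.e. the claimed bound with $C=2$.

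Finally, the second ``Moreover'' inequality is simply \eqref{eq:Vestimate-1} with $r$ replaced by $r-1$. The only bookkeeping to watch is the cancellation of the additive $\E[\Gamma_s(F)]$ terms (automatic, by invariance of the variance under recentering) and the matching of the powers of two at each step; once these are checked, the whole lemma collapses to H\"older applied to the single non-negative sequence $(W_s)$, plus the elementary observation $\sum_i a_i^2 \le (\sum_i a_i)^2$ for the one second-difference term.
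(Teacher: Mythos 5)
Your proof is correct, and it takes a more self-contained route than the paper. The paper's own proof is two lines: it rewrites each variance as a trace, $\Var\left(\Gamma_{r+1}(F)-2\Gamma_r(F)\right)=2^{2r+3}\,\Tr\left((A_f^{r+2}-A_f^{r+1})^2\right)$, and then invokes two external results of Dragomir, namely \cite[Corollary 1]{D-trace-1} (with $P=(A_f^{r+1}-A_f^r)^2$, $C=A_f^2$) for \eqref{eq:Vestimate-1}, and \cite[Corollary 1]{D-trace-2} (with $P=(A_f^2-A_f)^2$ and the convex function $x\mapsto x^{2r}$) for \eqref{eq:Vestimate-2}. Since in this application the weight $P$ and the operator $C$ are both functions of $A_f$, hence simultaneously diagonalizable, those cited operator inequalities reduce exactly to the scalar weighted Cauchy--Schwarz and H\"older/Jensen inequalities on the power sums $W_s=\sum_i c_i^{2s}(c_i-1)^2$ that you employ; so the mathematical content is the same, but your version avoids the black-box citations and produces explicit constants ($C=1$ in \eqref{eq:Vestimate-1} and \eqref{eq:Vestimate-2}, $C=2$ in the second-difference bound). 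You also supply something the paper's proof omits entirely: the first ``Moreover'' inequality is not addressed there, whereas you derive it in full, correctly obtaining $\Var\big((\Gamma_{2r+1}(F)-2\Gamma_{2r}(F))-2(\Gamma_{2r}(F)-2\Gamma_{2r-1}(F))\big)=2^{4r+3}\sum_i c_i^{4r}(c_i-1)^4$ from the representation \eqref{eq:CentredGammaInTermsOfContraction} and the isometry \eqref{eq:IsometryProperty}, and then bounding the diagonal sum of squares by the squared sum of the non-negative terms. Two minor remarks: your H\"older step in \eqref{eq:Vestimate-2} is legitimate precisely because all exponents act on even powers of the (possibly negative) eigenvalues, which you implicitly use and could state; and the reduction of the second ``Moreover'' inequality to \eqref{eq:Vestimate-1} with $r$ replaced by $r-1$ requires $r\ge 2$ so that $\Gamma_{r-2}$ is defined, but that restriction is inherent in the statement itself, not a gap in your argument.
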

\begin{proof}
This is a direct application of \cite[Corollary 1]{D-trace-1} with $P=(A^{r+1}_f - A^{r}_f)^2, C=A^2_f$, and the fact that, for $r \ge 0$, we have $$\Var\left( \Gamma_{r+1}(F) - 2 \Gamma_{r}(F) \right) = 2^{2r+3} \Tr \left( (A^{r+2}_f - A^{r+1}_f)^2 \right).$$ The estimate \eqref{eq:Vestimate-2} is also an application of \cite[Corollary 1]{D-trace-2} with $P=(A^2_f - A_f)^2$, and the convex function $f(x)=x^{2r}$.
\end{proof}

\begin{lem}\label{lem:cumulant-difference}
Let $\nu >0$, and $F= I_2 (f)$ in the second Wiener chaos so that $\E[F^2]=2\nu$. Then, for every $r\ge 1$, there exists a constant $C$ (depending only on $\nu$, and $r$) such that
                                                                                
\begin{equation}\label{eq:gamma-difference}
 \abs[\Big]{\E [\Gamma_{r+1} (F)] - 2 \E [\Gamma_r (F)] }  \le_C \mathbf{M} (F),
\end{equation}
and also, 
\begin{equation}\label{eq:cumulant-difference}
\abs[\Big]{  \kappa_{r} (F) - \kappa_{r} (G(\nu)) } \le_C  \mathbf{M} (F).
\end{equation}
\end{lem}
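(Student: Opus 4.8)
The plan is to push everything down to the eigenvalues $c_i := c_{f,i}$ of $A_f$ and to reduce both inequalities to a single linearised estimate. By Proposition~\ref{Prop:2choas-properties} together with the normalisation $\E[F^2]=2\nu$ we have $\sum_i c_i^2=\nu$, $|c_i|\le\sqrt\nu$ for every $i$, and $\kappa_p(F)-\kappa_p(G(\nu)) = 2^{p-1}(p-1)!\,\big(\sum_i c_i^p-\nu\big)$ for $p\ge 2$ by \eqref{eq:CenteredGammaCumulants}. Since on the second chaos $\Gamma_r=\Gamma_{alt,r}$ (Proposition~\ref{Prop:RelationOldAndNewGamma}(d)) and $\E[\Gamma_r(F)]=\tfrac1{r!}\kappa_{r+1}(F)$, a direct computation gives
\[ \E[\Gamma_{r+1}(F)]-2\E[\Gamma_r(F)] = 2^{r+1}\sum_i c_i^{r+1}(c_i-1), \qquad \kappa_r(F)-\kappa_r(G(\nu)) = 2^{r-1}(r-1)!\sum_i c_i^2\big(c_i^{r-2}-1\big). \]
Using $c_i^{r-2}-1=(c_i-1)(c_i^{r-3}+\dots+1)$, both right-hand sides are, up to a fixed constant, of the form $\sum_i c_i^2(c_i-1)\,q(c_i)$ for a polynomial $q$ depending only on $r$ (namely $q(x)=x^{r-1}$ in the first case and $q(x)=x^{r-3}+\dots+x+1$ in the second). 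The cases $r\le 2$ of \eqref{eq:cumulant-difference} are trivial, since those cumulants already agree.

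The crucial observation is that the weighted second moment of $c_i-1$ is controlled \emph{linearly}, not merely by a square root of, $\mathbf{M}(F)$. Indeed, expanding and using $\sum_i c_i^2=\nu$,
\[ A := \sum_i c_i^2(c_i-1)^2 = \sum_i c_i^4 - 2\sum_i c_i^3 + \nu = \tfrac1{48}\big(\kappa_4(F)-\kappa_4(G(\nu))\big) - \tfrac14\big(\kappa_3(F)-\kappa_3(G(\nu))\big), \]
so that $0\le A\le \tfrac{13}{48}\,\mathbf{M}(F)$. This is exactly $\tfrac18\Var(\Gamma_1(F)-2F)$, consistent with Lemma~\ref{lem:Var(Gamma_r-2Gamma_r-1)} at $r=1$.

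With $A$ in hand I would establish a single master estimate: for every polynomial $q$ there is a constant $C$, depending only on $q$ and $\nu$, with $\big|\sum_i c_i^2(c_i-1)\,q(c_i)\big| \le C\,\mathbf{M}(F)$. Writing $q(x)-q(1)=(x-1)\tilde q(x)$ for a polynomial $\tilde q$, one splits
\[ \sum_i c_i^2(c_i-1)q(c_i) = q(1)\sum_i c_i^2(c_i-1) + \sum_i c_i^2(c_i-1)^2\,\tilde q(c_i). \]
The first term equals $q(1)\big(\sum_i c_i^3-\nu\big)=\tfrac{q(1)}8\big(\kappa_3(F)-\kappa_3(G(\nu))\big)$, hence is bounded by $\tfrac{|q(1)|}8\mathbf{M}(F)$; the second is at most $\big(\max_{|x|\le\sqrt\nu}|\tilde q(x)|\big)\,A\le_C\mathbf{M}(F)$, because $|c_i|\le\sqrt\nu$ confines $\tilde q(c_i)$ to a uniformly bounded range. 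Inserting the two polynomials $q$ identified above and restoring the prefactors $2^{r+1}$ and $2^{r-1}(r-1)!$ then yields \eqref{eq:gamma-difference} and \eqref{eq:cumulant-difference} respectively.

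The main point — and the only place requiring care — is precisely this linear control: a naive Cauchy--Schwarz bound $\big|\sum_i c_i^2(c_i-1)q(c_i)\big|\le \|q\|_\infty\sqrt\nu\,\sqrt A$ would cost a square root and only deliver $\sqrt{\mathbf{M}(F)}$, which is useless for the optimal (square-root-free) rate. The factorisation $q(x)-q(1)=(x-1)\tilde q(x)$ is exactly what lets me peel off one genuinely linear term, absorbed by the third-cumulant difference, and leave a remainder carrying a full factor $(c_i-1)^2$ that is matched by $A$. The boundedness $|c_i|\le\sqrt\nu$ is the second ingredient and is where the dependence of $C$ on $\nu$ and $r$ enters.
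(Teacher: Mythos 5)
Your proof is correct, but it takes a genuinely different route from the paper's. The paper proves \eqref{eq:gamma-difference} by induction on $r$: it adds and subtracts $4\kappa_{r+1}(F)/r!$, recognizes the resulting combination $\frac{\kappa_{r+3}(F)}{(r+2)!} - 4\frac{\kappa_{r+2}(F)}{(r+1)!} + 4\frac{\kappa_{r+1}(F)}{r!}$ either as $\Var\left( \Gamma_{s+1}(F) - 2\Gamma_s(F)\right)$ when $r$ is odd, or as the covariance $\E \big[ \left( \CenteredGamma_{s+1}(F) - 2\CenteredGamma_s(F)\right)\left( \CenteredGamma_s(F) - 2\CenteredGamma_{s-1}(F)\right)\big]$ handled by Cauchy--Schwarz when $r$ is even, and then pushes both cases down to $\Var\left(\Gamma_1(F)-2F\right) \le_C \mathbf{M}(F)$ via the chain of variance estimates of Lemma \ref{lem:r+1<r} (which rests on a trace inequality); estimate \eqref{eq:cumulant-difference} is then obtained by a second induction using $\kappa_{r+1}(G(\nu)) = 2r\,\kappa_r(G(\nu))$. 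You instead diagonalize once and for all, reduce both claims to the single quantity $\sum_i c_i^2(c_i-1)q(c_i)$ for suitable polynomials $q$, and handle it by the factorization $q(x)-q(1)=(x-1)\tilde q(x)$: the $q(1)$-term is exactly a multiple of $\kappa_3(F)-\kappa_3(G(\nu))$, while the remainder carries a full factor $(c_i-1)^2$ and is absorbed by $A = \tfrac18\Var\left(\Gamma_1(F)-2F\right) \le \tfrac{13}{48}\,\mathbf{M}(F)$, with $\abs{c_i}\le\sqrt{\nu}$ bounding $\tilde q(c_i)$ uniformly. Both arguments hinge on the same structural fact, namely that $\Var\left(\Gamma_1(F)-2F\right)$ is a \emph{linear} combination of the third and fourth cumulant differences, but yours avoids induction, Cauchy--Schwarz and Lemma \ref{lem:r+1<r} entirely, which makes the linearity mechanism transparent and the proof self-contained and elementary; the paper's scheme, by staying at the level of the operators $\Gamma_r$ and the variance quantities $\Var\left(\Gamma_{r+1}(F) - 2\Gamma_r(F)\right)$, remains within the framework used elsewhere in the paper (e.g.\ Lemma \ref{lem:chain}) and is the form of argument one would attempt to extend beyond the second Wiener chaos, where no eigenvalue representation is available.
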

\begin{proof}
We proof estimate \eqref{eq:gamma-difference} by induction on $r$. The estimate \eqref{eq:cumulant-difference} can be derived in a similar way. Obviously \eqref{eq:gamma-difference} holds for $r=1,2$, so we assume that $r \ge 3$. Note that 
\begin{align*}
\abs[\Big]{ \E [\Gamma_{r+2} (F)] - & 2 \E [\Gamma_{r+1}(F)] }  =  \abs[\Big]{ \frac{\kappa_{r+3}(F)}{(r+2)!} - 2 \frac{\kappa_{r+2}(F)}{(r+1)!} } \\
 & \le_C \abs[\Big]{ \frac{\kappa_{r+3}(F)}{(r+2)!} - 4\frac{\kappa_{r+2}(F)}{(r+1)!} + 4\frac{\kappa_{r+1}(F)}{r!} } +  \abs[\Big]{ \frac{\kappa_{r+2}(F)}{(r+1)!} - 2 \frac{\kappa_{r+1}(F)}{r!} } \\
 &=_C   \abs[\Big]{ \frac{\kappa_{r+3}(F)}{(r+2)!} - 4\frac{\kappa_{r+2}(F)}{(r+1)!} + 4\frac{\kappa_{r+1}(F)}{r!} } +   \abs[\Big]{ \E [\Gamma_{r+1} (F)] - 2 \E [\Gamma_r (F)] }.
\end{align*}
The second summand on the right hand side can be handled with the induction hypothesis. For the first summand, we have two possibilities. If $r=2s+1$, for some $s \ge 1$, then 
$$ \frac{\kappa_{r+3}(F)}{(r+2)!} - 4\frac{\kappa_{r+2}(F)}{(r+1)!} + 4\frac{\kappa_{r+1}(F)}{r!} =  \Var \left( \Gamma_{s+1}(F) - 2 \Gamma_{s} (F) \right) \le_C  \Var \left( \Gamma_{1}(F) - 2 F \right),$$ and so we are done. Otherwise, $r=2s$ for some $s \ge 2$. Hence, using Cauchy--Schwarz inequality, we obtain that 

\begin{align*}
 \abs[\Big]{ \frac{\kappa_{r+3}(F)}{(r+2)!} & - 4\frac{\kappa_{r+2}(F)}{(r+1)!} + 4\frac{\kappa_{r+1}(F)}{r!} } = \abs[\Big]{ \frac{\kappa_{2s+3}(F)}{(2s+2)!} - 4\frac{\kappa_{2s+2}(F)}{(2s+1)!} + 4\frac{\kappa_{2s+1}(F)}{(2s)!} } \\
 &= \abs[\Big]{ \E \Big(  \left(    \CenteredGamma_{s+1}(F) -  2 \CenteredGamma_{s}(F)  \right)  \times  \left(  \CenteredGamma_{s}(F) -  2 \CenteredGamma_{s-1}(F)  \right)\Big) } \\
 &\le_C \sqrt{ \Var \left( \Gamma_{s+1}(F) - 2 \Gamma_{s} (F) \right) } \times \sqrt{ \Var \left( \Gamma_{s}(F) - 2 \Gamma_{s-1} (F) \right) }\\
 &\le_C  \Var \left( \Gamma_{1}(F) - 2 F \right).
\end{align*}
For the estimate \eqref{eq:cumulant-difference}, note that $\kappa_{r+1}(G(\nu)) = 2 r \kappa_{r}(G(\nu))$. Therefore,
 \begin{align*}
\abs[\Big]{ \kappa_{r+1}(F) - \kappa_{r+1}(G(\nu)) } & \le \abs[\Big]{ \kappa_{r+1}(F) - 2r \kappa_{r}(F) } + \abs[\Big]{ 2r \kappa_{r}(F) - \kappa_{r+1}(G(\nu)) } \\
& \le_C \abs[\Big]{  \E [\Gamma_{r} (F)] -  2 \E [\Gamma_{r-1}(F)] } + \abs[\Big]{ \kappa_{r}(F) - \kappa_{r}(G(\nu)) }.
\end{align*}
\end{proof}

\subsection{A Further Example: Optimal Rate in Total Variation Distance}\label{sec:concrete-example}

In this section we introduce a concrete example of a sequence within the second Wiener chaos. The corresponding Hilbert-Schmidt will only have two non-zero eigenvalues, both of which are converging to $1$. A crucial observation is that although the presented example lies out of the favorable regimes discussed in Section \ref{sec:2variance-estimate}, the optimal rate $ \mathbf{M}(F_n)$ insists to hold in total variation distance.

\begin{Prop}\label{Prop:concrete-example}
Consider the sequence $\{ F_n = c_{1,n} (N_1^2 -1) + c_{2,n} (N_2^2 -1) \}_{n\geq 1}$ in the second Wiener chaos where $c_{1,n} = \sqrt{1+ \frac{1}{n}}$ and $c_{2,n} = \sqrt{1 - \frac{1}{n}}$. Then
	\[ d_{TV} \big(F_n, G(2) \big)  \approx_C \max \Big\{ \abs[\big]{ \kappa_3(F_n) - \kappa_3(G(2)) } , \abs[\big]{ \kappa_4(F_n) - \kappa_4(G(2)) } \Big\} \approx_C \frac{1}{n^2}. \]
\end{Prop}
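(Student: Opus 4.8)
The plan is to compute everything explicitly using the two eigenvalues $c_{1,n} = \sqrt{1+\frac{1}{n}}$ and $c_{2,n} = \sqrt{1-\frac{1}{n}}$, and then to bracket the total variation distance between matching polynomial upper and lower bounds of order $\frac{1}{n^2}$. First I would establish the cumulant asymptotics. Using the formula $\kappa_p(F_n) = 2^{p-1}(p-1)!\,(c_{1,n}^p + c_{2,n}^p)$ from \eqref{eq:SecondWienerChaosCumulantFormulaEigenvalues} together with $\kappa_p(G(2)) = 2^{p-1}(p-1)!\cdot 2$ from \eqref{eq:CenteredGammaCumulants}, the relevant differences are governed by $c_{1,n}^p + c_{2,n}^p - 2$. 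Since $c_{1,n}^2 + c_{2,n}^2 = 2$ exactly (the variance constraint), the leading correction comes from the next order. Writing $c_{1,n}^2 = 1 + \frac{1}{n}$ and $c_{2,n}^2 = 1 - \frac{1}{n}$ and expanding $c_{i,n}^p = (c_{i,n}^2)^{p/2}$ via the binomial series, the linear-in-$\frac{1}{n}$ terms cancel by symmetry and the surviving term is quadratic, yielding $\abs{\kappa_p(F_n) - \kappa_p(G(2))} \approx_C \frac{1}{n^2}$ for each fixed $p \geq 3$. This gives the right-hand equivalence $\mathbf{M}(F_n) \approx_C \frac{1}{n^2}$.

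Next I would handle the upper bound $d_{TV}(F_n, G(2)) \le_C \frac{1}{n^2}$. The cleanest route is to invoke the total variation estimate quoted in the remark following Corollary \ref{cor:variance123-1}, namely \cite[Theorem 1.2]{rola}, which applies when $M = \nu$; here $M = \nu = 2$. That result gives $d_{TV}(F_n, G(2)) \le_C \max\{\abs{1 - c_{i,n}} : i = 1,2\}$, but this only produces a bound of order $\frac{1}{n}$, which is too weak. The key observation to push this to $\frac{1}{n^2}$ is that the example enjoys the exact symmetry $c_{1,n}^2 - 1 = -(c_{2,n}^2 - 1)$, which is precisely what forces the linear terms in the cumulant differences to cancel. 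I would therefore need a total variation bound sensitive to this cancellation rather than to the crude maximum of the eigenvalue deviations. The natural approach is a direct one: both $F_n$ and $G(2)$ are explicit quadratic forms in Gaussians with densities that can be written via the characteristic functions $\phi_{F_n}(t) = (1 - 2ic_{1,n}t)^{-1/2}(1 - 2ic_{2,n}t)^{-1/2} e^{-i(c_{1,n}+c_{2,n})t}$ and $\phi_{G(2)}(t) = (1-2it)^{-1} e^{-2it}$, and I would estimate $d_{TV}$ through an $L^1$-bound on the difference of densities obtained by Fourier inversion, controlling $\int_{\R} \abs{\phi_{F_n}(t) - \phi_{G(2)}(t)}\,(1 + \abs{t})\,\ud t$ and exploiting that the integrand is $O(\frac{1}{n^2})$ uniformly after the symmetric cancellation.

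For the lower bound $d_{TV}(F_n, G(2)) \ge_C \frac{1}{n^2}$, I would argue that total variation dominates the difference of any fixed bounded-test-functional, in particular dominates (up to constants) the smooth-metric quantity, and hence dominates $\mathbf{M}(F_n)$ by the reasoning underlying Theorem \ref{thm:optimal}. More concretely, since $d_{TV} \ge \frac{1}{2} d_2$ up to normalization is false in the wrong direction, the honest route is to bound $\mathbf{M}(F_n)$ from below by $d_{TV}(F_n, G(2))$ using the analytic characteristic-function machinery from the proof of Theorem \ref{thm:optimal}: the characteristic functions are analytic in a strip, and a Cauchy-estimate argument shows $\mathbf{M}(F_n) \le_C \sup_{\abs{z} \le R}\abs{\phi_{F_n}(z) - \phi_{G(2)}(z)}$, while the latter supremum is controlled by $d_{TV}$ (total variation dominates the sup-distance between characteristic functions on any compact set). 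Combining with the already-established $\mathbf{M}(F_n) \approx_C \frac{1}{n^2}$ then yields $d_{TV}(F_n, G(2)) \ge_C \frac{1}{n^2}$.

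The hard part will be the upper bound, specifically extracting the full order $\frac{1}{n^2}$ rather than the naive $\frac{1}{n}$. The symmetry $c_{1,n}^2 + c_{2,n}^2 = 2$ is what makes this possible, but it must be exploited at the level of the characteristic functions (or densities) uniformly in $t$, not just at the level of individual cumulants; near $t = 0$ the cancellation is transparent from the cumulant expansion, but one must also control the tails of the Fourier integral where the eigenvalue structure enters nonlinearly. I expect the cumulant asymptotics and the lower bound to be routine given the analytic framework already developed in Theorem \ref{thm:optimal}, so the essential work is a careful uniform Fourier-analytic estimate tailored to this two-eigenvalue symmetric configuration.
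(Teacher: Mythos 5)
Your cumulant asymptotics are fine and coincide with the paper's first step: since $c_{1,n}^2+c_{2,n}^2=2$ exactly, the linear terms cancel and $\abs{\kappa_p(F_n)-\kappa_p(G(2))}\approx_C n^{-2}$ for each fixed $p\ge 3$. The genuine gap is in your treatment of $d_{TV}$, in both directions. For the upper bound, the quantity you propose to control, $\int_{\R}\abs{\phi_{F_n}(t)-\phi_{G(2)}(t)}\,(1+\abs{t})\,\ud t$, is actually \emph{infinite}, and the claim that the integrand is $O(n^{-2})$ uniformly in $t$ is false. Both laws have densities with jump discontinuities (that of $F_n$ at $-c_{1,n}-c_{2,n}$, that of $G(2)$ at $-2$), so both characteristic functions decay only like $1/\abs{t}$; since the jumps sit at different points, the phases $e^{-i(c_{1,n}+c_{2,n})t}$ and $e^{-2it}$ decorrelate once $\abs{t}\gtrsim n^2$ (note $2-c_{1,n}-c_{2,n}\approx \tfrac{1}{4n^2}$), and there the difference of characteristic functions is of order $1/\abs{t}$ on average, so even the unweighted integral diverges; in the intermediate range $1\ll\abs{t}\ll n^2$ the weighted integrand is of order $\abs{t}/n^2$, not $n^{-2}$. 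Moreover, even where finite, a $t$-weighted $L^1$ bound on characteristic functions controls the sup-norm of the density difference, not its $L^1$ norm (which is $2d_{TV}$); spatial decay, i.e.\ derivatives in $t$, would be needed, and these run into the same tail problem. The contribution of the support mismatch on $(-2,-c_{1,n}-c_{2,n})$, where only $G(2)$ has mass, is a real effect of order $n^{-2}$ and is invisible to any uniform-in-$t$ estimate. The paper's proof is instead a direct computation: it writes the density of $F_n$ in closed form as a convolution, expressed through the confluent hypergeometric function $\confhyper(\tfrac12,1,\cdot)$, decomposes $2\,d_{TV}(F_n,G(2))=\alpha_1(n)+\alpha_2(n)$ with $\alpha_1(n)=1-e^{(c_{1,n}+c_{2,n})/2-1}$ being exactly the support-mismatch term, and extracts $\alpha_1(n)\approx_C\alpha_2(n)\approx_C n^{-2}$ by series expansion, which yields the two-sided asymptotics in one stroke.

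Your lower bound has a gap of the same nature. The statement ``total variation dominates the sup-distance between characteristic functions on any compact set'' is correct only for \emph{real} compact sets, where $\abs{\E e^{itF_n}-\E e^{itG(2)}}\le 2\,d_{TV}(F_n,G(2))$ because $e^{itx}$ is bounded. The Cauchy-estimate machinery of Theorem \ref{thm:optimal} requires control of $\phi_{F_n}(z)-\phi_{G(2)}(z)$ on a complex disk, and the paper's proof gets this by expanding in the derivatives $\phi^{(k)}_{F_n}(t)-\phi^{(k)}_{G(\nu)}(t)$, i.e.\ by testing against the \emph{unbounded} functions $x^k e^{itx}$; this is precisely the point where that proof uses the metric $d_2$ and cannot be run with $d_{TV}$. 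To push it through with $d_{TV}$ you would need either truncation plus the uniform exponential tails (which loses logarithmic factors, giving $d_{TV}\gtrsim n^{-2}/\log^4 n$ rather than $n^{-2}$) or a two-constants/harmonic-measure interpolation from the real axis into the disk (which loses a power of $d_{TV}$). The clean inequality $\mathbf{M}(F_n)\le_C d_{TV}(F_n,G(2))$ that your argument needs is in fact (one half of) the paper's open Conjecture \ref{con:<>}; the paper never proves it, and in Proposition \ref{Prop:concrete-example} the lower bound on $d_{TV}$ is read off from the explicit density formula, not deduced from the cumulants.
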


\begin{proof}
	First note that $$
\kappa_4(F_n) - \kappa_4(G(2)) = 48 \sum_{j=1}^{2} \left( c_{j,n}^4 - 1 \right) = 48\,  \frac{2}{n^2} \approx_C \frac{1}{n^2}.$$ Similarly $
\kappa_3(F_n) - \kappa_3(G(2)) = 8 \sum_{j=1}^{2} \left( c_{j,n}^3 - 1 \right) \approx_C \frac{1}{n^2}$. To shorten notation, we write $c_1$ and $c_2$ instead of $c_{1,n}$ and $c_{2,n}$. We start by computing $\phi_n$, the probability density function of $F_n$. The density of $F_n$ is given by
	\begin{align*}
	\phi_n(x) = {} & \frac{1}{2 \pi \sqrt{c_1 c_2}} \, \int\limits_{-c_1}^{x+c_2} \frac{e^{-\frac{1}{2} \left( \frac{t + c_1}{c_1} + \frac{x - t + c_2}{c_2}  \right) }}{\sqrt{t + c_1} \, \sqrt{x - t + c_2}} \, dt \, \ind{x > - c_1 - c_2}(x) \\
	{} = {} & \frac{1}{2 \pi \sqrt{c_1 c_2}} \, e^{-  1 - \frac{x}{2 c_2} } \, \int\limits_{-c_1}^{x+c_2} \frac{e^{\frac{t}{2} \left( \frac{1}{c_2} - \frac{1}{c_1} \right)}}{\sqrt{t + c_1} \, \sqrt{x - t + c_2}} \, dt \, \ind{x > - c_1 - c_2}(x).
	\end{align*}
	Substituting $t=(c_1 + c_2 + c) u - c_1$, we get
	\begin{align*}
	\phi_n(x) = {} & \frac{1}{2 \pi \sqrt{c_1 c_2}} \, e^{-  1 - \frac{x}{2 c_2} } \, \int\limits_{0}^{1} \frac{ e^{\frac{(c_1 - c_2) (c_1 + c_2 + x) u}{2 c_1 c_2} - \frac{c_1 - c_2}{2 c_2} } (x+ c_1 + c_2) }{\sqrt{u(x+c_1 +c_2)} \, \sqrt{(1-u) (x+ c_1 + c_2)}} \, du \, \ind{x > - c_1 - c_2}(x) \\
	{} = {}      & \frac{1}{2 \sqrt{c_1 c_2}} \, e^{ - \frac{x + c_1}{2 c_2} - \frac{1}{2} } \times \frac{1}{\pi} \int\limits_{0}^{1} e^{\frac{(c_1 - c_2) (c_1 + c_2 + x) }{2 c_1 c_2} \, u} \, \frac{1}{\sqrt{u}} \, \frac{1}{\sqrt{1-u}} \, du \, \ind{x > - c_1 - c_2}(x)                         \\
	{} = {}      & \frac{1}{2 \sqrt{c_1 c_2}} \, e^{ - \frac{x + c_1}{2 c_2} - \frac{1}{2} } \times \confhyper\Big(\frac{1}{2},1, \frac{c_1 - c_2}{2 c_1 c_2} (c_1+c_2+x) \Big) \times \ind{x > - c_1 - c_2}(x).
	\end{align*}
	Here, $\confhyper$ is the confluent hypergeometric function, which can be represented as
	\[ \confhyper(a,b,z) = {\frac  {\Gamma (b)}{\Gamma (a)\Gamma (b-a)}}\int _{0}^{1}e^{{zu}}u^{{a-1}}(1-u)^{{b-a-1}}\,du \]
	for $\Re(b) > \Re(a) > 0$. Note that if $a= 1/2$ and $b=1$, we get
	\[ \confhyper \Big(\frac{1}{2},1,z \Big) = {\frac{\Gamma (1)}{\Gamma (\frac{1}{2})^2}}\int _{0}^{1} e^{zu} \frac{1}{\sqrt{u}} \, \frac{1}{\sqrt{1-u}} \,du =  {\frac{1}{\pi} }\int _{0}^{1} e^{zu} \frac{1}{\sqrt{u}} \, \frac{1}{\sqrt{1-u}} \,du. \]
	Also note that the roles of $c_1$ and $c_2$ are completely interchangeable. It is just a matter of how we write down the convolution. Thus we can also write
	\[
	\phi_n(x) = \frac{1}{2 \sqrt{c_1 c_2}} \, e^{ - \frac{x + c_1 + c_2}{2 c_1} } \times \confhyper\Big(\frac{1}{2},1, - \frac{c_1 - c_2}{2 c_1 c_2} (c_1+c_2+x) \Big) \times \ind{x > - c_1 - c_2}(x).
	\]
	Also recall that the density of the target $G(2)$ is given by
	\[ \psi(x) = \frac{1}{2} e^{-\frac{x}{2} -1 } \ind{x > -2}(x). \]
	The next step is to explicitly write down the total variation distance in terms of the density functions:
	\begin{align*}
	d_{TV}(F_n, G(2)) & = \frac{1}{2} \int_{-\infty}^{\infty} \abs{ \phi_n(x) - \psi(x)  } \, dx \\
	& = \frac{1}{2} \int_{- 2}^{-c_1 - c_2} \psi(x) \, dx + \frac{1}{2} \int_{- c_1 - c_2}^{\infty} \phi_n(x) - \psi(x) \,  dx \\
	& = \frac{1}{2} \left( 1- e^{\frac{c_1 + c_2}{2} -1} \right) + \frac{1}{2} \int_{- c_1 - c_2}^{\infty} \phi_n(x) - \psi(x) \,  dx \\
	& =: \frac{1}{2} \big( \alpha_1(n) + \alpha_2(n) \big).
	\end{align*}
	One can readily check that $\alpha_1(n) \approx_C \frac{1}{n^2}$. To examine the asymptotic behaviour of $\alpha_2(n)$, we write
	\begin{align*}
	& \phi_n(x) - \psi(x) \\
	& \qquad = \frac{1}{2} \, e^{-\frac{x}{2}} \left[ \frac{1}{\sqrt{c_1 c_2}} \, e^{- \frac{c_1+c_2}{2 c_1}} \, e^{x\left( \frac{1}{2} - \frac{1}{2 c_1} \right)} \, \confhyper\Big(\frac{1}{2},1, - \frac{c_1 - c_2}{2 c_1 c_2} (c_1+c_2+x) \Big) - e^{-1} \right], \\
	\end{align*}
	and find a series expansion for the term inside the square brackets.
	Expanding $\confhyper$ as a series (see e.g. \cite[p.~504]{Abramowitz.Stegun}), we get
	\[
	\confhyper\Big(\frac{1}{2},1, - \frac{c_1 - c_2}{2 c_1 c_2} (c_1+c_2+x) \Big) = \frac{1}{\sqrt{\pi}} \sum_{k=0}^{\infty} \frac{\Gamma( \frac{1}{2} +k)}{\Gamma(1+k)} \frac{\left[ - \frac{c_1 - c_2}{2 c_1 c_2} (c_1+c_2+x)  \right]^k}{k!}.
	\]
	On the other hand, we can expand the exponential around $-c_1-c_2$ as
	\[
	e^{x\left( \frac{1}{2} - \frac{1}{2 c_1} \right)} = e^{( -c_1 - c_2 ) \left( \frac{1}{2} - \frac{1}{2 c_1} \right)} \sum_{k=0}^{\infty} \frac{ \left( \frac{1}{2} - \frac{1}{2 c_1} \right)^k ( x + c_1 +c_2 )^k }{k!}.
	\]
	Thus, we obtain the following series expansion
	\begin{align*}
	& e^{x\left( \frac{1}{2} - \frac{1}{2 c_1} \right)} \times \confhyper\Big(\frac{1}{2},1, - \frac{c_1 - c_2}{2 c_1 c_2} (c_1+c_2+x) \Big) \\
	{}={} & \frac{e^{( -c_1 - c_2 ) \left( \frac{1}{2} - \frac{1}{2 c_1} \right)} }{\sqrt{\pi}}  \sum_{k=0}^{\infty} \sum_{\ell=0}^{k} \frac{\Gamma( \frac{1}{2} + \ell)}{\Gamma(1+\ell)} \frac{\left[ - \frac{c_1 - c_2}{2 c_1 c_2} (c_1+c_2+x)  \right]^{\ell}}{\ell!} \frac{ \left[ \left( \frac{1}{2} - \frac{1}{2 c_1} \right) ( x + c_1 +c_2 ) \right]^{k-\ell} }{(k-\ell)!} \\
	{}={} & \frac{e^{( -c_1 - c_2 ) \left( \frac{1}{2} - \frac{1}{2 c_1} \right)} }{\sqrt{\pi}}  \sum_{k=0}^{\infty} \sum_{\ell=0}^{k} \frac{\Gamma( \frac{1}{2} + \ell)}{\Gamma(1+\ell)} \frac{ \left( - \frac{c_1 - c_2}{c_1 c_2} \right)^{\ell} \left( 1-\frac{1}{c_1} \right)^{k-\ell}}{2^k\, \ell! \, (k-\ell)!} \, (x+c_1 + c_2)^k \\
	{}=: &  \sum_{k=0}^{\infty} A_k(c_1,c_2) (x+c_1+c_2)^k .
	\end{align*}
	Now
	\begin{align*}
	2 \, \alpha_2(n) & = \int_{-c_1 - c_2 }^{\infty} e^{-\frac{x}{2}}  \left[ \frac{1}{\sqrt{c_1 c_2}} \, e^{- \frac{c_1+c_2}{2 c_1}} \, \left( \sum_{k=0}^{\infty} A_k(c_1,c_2) (x+c_1+c_2)^k \right) - e^{-1} \right] \, dx \\
	& = \left( \frac{e^{- \frac{1}{2} (c_1+c_2)  } }{\sqrt{c_1 c_2}} -e^{-1} \right) \int_{-c_1 - c_2 }^{\infty} e^{-\frac{x}{2}}  \, dx \\
	& \quad + \sum_{k=1}^{\infty}  \frac{1}{\sqrt{c_1 c_2}} \, e^{- \frac{c_1+c_2}{2 c_1}} \, A_k(c_1,c_2) \int_{-c_1 -c_2}^{\infty} e^{-\frac{x}{2}} (x+c_1+c_2)^k \, dx.
	\end{align*}
	Using the fact that $\int_{-c_1 - c_2}^{\infty} \exp(-\frac{x}{2}) (x+c_1 + c_2)^k = k! \, 2^{k+1} \exp( \frac{c_1+c_2}{2})$ for all $k \in \N_0$,
	and setting
	\begin{align*}
	B_1(c_1,c_2) & := 2 \, e^{\frac{c_1 + c_2}{2} } \qquad \text{and} \\
	B_2(c_1,c_2,k) & :=  \frac{1}{\sqrt{c_1 c_2}} \, e^{- \frac{c_1+c_2}{2 c_1}} \times k! \, 2^{k+1} e^{ \frac{c_1+c_2}{2}} \times  \frac{e^{( -c_1 - c_2 ) \left( \frac{1}{2} - \frac{1}{2 c_1} \right)} }{\sqrt{\pi}\, 2^k}  = \frac{2 \, k!}{\sqrt{c_1 c_2} \sqrt{\pi}},
	\end{align*}
	we get
	\begin{align*}
	2 \, \alpha_2(n) {}={} & B_1(c_1,c_2)  \left( \frac{e^{- \frac{1}{2} (c_1+c_2) } }{\sqrt{c_1 c_2}} -e^{-1} \right) \\
	&  + \sum_{k=1}^{\infty} B_2(c_1,c_2,k) \left( \, \sum_{\ell=0}^{k} \frac{\Gamma( \frac{1}{2} + \ell)}{\Gamma(1+\ell)} \frac{ \left( - \frac{c_1 - c_2}{c_1 c_2} \right)^{\ell} \left( 1-\frac{1}{c_1} \right)^{k-\ell}}{\ell! \, (k-\ell)!} \right),
	\end{align*}
	where $B_1(c_1,c_2)$ and $B_2(c_1,c_2,k)$ converge (for fixed $k$) to a positive constant as $n \to \infty$, and thus do not contribute to the rate of convergence.	One can easily check that $\frac{e^{- \frac{1}{2} (c_1+c_2) } }{\sqrt{c_1 c_2}} -e^{-1} \approx_C \frac{1}{n^2} $ as $n \to \infty$. All the other terms are of the form ``something that converges to a constant'' $\times$ ``a polynomial in $c_1$ and $c_2$''. However, the terms for $k=1$ and $k=2$ also have the same rate of convergence, whereas the terms for $k\geq 3$ converge to zero at a faster rate. More precisely, we have
	\begin{align*}
	2 \, \alpha_2(n) & = 2 \left( \frac{1}{\sqrt{c_1 c_2}} - e^{\frac{c_1+c_2}{2} -1 }\right) + \frac{1}{\sqrt{c_1 c_2}c_1 c_2} \left( 2c_1c_2 - c_1 -c_2\right) \\
	& \qquad + \frac{1}{4 \sqrt{c_1 c_2}c_1^2 c_2^2} \left(8 c_1^2 c_2^2 - 8 c_1^2 c_2 + 3 c_1^2 - 8 c_1 c_2^2 + 2 c_1c_2 + 3 c_2^2\right) \\
	& \qquad + \sum_{k=3}^{\infty} B_2(c_1,c_2,k) \left( \, \sum_{\ell=0}^{k} \frac{\Gamma( \frac{1}{2} + \ell)}{\Gamma(1+\ell)} \frac{ \left( - \frac{c_1 - c_2}{c_1 c_2} \right)^{\ell} \left( 1-\frac{1}{c_1} \right)^{k-\ell}}{\ell! \, (k-\ell)!} \right) \\
	& =: C(c_1,c_2) + \sum_{k=3}^{\infty} B_2(c_1,c_2,k) \left( \, \sum_{\ell=0}^{k} \frac{\Gamma( \frac{1}{2} + \ell)}{\Gamma(1+\ell)} \frac{ \left( - \frac{c_1 - c_2}{c_1 c_2} \right)^{\ell} \left( 1-\frac{1}{c_1} \right)^{k-\ell}}{\ell! \, (k-\ell)!} \right).
	\end{align*}
	After some computations, we see that, as $n \to \infty$,	$ \frac{C(c_1,c_2)}{1/n^2} \to 1$, whereas the remaining terms converge faster.
	
\end{proof}

\section{Gamma Characterisation Within the Second Wiener Chaos}\label{sec:gamma-charac}


Let $\nu >0$ and $G(\nu)$ be a centered Gamma distributed random variable. Assume that $F$ is a random element in the second Wiener chaos such that $\E[F^2]= 2 \nu$. The proof of Proposition \ref{Prop:H3-metric} reveals that 
\begin{equation}\label{eq:5}
\begin{split}
d_{2} \big( F,G(\nu) \big) \le_C \bigg\{  \abs[\big]{ \kappa_3(F) - \kappa_3 (G(\nu)) } +\sqrt{ \Var \left( \Gamma_2 (F) - 2 \Gamma_1 (F) \right)} \bigg\}.
\end{split}
\end{equation}
 From observation \eqref{eq:5}, it appears that the sole condition $\Var \left( \Gamma_2 (F) - 2 \Gamma_1 (F) \right)=0$ may not be enough to conclude that the random variable $F$ is distributed like $G (\nu)$, and in addition, one needs to match the third cumulants $\kappa_3 (F)= \kappa_3 (G(\nu))$.   A simple example \textbf{outside} the second Wiener chaos is $F \sim \mathscr{N}(0,1)$. Then, obviously, $\Var \left( \Gamma_2 (F) - 2 \Gamma_1 (F) \right)=0$, but $F$ is not Gamma distributed. Note that $\kappa_3(F)=0$, whereas $\kappa_3 (G(\nu)) \neq 0$. The next lemma clarifies that this is not the case.

\begin{lem}\label{lem:chain}
Let $r \ge 0$ be an integer and suppose that $F= I_2 (f)$ belongs to the second Wiener chaos, such that $\E[F^2]=2\nu$, where $\nu > 0$. Then 
\[
F \stackrel{\text{law}}{=} G(\nu) \quad \text{if and only if} \quad \Delta_{r}(F) := \Var\left( \Gamma_{r+1}(F) - 2 \Gamma_r(F) \right) =0, \, \textbf{ for some } \, r \ge 0.
\] 
Moreover
\begin{equation}\label{eq:chain}
\cdots \le_C \Delta_{r+1}(F) \le_C \Delta_{r}(F) \le_C \Delta_{r-1} (F) \le_C \cdots \le_C \Delta_0 (F) = \Var \left( \Gamma_{1}(F) - 2 F \right). 
\end{equation}
\end{lem}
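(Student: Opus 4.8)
The plan is to reduce the entire statement to the two explicit eigenvalue formulas already in hand: Lemma~\ref{lem:Var(Gamma_r-2Gamma_r-1)}, which records that
$\Delta_r(F)=\Var(\Gamma_{r+1}(F)-2\Gamma_r(F))=2^{2r+3}\sum_{i\ge1}c_{f,i}^{2r+2}(c_{f,i}-1)^2$, and Lemma~\ref{lem:r+1<r}, which supplies the monotonicity. First I would observe that the chain \eqref{eq:chain} is nothing but relation \eqref{eq:1} of Lemma~\ref{lem:r+1<r} rewritten in the $\Delta$-notation: since $\Var(\Gamma_{r+1}(F)-2\Gamma_r(F))=\Delta_r(F)$ and $\Var(\Gamma_r(F)-2\Gamma_{r-1}(F))=\Delta_{r-1}(F)$, relation \eqref{eq:1} reads exactly $\Delta_r(F)\le(4\nu)\,\Delta_{r-1}(F)$ for every $r\ge1$, with boundary term $\Delta_0(F)=\Var(\Gamma_1(F)-2F)$. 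Iterating yields the full descending chain with constant $C=4\nu$.

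For the equivalence I would argue directly from the nonnegative series above, whose every summand is of the form $c_{f,i}^{2r+2}(c_{f,i}-1)^2\ge0$. For the reverse implication, suppose $\Delta_r(F)=0$ for some fixed $r\ge0$. Then every summand vanishes, forcing each eigenvalue into $\{0,1\}$; note that this conclusion is uniform in $r$, so a single index suffices. Using $\kappa_2(F)=2\sum_i c_{f,i}^2=2\nu$ from \eqref{eq:SecondWienerChaosCumulantFormulaEigenvalues}, the number of eigenvalues equal to $1$ is exactly $\nu$, so $\nu$ is a positive integer and the representation \eqref{eq:SecondWienerChaosEigenvalueRepresentation} collapses to $F=\sum_{i=1}^{\nu}(N_i^2-1)\stackrel{\mathcal D}{=}G(\nu)$. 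For the forward implication it is enough to exhibit one vanishing $\Delta_r$, and $r=0$ is the cheapest choice: by Lemma~\ref{lem:Var(Gamma_r-2Gamma_r-1)} with $r=1$ one has $\Delta_0(F)=\tfrac16\kappa_4(F)-2\kappa_3(F)+4\kappa_2(F)$, and substituting the centered Gamma cumulants \eqref{eq:CenteredGammaCumulants} ($\kappa_2=2\nu,\ \kappa_3=8\nu,\ \kappa_4=48\nu$) gives $8\nu-16\nu+8\nu=0$; since $F\stackrel{\mathcal D}{=}G(\nu)$ shares all cumulants, $\Delta_0(F)=0$. Alternatively one may invoke Proposition~\ref{prop:OnlyIntegerNuPossible}, applied to the constant sequence $F_n\equiv F$, to read off the same eigenvalue structure.

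There is essentially no hard step: the substance was spent in Lemmas~\ref{lem:Var(Gamma_r-2Gamma_r-1)} and~\ref{lem:r+1<r}. The only point worth emphasizing, and the reason the equivalence is ``astonishing'' in the sense of Remark~\ref{rem:Gamma-2-cumulant}, is conceptual rather than technical: the heuristic reading of \eqref{eq:5} suggests that killing $\Var(\Gamma_2(F)-2\Gamma_1(F))$ should leave a residual obstruction in $\kappa_3$, exactly as the Gaussian example $F\sim\mathscr N(0,1)$ exhibits outside the second chaos. Within the second Wiener chaos this cannot happen, because $\Delta_r(F)=0$ already pins every eigenvalue to $\{0,1\}$ and hence forces \emph{all} cumulants of $F$ to coincide with those of $G(\nu)$ at once, so no separate third-cumulant matching is required. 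I would close by noting that this same eigenvalue collapse is what renders the choice of $r$ immaterial, justifying the weakening of the hypothesis to ``for some $r\ge0$''.
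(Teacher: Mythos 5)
Your proof is correct and, on the crucial reverse implication, genuinely more direct than the paper's own argument. The parts that coincide: you obtain the chain \eqref{eq:chain} exactly as the paper does, by rewriting estimate \eqref{eq:1} of Lemma \ref{lem:r+1<r} as $\Delta_r(F)\le 4\nu\,\Delta_{r-1}(F)$ for $r\ge1$, and your forward implication (cumulants of $F$ match those of $G(\nu)$, hence $\Delta_0(F)=\frac{1}{6}\kappa_4(F)-2\kappa_3(F)+4\kappa_2(F)=8\nu-16\nu+8\nu=0$, then descend the chain) is just the paper's ``well-known'' step made explicit. The difference lies in proving that $\Delta_r(F)=0$ for a \emph{single} $r$ forces $F\stackrel{\mathcal{D}}{=}G(\nu)$: the paper computes only the case $r=1$ by hand, expanding $\Delta_1(F)$ in cumulants and recognizing a nonnegative eigenvalue sum, and then disposes of $r\ge2$ by invoking the trace inequalities of Lemma \ref{lem:V-etimate} to transfer the vanishing back to low index; you instead apply the spectral identity of Lemma \ref{lem:Var(Gamma_r-2Gamma_r-1)} at index $r+1$, namely $\Delta_r(F)=2^{2r+3}\sum_{i}c_{f,i}^{2r+2}(c_{f,i}-1)^2$, valid for every $r\ge0$, so that vanishing at any one $r$ pins each eigenvalue into $\{0,1\}$; the normalization $\sum_i c_{f,i}^2=\nu$ then forces $\nu=\#\{i : c_{f,i}=1\}$ to be a positive integer and $F\stackrel{\mathcal{D}}{=}G(\nu)$ to be a centered $\chi^2$. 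Your route treats all $r\ge0$ uniformly (including $r=0$, which the paper implicitly delegates to the classical Nourdin--Peccati characterization) and renders Lemma \ref{lem:V-etimate} superfluous for this lemma; what the paper's longer detour buys is the display of non-asymptotic variance inequalities it advertises as being of independent interest. Since both arguments are confined to the second Wiener chaos, where the eigenvalue representation is available, nothing in generality is lost by your simplification.
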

\begin{proof}
The chain of estimates in \eqref{eq:chain} follows from Lemma \ref{lem:r+1<r}. Also, it is well known that if $F \sim CenteredGamma(\nu)$, then $\Delta_0 (F) =0$, and therefore $\Delta_r (F) =0$ for all $r \ge 1$. For the other direction let 
\[
F = \sum_{i \ge 1} c_i (N^2_i -1), \quad \text{and} \quad \E[F^2]= 2 \sum_{i \ge 1} c^2_i =2 \nu.
\]
Assume that
\begin{align*}
0= \Delta_1 (F) = \Var & \left( \Gamma_2 (F) - 2 \Gamma_1 (F) \right)= \frac{\kappa_6(F)}{5!} - \frac{4}{4!} \kappa_5(F) + \frac{4}{3!} \kappa_4 (F)\\
&= 2^5 \sum c^6_i - 4 \times 2^4 \sum c^5_i + 4 \times 2^3 \sum c^4_i \\
&= 2 \times \sum \left( 2^5  c^6_i - 4 \times 2^4 c^5_i + 4 \times 2^3 c^4_i \right)\\
&= \sum \left( 2^3 c^3_i - 2^3 c^2_i \right)^2.
\end{align*}
Hence, we either have $c_i =0$ for all $i \ge 1$, which is impossible, or $c_i =1$ for all $i \ge 1$. Now together with the condition $ 2 \sum_{i \ge 1} c^2_i =2 \nu$, we can deduce that there are only finitely many non-zero coefficients $c_i$, and moreover that $\nu$ is an integer. Hence
\[
F = \sum_{i=1}^{\nu} (N^2_i -1).
\]
The general case $r \ge 2$ follows from Lemma \ref{lem:V-etimate}.
\end{proof}

For $r,\lambda  \in (0,\infty)$, in what follows, we denote by $\Gamma(r,\lambda)$, the  \textit{Gamma} distribution with \textit{shape parameter} $r$, and \textit{rate} $\lambda$, which has the following probability density function 
\begin{equation*}
p_{r,\lambda} (x) = \begin{cases} \frac{\lambda^r}{\Gamma(r)} x^{r-1} e^{-\lambda x}, & \text{ if } x \ge 0,\\
0, & \text{otherwise},
\end{cases}
\end{equation*}
where $\Gamma(r)$ stands for \textit{Gamma} function. The side goal of the next lemma is to provide other sufficient conditions for the validity of variance-estimate \eqref{variance-estimate-2}.  
\begin{lem}\label{lem:3<1??}
Let $\nu > 0$. Assume that $F= I_2 (f)$ is in the second Wiener chaos such that $\E[F^2]=2\nu$, and moreover $\text{Cov}\left( \Gamma_3 (F) - 2 \Gamma_2 (F), \Gamma_2 (F) - 2 \Gamma_1 (F)\right) \ge 0$ (see Remark \ref{rem:3<1} item (iv)). For $\beta \in \R$, define the \textit{biquadratic} function 
\[
\Phi_F (\beta):= \Var \Big( \left( \Gamma_3 (F) - 2 \Gamma_2 (F) \right) -2 \beta^2 \left( \Gamma_2(F) -2 \Gamma_1(F) \right) \Big).
\] 
Then, function $\Phi_F$ attains its local maximum at $\beta_{\text{max}}=0$, and the global minimum at the points
\[
\beta^{\pm}_{\text{min}} = \pm \sqrt{\frac{\E \big[ \left( \CenteredGamma_3(F) - 2 \CenteredGamma_2(F) \right)  \left( \CenteredGamma_2(F) - 2 \CenteredGamma_1 (F) \right) \big] }{ 2 \Var \left( \Gamma_2 (F) - 2 \Gamma_1 (F) \right)}}.
\] 
Also, the following statements are equivalent.
\begin{itemize}
\item[(a)]  $F \stackrel{\mathcal{D}}{=} G(\nu)$.
\item[(b)]  $\Phi_F(\beta)=0$, for some $\beta \neq \beta^{\pm}_{\text{min}}$.
\end{itemize}
In particular, $\Phi_F(1)=0$ implies that $F \stackrel{\mathcal{D}}{=} G(\nu)$, and
\begin{equation}\label{eq:beta=1}
0 \le \Phi_F(1) =  \frac{\kappa_8 (F)}{7!} - 8 \frac{\kappa_7 (F)}{6!} + 24 \frac{\kappa_6 (F)}{5!} - 32 \frac{\kappa_5 (F)}{4!} +16 \frac{\kappa_4 (F)}{3!} \le 2   \Var^{\, 2} \left( \Gamma_1 (F) - 2 F \right).
\end{equation}
Let 
\begin{equation}\label{eq:beta0}
\beta_0 := \pm  \sqrt{\frac{\E \big[ \left( \CenteredGamma_3(F) - 2 \CenteredGamma_2(F) \right)  \left( \CenteredGamma_2(F) - 2 \CenteredGamma_1 (F) \right) \big] }{  \Var \left( \Gamma_2 (F) - 2 \Gamma_1 (F) \right)}}.
\end{equation}
\begin{itemize}
\item[$\mathbf{(I)}$] In the case $\beta_0 \le 1$,
\begin{equation*}
\Var \left( \Gamma_3 (F) - 2 \Gamma_2 (F) \right) \le 2 \Var^{ \, 2} \left( \Gamma_1 (F) - 2 F \right).
\end{equation*}
\item[$\mathbf{(II)}$] In the case $\beta_0 \ge 1$,
\begin{align*}
 \Phi_F(\beta^-_{\text{min}}) &=  \Phi_F(\beta^+_{\text{min}})  = \Var \left( \Gamma_3 (F)  - 2 \Gamma_2 (F) \right)  \\
 & \hskip3cm - \frac{ \Big(\E [ \left( \CenteredGamma_3(F) - 2 \CenteredGamma_2(F) \right)  \left( \CenteredGamma_2(F) - 2 \CenteredGamma_1 (F) \right)] \Big)^2}{\Var  \left( \Gamma_2(F) - 2 \Gamma_1 (F) \right)}\\
 & \le \Var \Big( \left( \Gamma_3 (F) - 2 \Gamma_2 (F) \right) -2  \left( \Gamma_2(F) -2 \Gamma_1(F) \right) \Big) \le \Var \left( \Gamma_3 (F) - 2 \Gamma_2 (F) \right).
\end{align*}
\end{itemize}
Lastly, $\Phi_F(\beta^{\pm}_{\text{min}})=0$ implies that $F \sim \left[ 2 \Gamma( \frac{\ell_1}{2}, \frac{2}{k}) -\frac{\ell_1}{2k} \right] \star \left[ 2 \Gamma(\frac{\ell_2}{2},1) - \ell_2 \right]$, where $k=\beta^2_0$, $\beta_0$ is given by \eqref{eq:beta0}, and the operation $\star$ stands for convolution. $(\ell_1,\ell_2) \in \N^2_0$ are such that $\ell_1 \frac{k^2}{4} + \ell_2 =\nu$, i.e. 
\begin{equation}\label{eq:mixed-gamma}
F \stackrel{\text{law}}{=} \sum_{i=1}^{\ell_1} \frac{k}{2} (N^2_i -1 ) + \sum_{j=1}^{\ell_2} (N^2_j -1),
\end{equation}
where $\{N_i, N_j \,  : \, 1 \le i \le \ell_1 , 1 \le j \le \ell_2\}$ are i.i.d. $\mathscr{N}(0,1)$ variables with the convention that, when $\ell_1=1$ or $\ell_2 =0$, the corresponding sum is understood as $0$.
\end{lem}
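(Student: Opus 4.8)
The plan is to reduce the entire statement to the analysis of a single real quartic. Setting $X := \Gamma_3(F) - 2\Gamma_2(F)$ and $Y := \Gamma_2(F) - 2\Gamma_1(F)$, and noting that centering changes neither variances nor covariances, I would write $\Phi_F(\beta) = \Var(X) - 4\beta^2\Cov(X,Y) + 4\beta^4\Var(Y) =: a - 4b\beta^2 + 4c\beta^4$. The three coefficients are computed exactly as in Lemma \ref{lem:Var(Gamma_r-2Gamma_r-1)}, via \eqref{eq:CentredGammaInTermsOfContraction}, the isometry \eqref{eq:IsometryProperty} and \eqref{eq:SecondWienerChaosCumulantFormulaEigenvalues}: one obtains $a = 2^7\sum_i c_i^6(c_i-1)^2$, $c = 2^5\sum_i c_i^4(c_i-1)^2$, and, using $\langle f\contIterated{1}{p}f, f\contIterated{1}{q}f\rangle_{\HH^{\otimes2}} = \Tr(A_f^{p+q})$, the value $b = \Cov(X,Y) = 2^6\Tr\big(A_f^5(A_f-\Id)^2\big) = 2^6\sum_i c_i^5(c_i-1)^2 \ge 0$, the last inequality being the standing hypothesis. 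This already identifies $\beta_0^2 = b/c$ and the minimiser $\beta_{\min}^{\pm} = \pm\sqrt{b/(2c)}$.

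The local/global extremum claims are then elementary calculus for $\beta\mapsto a-4b\beta^2+4c\beta^4$: the derivative $8\beta(2c\beta^2-b)$ vanishes at $0$ and at $\pm\sqrt{b/(2c)}=\beta_{\min}^{\pm}$, and since $b\ge0$ the second-derivative test gives a local maximum at $0$ and global minima at $\beta_{\min}^{\pm}$, with minimal value $a-b^2/c$. For the equivalence of (a) and (b) I would distinguish on $c=\Var(Y)$. If $c>0$, the quartic is a nondegenerate nonnegative parabola in $t=\beta^2$ whose only possible zero is its vertex $t=b/(2c)$, so $\Phi_F$ can vanish only at $\beta_{\min}^{\pm}$; hence (b) forces $c=0$, i.e.\ $\Delta_1(F)=0$, and Lemma \ref{lem:chain} gives $F\stackrel{\mathcal D}{=}G(\nu)$. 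Conversely, if $F\stackrel{\mathcal D}{=}G(\nu)$ then every $\Delta_r(F)=0$ by \eqref{eq:chain}, so $X,Y$ are a.s.\ constant and $\Phi_F\equiv0$.

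The computational heart is the factorisation $\Phi_F(1)=a-4b+4c=2^7\sum_i c_i^4(c_i-1)^4$, obtained by collecting the three power-sums and using $c_i^6-2c_i^5+c_i^4=c_i^4(c_i-1)^2$. Rewriting $\sum_i c_i^p=\kappa_p(F)/(2^{p-1}(p-1)!)$ yields the cumulant expression in \eqref{eq:beta=1}; nonnegativity is manifest, and the upper bound $\Phi_F(1)\le2\Var^2(\Gamma_1(F)-2F)$ is just $\sum_i a_i^2\le(\sum_i a_i)^2$ with $a_i=c_i^2(c_i-1)^2\ge0$, recalling $\Var(\Gamma_1(F)-2F)=2^3\sum_i c_i^2(c_i-1)^2$. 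The same identity gives $\Phi_F(1)=0\Rightarrow c_i\in\{0,1\}$ for all $i\Rightarrow F\stackrel{\mathcal D}{=}G(\nu)$. Cases $\mathbf{(I)}$ and $\mathbf{(II)}$ follow algebraically from $a=\Phi_F(1)+4(b-c)$ and $\Phi_F(1)-\Phi_F(0)=4(c-b)$: when $\beta_0\le1$ (i.e.\ $b\le c$) one gets $a\le\Phi_F(1)\le2\Var^2(\Gamma_1(F)-2F)$, while when $\beta_0\ge1$ (i.e.\ $b\ge c$) one gets $\Phi_F(1)\le\Phi_F(0)=a$, and combining with the global-minimum value $a-b^2/c=\Phi_F(\beta_{\min}^{\pm})\le\Phi_F(1)$ produces the displayed chain.

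The main obstacle is the concluding structure theorem, the only place where more than algebra is used. Here $\Phi_F(\beta_{\min}^{\pm})=0$ is precisely the Cauchy--Schwarz equality $b^2=ac$, so $X-kY$, with $k=\beta_0^2=2(\beta_{\min}^{\pm})^2$, has zero variance and is a.s.\ constant; subtracting means, $(\CenteredGamma_3(F)-2\CenteredGamma_2(F))-k(\CenteredGamma_2(F)-2\CenteredGamma_1(F))$ is a centered element of the second Wiener chaos that is a.s.\ constant, hence a.s.\ $0$. Injectivity of $I_2$ then upgrades this to the kernel identity $2\big(f\contIterated{1}{4}f-f\contIterated{1}{3}f\big)=k\big(f\contIterated{1}{3}f-f\contIterated{1}{2}f\big)$, which upon diagonalising $A_f$ reads $c_i^2(c_i-1)(2c_i-k)=0$ for every $i$; thus each eigenvalue lies in $\{0,1,k/2\}$. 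Letting $\ell_1,\ell_2$ count the eigenvalues equal to $k/2$ and to $1$ — necessarily finitely many since $\sum_i c_i^2=\nu<\infty$ — produces the representation \eqref{eq:mixed-gamma}, with $\ell_1k^2/4+\ell_2=\nu$ forced by $\E[F^2]=2\nu$. The delicate points are justifying that a centered second-chaos variable of zero variance is genuinely the zero random variable (so that proportionality passes to the kernels, not merely to the laws), the bookkeeping of the cubic factor $c_i^2(c_i-1)(2c_i-k)$, and the degenerate values $k\in\{0,2\}$, for which $k/2\in\{0,1\}$ and the representation collapses back to $F\stackrel{\mathcal D}{=}G(\nu)$.
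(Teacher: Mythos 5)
Your proposal is correct and follows essentially the same route as the paper's own proof: both reduce every claim to the eigenvalue power-sum representations coming from $\CenteredGamma_r(F)=2^r I_2(f\contIterated{1}{r+1}f)$ and Lemma \ref{lem:Var(Gamma_r-2Gamma_r-1)}, both use nonnegativity of $\Phi_F$ together with the Cauchy--Schwarz equality case to settle (b)$\Rightarrow$(a) and the final structure theorem, and both invoke Lemma \ref{lem:chain} to conclude $F\stackrel{\mathcal{D}}{=}G(\nu)$. Your local variants --- the vertex argument for the quadratic in $t=\beta^2$, the direct factorization $\Phi_F(1)=a-4b+4c=2^7\sum_i c_i^4(c_i-1)^4$ in place of the paper's expansion of $\Var^{\,2}(\Gamma_1(F)-2F)$ into diagonal plus cross terms $B\ge 0$, the algebraic identity $\Phi_F(1)-\Phi_F(0)=4(c-b)$ instead of the paper's monotonicity argument for cases $\mathbf{(I)}$--$\mathbf{(II)}$, and passing in one step to the kernel identity $c_i^2(c_i-1)(2c_i-k)=0$ rather than iterating the proportionality over all $r\ge 2$ --- are streamlined presentations of the same computations, not a different method.
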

\begin{proof}
 It is straightforward to deduce that the function $\Phi_F$ attains its local maximum at $\beta_{\text{max}}=0$, and the global minimum at points $\beta^{\pm}_{\text{min}}$. Also 
 \begin{equation}\label{eq:beta-min}
\Phi _F( \beta^{\pm}_{\text{min}})
 =\Var \left( \Gamma_3 (F) - 2 \Gamma_2 (F) \right) - \frac{ \Big(\E [ \left( \CenteredGamma_3(F) - 2 \CenteredGamma_2(F) \right)  \left( \CenteredGamma_2(F) - 2 \CenteredGamma_1 (F) \right)] \Big)^2}{\Var  \left( \Gamma_2(F) - 2 \Gamma_1 (F) \right)}.
 \end{equation}
$(a) \Rightarrow (b)$: It is obvious that when $F \stackrel{\mathcal{D}}{=} G(\nu)$, then $\Phi_F (0)=0$. In fact $\Phi_F (\beta)=0$ for all $\beta \in \R$, because
\[
0 \le \Phi_F(\beta) \le 2  \Var \left( \Gamma_3 (F) - 2 \Gamma_2 (F) \right) + 8 \beta^4 \Var \left( \Gamma_2 (F) - 2 \Gamma_1 (F) \right).
\] 
$(b) \Rightarrow (a)$: Assume that $\Phi_F ( \beta' ) =0$ for some $\beta' \neq  \beta^{\pm}_{\text{min}}$. Since $\Phi_F \ge 0$, this implies that $\Phi _F( \beta^{\pm}_{\text{min}})=0$. Hence, relation \eqref{eq:beta-min} yields that the equality case happens in the Cauchy--Swartz inequality.  Therefore, for some constant $k$ (in fact $k = 2 (\beta^{\pm}_{\text{min}})^2=\beta^2_0$), we have 
\begin{equation}\label{eq:=a.s.}
\Gamma_3 - 2 \, \Gamma_2 \stackrel{\text{a.s.}}{=} k \left( \Gamma_2 - 2 \Gamma_1 \right).
\end{equation}
Hence, 
\[
\Phi_F( \beta)= (k-2\beta^2)^2 \Var \left( \Gamma_2 (F) - 2 \Gamma_1 (F) \right) = \left( 2 (\beta^{\pm}_{\text{min}})^2 - 2 \beta^2 \right)^2  \Var  \left( \Gamma_2(F) - 2 \Gamma_1 (F) \right).
\]
Now, assumption $\Phi_F ( \beta' ) =0$ tells us that $\Var  \left( \Gamma_2(F) - 2 \Gamma_1 (F) \right)=0$, and so $F$ is distributed like $G(\nu)$. To continue the rest of the proof, let $F = \sum_i c_i (N^2_i -1)$ for some sequence of real numbers $(c_i)_{i \ge 1}$ such that $\sum_i c^2_i < \infty$. Then we know that 
\[
2  \Var\left( \Gamma_1 (F) - 2 F \right)  = 2 \frac{\kappa_4(F)}{3!} - 4 \frac{\kappa_3 (F)}{2!} + 4 \kappa_2(F)
= \sum_i \left( 2^2 c^2_i - 2^2 c_i \right)^2.
\]
Also
\[
2  \Var \left( \Gamma_3 (F) - 2 \Gamma_2(F) \right)  =  2\frac{\kappa_8(F)}{7!} - 4 \frac{\kappa_7 (F)}{6!} + 4 \frac{\kappa_6(F)}{5!}= \sum_i \left( 2^4 c^4_i - 2^4 c^3_i \right)^2.
\]
Hence,
\begin{align*}
\Big( 2  \Var & \left( \Gamma_1 (F) - 2 F \right) \Big)^2 = \Big(  \sum_i \left( 2^2 c^2_i - 2^2 c_i \right)^2\Big)^2\\
& = \sum_i \left( 2^2 c^2_i - 2^2 c_i \right)^4 +  \sum_{i \neq j} \left( 2^2 c^2_i - 2^2 c_i \right)^2  \left( 2^2 c^2_j - 2^2 c_j \right)^2\\
& = : A + B.
\end{align*}
Note that $B \ge 0$. Now
\begin{align*}
A &= \sum_i \Big( 2^4 c^4_i + 2^4 c^2_i - 2^5 c^3_i \Big)^2 = \sum_i \Big[   \left(    2^4 c^4_i  - 2^4 c^3_i  \right) - \left(   2^4 c^3_i -2^4 c^2_i  \right) \Big]^2\\
&=\sum_i  \left(    2^4 c^4_i  - 2^4 c^3_i  \right)^2 + \sum_i  \left(   2^4 c^3_i -2^4 c^2_i  \right)^2 - 2 \sum_i \left(    2^4 c^4_i  - 2^4 c^3_i  \right)  \left(   2^4 c^3_i -2^4 c^2_i  \right)\\
& = : A_1 + A_2 -2 A_3.
\end{align*}
Note that $A_1 = 2 \Var \left( \Gamma_3(F) -2 \Gamma_2(F) \right)$. Moreover, 
\[
2 \Var \left( \Gamma_2(F) -2 \Gamma_1(F) \right) =  2 \, \Big\{  \frac{\kappa_6(F)}{5!} - 4 \frac{\kappa_5 (F)}{4!} + 4 \frac{\kappa_4(F)}{3!} \Big\}
= \sum_i \left( 2^3 c^3_i - 2^3 c^2_i \right)^2.
\]
Therefore $A_2 = 8  \Var \left( \Gamma_2(F) -2 \Gamma_1(F) \right)$. Also,
\begin{align*}
A_3 & =  \sum_i \left(    2^4 c^4_i  - 2^4 c^3_i  \right)  \left(   2^4 c^3_i -2^4 c^2_i  \right) 
= 4 \Big\{ \frac{\kappa_7(F)}{6!} - 4 \, \frac{\kappa_6 (F)}{5!} + 4 \frac{\kappa_5(F)}{4!} \Big\}\\
&= 4 \, \E \Big[ \left( \CenteredGamma_3(F) -2 \CenteredGamma_2 (F) \right) \left( \CenteredGamma_2(F) -2 \CenteredGamma_1 (F) \right) \Big].
\end{align*}
Finally, we arrive in
\begin{align}
2   \Var^{\, 2}  \left( \Gamma_1 (F) - 2 F \right)  & =    \Var \left( \Gamma_3(F) -2 \Gamma_2(F) \right)  -4  \E [ \left( \CenteredGamma_3(F) -2 \CenteredGamma_2 (F) \right) \left( \CenteredGamma_2(F) -2 \CenteredGamma_1 (F) \right) ] \notag \\
& \quad + 4  \Var \left( \Gamma_2(F) -2 \Gamma_1(F) \right) +\frac{B}{2} \notag \\
&=  \Var \Big( \left( \Gamma_3 (F) - 2 \Gamma_2 (F) \right) -2  \left( \Gamma_2(F) -2 \Gamma_1(F) \right) \Big) +\frac{B}{2} \notag \\
& = \Phi_F (1) + \frac{B}{2}. \label{eq:funny}
\end{align}
As a result, $\Phi_F (1) \le 2  \Var^{\, 2}  \left( \Gamma_1 (F) - 2 F \right)$. Also, $\Phi_F (1)= \frac{A}{2}= 2^7 \sum_i (c^2_i - c_i)^4$. Hence, $\Phi_1 (F)=0$ implies that either $c_i =0$ for all $i \ge 1$, which is impossible, or $c_i =1$ for all $i \ge 1$. Now, together with the condition $ 2 \sum_{i \ge 1} c^2_i =2 \nu$, we can deduce that there are only finitely many non-zero coefficients $c_i$, and moreover that $\nu$ is an integer. Hence $F \stackrel{\mathcal{D}}{=} G (\nu)$ is centered Gamma distributed. \\
Case $\mathbf{(I)}$: Note that $\Phi_F (0) = \Phi_F (\beta_0) = \Var\left( \Gamma_3 (F) - 2 \Gamma_2 (F) \right)$, and also function $\Phi_F$ is increasing on $(-\infty, \beta^{-}_{\text{min}}) \cup (\beta^+_{\text{min}},+\infty)$. Since $\beta^+_{\text{min}} \le \beta_0$, this implies that, if $\beta_0 \le 1$,
\begin{align*}
\Var & \left( \Gamma_3 (F) - 2 \Gamma_2 (F) \right)= \Phi_F (0)= \Phi_F (\beta_0) \\
& \le \Phi_F (1) =  \Var \Big( \left( \Gamma_3 (F) - 2 \Gamma_2 (F) \right) -2  \left( \Gamma_2(F) -2 \Gamma_1(F) \right) \Big) \\
& \le 2  \Var^{\, 2}  \left( \Gamma_1 (F) - 2 F \right).
\end{align*}
Case $\mathbf{ (II)}$ can be discussed in a similar matter. Finally, if $\Phi_F (\beta^{\pm}_{\text{min}})=0$, then relation \eqref{eq:=a.s.} implies that $\Gamma_{r+1}(F) - 2 \Gamma_r (F) \stackrel{\text{a.s.}}{=} k^{r-1} \left( \Gamma_2 (F) - 2 \Gamma_1 (F) \right)$ for all $r \ge 2$. Note that we assume that $k \neq 0$, otherwise, one can immediately deduce that $F \stackrel{\mathcal{D}}{=} G(\nu)$. Therefore, 
\[
\sum_i \left( \frac{2c_i}{k} \right)^{r-1} (c^3_i - c^2_i) = \sum_i (c^3_i - c^2_i), \quad \forall \, r \ge 2.
\]
Hence, in general, the non-trivial possibility is that $c_i = k/2$ for some index $ 1 \le i \le \ell_1$, and $c_j=1$ for some other index $ 1\le j \le \ell_2$. Now taking into account the second moment assumption, we obtain that $\ell_1 \frac{k^2}{4} + \ell_2 = \nu$, i.e. 
\begin{equation}\label{eq:last-claim}
F= \sum_{i=1}^{\ell_1} k/2 \left(N^2_i - 1 \right)  + \sum_{j=1}^{\ell_2} (N^2_j -1).
\end{equation}
\end{proof}

\begin{rem}\label{rem:3<1}
\begin{itemize}
\item[\textit{(i)}] Plainly, when $k=2$, the random variable $F$ appearing in \eqref{eq:last-claim} is in fact distributed like $G(\nu)$. This is consistent with the fact that when $k=2$, relation \eqref{eq:=a.s.} turns into $\Gamma_3 (F) - 4 \Gamma_2(F) + 4 \Gamma_1 (F) = 0$. The latter means that $\Phi_F (1) =0$, and therefore $F \stackrel{\mathcal{D}}{=} G(\nu)$. Also note that $k^2 \le 4 \nu$, and so $\ell_1 \le 16$ and $\ell_2 \le \nu$.
\item[\textit{(ii)}]  By setting $ u  =  2 f \cont{1} f - 2 f$ and $v = 2^2 f \contIterated{1}{3} f - 2^2 f \cont{1} f$, one can verify that
 \begin{align*}
   u \cont{1} u &= \left( 2 f \cont{1} f - 2 f \right) \cont{1} \left( 2 f \cont{1} f - 2 f \right) \\
 &= 2^2 f \contIterated{1}{4} f - 2^3 f \contIterated{1}{3} f + 2^2 f \cont{1} f\\
 &= \Big\{ 2^2 f \contIterated{1}{4} f - 2^2 f \contIterated{1}{3} f \Big\} - \Big\{ 2^2 f  \contIterated{1}{3} f -  2^2 f \cont{1} f \Big\}\\
 &= \Big\{ 2^2 f \contIterated{1}{4} f - 2^2 f \contIterated{1}{3} f \Big\} -  v.
 \end{align*}
 Therefore,
 \begin{align*}
  \Phi_F (1) &=  \Var \Big(  \left( \Gamma_3(F) - 2 \Gamma_2 (F) \right) -2   \left( \Gamma_2(F) - 2 \Gamma_1 (F) \right) \Big)\\
 &= 8 \norm{ u \cont{1} u }^2 \\
 & \le  8 \norm{u}^4 = 2   \Var^{\, 2} \left( \Gamma_1 (F) - 2 F \right)
 \end{align*}
Hence, relying on relation \eqref{eq:funny}, the quantity $B/2$ is the exact amount one loses when applying the classical estimate $\norm{ f \cont{1} g } \le \norm{f} \, \norm{g}$.
%
\item[\textit{(iii)}] Let $A_f$ stand for the associated Hilbert-Schmidt operator. Condition $\beta_0 \le 1$ in item $\mathbf{(I)}$ of Lemma \ref{lem:3<1??} is equivalent to the following trace inequality
\[
\Tr\left( A_f (A^3_f - A^2_f)^2 \right) \le \frac{1}{2} \Tr ( A^3_f - A^2_f )^2.
\]
\item[(iv)] We also point out that the condition $\text{Cov}\left( \Gamma_3 (F) - 2 \Gamma_2 (F), \Gamma_2 (F) - 2 \Gamma_1 (F) \right) < 0$ yields that 
$$ \Var\left( \Gamma_3 (F) - 2 \Gamma_2 (F) \right) \le \Phi_F(1) \le 2 \Var^{\, 2} \left(\Gamma_1 (F) - 2 F \right).$$

\item[\textit{(v)}] Let $( F_n)_{n \ge 1}= \big( \sum_i c_{i,n} (N^2_i -1) \big)_{n\ge 1}$ be a sequence in the second Wiener chaos such that $\E(F^2_n)=2\nu$ for all $n \ge 1$, and that $F_n$ converges in distribution towards a centered Gamma random variable $G(\nu)$. In addition, assume that either one of the following  conditions
\begin{itemize}
\item[(1)] \begin{equation}\label{eq:1/2-condition}
 \limsup_{n\to \infty} \frac{\sum_i c_{i,n} (c^3_{i,n} - c^2_{i,n})^2}{ \sum_i  (c^3_{i,n} - c^2_{i,n})^2} \le \frac{1}{2},
 \end{equation}
\item[(2)] the numerical sequence $\chi(n):= \sum_i c_{i,n} (c^3_{i,n} - c^2_{i,n})^2$ converges to $0$ from below;
\end{itemize}
are fulfilled. Then there exists a constant $C$, independent of $n$, such that 
\[
\Var \left( \Gamma_3 (F_n) - 2 \Gamma_2 (F_n) \right) \le_C \Var^{ \, 2} \left( \Gamma_1 (F_n) - 2 F_n \right).
\]
\item[\textit{(vi)}] In general, one has to note that the condition $$\text{Cov}\left( \Gamma_3 (F) - 2 \Gamma_2 (F), \Gamma_2 (F) - 2 \Gamma_1 (F)\right) = 0$$ does not guarantee that $F \stackrel{\mathcal{D}}{=} G(\nu)$. A simple counterexample is given by $F= c_1 (N^2_1 -1) + c_2 (N^2_2 -1)$ where (up to numerical error) $c_1= 1.27$ and $c_2= - 0.62$. In fact, $(c_1,c_2)$ is one of the intersections of two curves $x^2+y^2=2$, and $x^5 (x-1)^2 + y^5 (y-1)^2 =0$.
\end{itemize}

\end{rem}

The forthcoming results aim to provide neat characterizations of centered Gamma distribution $G(\nu)$, $\nu>0$ inside the second Wiener chaos by recruiting the theory of real quadratic forms.
\begin{lem}\label{lem:qf}
Let  $F= I_2 (f)$ be in the second Wiener chaos such that $\E[F^2]=2\nu$. For $\beta_1,\beta_2 \in \R$ define the following non-negative definite binary quadratic form 
\begin{equation}\label{eq:binary-qf}
\Psi_2 (\beta_1,\beta_2):= \Var \Big( \beta_1 \left(  \Gamma_3(F) - 2 \Gamma_2(F) \right) - 2 \beta_2 \left(  \Gamma_2(F) - 2 \Gamma_1(F) \right) \Big).
\end{equation}

 Moreover, assume that $\Delta = \text{discriminant } (\Psi_2) \neq 0$. Then $F \stackrel{\mathcal{D}}{=} G(\nu)$ if and only if the quadratic form $\Psi_2$ is isotropic, i.e.  $\Psi_2 (\beta_1,\beta_2)=0$ for some $(\beta_1,\beta_2) \neq (0,0)$. In fact, if $\Psi_2 (\beta_1,\beta_2)=0$ for some $(\beta_1,\beta_2) \neq (0,0)$, then $\Psi_2 = 0$ everywhere, and the random variable $F$ is distributed according to a centered Gamma distribution with parameter $\nu$.
\end{lem}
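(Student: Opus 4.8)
The plan is to reduce the whole statement to a single sum-of-squares identity for $\Psi_2$ and then read everything off from it. First I would linearise the argument of the variance: by the contraction representation \eqref{eq:CentredGammaInTermsOfContraction}, $\CenteredGamma_r(F) = 2^r I_2(f \contIterated{1}{r+1} f)$, so that $\CenteredGamma_3(F) - 2\CenteredGamma_2(F) = 8\, I_2(f \contIterated{1}{4} f - f \contIterated{1}{3} f)$ and $\CenteredGamma_2(F) - 2\CenteredGamma_1(F) = 4\, I_2(f \contIterated{1}{3} f - f \contIterated{1}{2} f)$. Consequently the argument of the variance defining $\Psi_2$ is a single double integral $I_2(g_{\beta_1,\beta_2})$, with kernel $g_{\beta_1,\beta_2} = 8\beta_1(f \contIterated{1}{4} f - f \contIterated{1}{3} f) - 8\beta_2(f \contIterated{1}{3} f - f \contIterated{1}{2} f)$. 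Identifying $f \contIterated{1}{p} f$ with the operator $A_f^p$ (whose eigenvalues are $c_{f,i}^p$, cf. \eqref{eq:SecondWienerChaosCumulantFormulaEigenvalues}), the kernel $g_{\beta_1,\beta_2}$ has eigenvalues $8\,c_{f,i}^2(c_{f,i}-1)(\beta_1 c_{f,i}-\beta_2)$, and the isometry \eqref{eq:IsometryProperty} gives $\E[I_2(g_{\beta_1,\beta_2})^2] = 2\norm{g_{\beta_1,\beta_2}}^2$. Collecting terms yields the key identity
\[
\Psi_2(\beta_1,\beta_2) = 2^7 \sum_{i \ge 1} c_{f,i}^4 (c_{f,i} - 1)^2 (\beta_1 c_{f,i} - \beta_2)^2 ,
\]
which in particular exhibits $\Psi_2$ as non-negative definite. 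Its three matrix entries, $\Var(\Gamma_3(F) - 2\Gamma_2(F)) = 2^7\sum_i c_{f,i}^6(c_{f,i}-1)^2$, $\Cov(\Gamma_3(F) - 2\Gamma_2(F), \Gamma_2(F) - 2\Gamma_1(F)) = 2^6\sum_i c_{f,i}^5(c_{f,i}-1)^2$ and $\Var(\Gamma_2(F) - 2\Gamma_1(F)) = 2^5\sum_i c_{f,i}^4(c_{f,i}-1)^2$, are consistent with Lemma \ref{lem:Var(Gamma_r-2Gamma_r-1)}.

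Second, I would translate the discriminant hypothesis. Writing $p_i := c_{f,i}^3(c_{f,i}-1)$ and $q_i := c_{f,i}^2(c_{f,i}-1)$, the identity becomes $\Psi_2(\beta_1,\beta_2) = 2^7\big(\beta_1^2\sum_i p_i^2 - 2\beta_1\beta_2 \sum_i p_i q_i + \beta_2^2\sum_i q_i^2\big)$, so that, up to a positive multiple, $\Delta$ is proportional to $(\sum_i p_i q_i)^2 - (\sum_i p_i^2)(\sum_i q_i^2)$. By the Cauchy--Schwarz inequality this is $\le 0$ and vanishes exactly when the sequences $(p_i)$ and $(q_i)$ are proportional. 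Since $p_i = c_{f,i}\,q_i$, proportionality holds if and only if all eigenvalues $c_{f,i}\notin\{0,1\}$ share one common value. Hence the hypothesis $\Delta \neq 0$ says precisely that at least two eigenvalues lying outside $\{0,1\}$ are distinct.

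Third comes the substantive \emph{in fact} claim. Suppose $\Psi_2(\beta_1,\beta_2)=0$ for some $(\beta_1,\beta_2)\neq(0,0)$. By the sum-of-squares identity every summand vanishes, so for each $i$ either $c_{f,i}\in\{0,1\}$ or $\beta_1 c_{f,i}=\beta_2$. If some $c_{f,i_0}\notin\{0,1\}$ then $\beta_1 c_{f,i_0}=\beta_2$; here necessarily $\beta_1\neq0$ (otherwise $\beta_2=0$ as well, against $(\beta_1,\beta_2)\neq(0,0)$), whence $c_{f,i_0}=\beta_2/\beta_1$ and, for the same reason, every eigenvalue outside $\{0,1\}$ equals $\beta_2/\beta_1$ --- contradicting $\Delta\neq0$ by the previous paragraph. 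Therefore all $c_{f,i}\in\{0,1\}$, which forces $\Psi_2\equiv0$; moreover $\sum_i c_{f,i}^2=\nu$ leaves only finitely many eigenvalues equal to $1$, so $\nu\in\N$ and $F\stackrel{\mathcal{D}}{=}\sum_{i=1}^{\nu}(N_i^2-1)=G(\nu)$. The equivalence then follows at once: if $F\stackrel{\mathcal{D}}{=}G(\nu)$ then by Proposition \ref{prop:OnlyIntegerNuPossible} all eigenvalues lie in $\{0,1\}$, so $\Psi_2\equiv0$ is (trivially) isotropic; conversely isotropy gives $F\stackrel{\mathcal{D}}{=}G(\nu)$ by what was just shown.

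The only genuine computation, and the step I expect to be the main obstacle, is the sum-of-squares identity of the first paragraph: one must track the powers of $2$ correctly and, crucially, recognise the common factor $c_{f,i}^2(c_{f,i}-1)$ inside the double integral, after which the perfect square $(\beta_1 c_{f,i}-\beta_2)^2$ emerges. Once this identity is in place, the discriminant analysis is a one-line application of Cauchy--Schwarz and the remaining arguments are elementary bookkeeping with the eigenvalues.
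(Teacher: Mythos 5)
Your proof is correct, and it takes a genuinely different route from the paper's. The paper disposes of the lemma in a few lines of abstract $L^2$ reasoning: the forward direction is declared obvious, and for the converse it splits into the cases $\beta_1=0$ and $\beta_1\neq 0$; in the first case $\Psi_2(0,\beta_2)=0$ gives $\Var(\Gamma_2(F)-2\Gamma_1(F))=0$, in the second it rescales by $\beta_1^2$ and uses the non-degeneracy $\Delta\neq0$ (this is the terse ``immediately implies'' step) to force $\Var(\Gamma_3(F)-2\Gamma_2(F))=0$; either way the previously established characterization, Lemma \ref{lem:chain}, then yields $F\stackrel{\mathcal{D}}{=}G(\nu)$. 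You never invoke Lemma \ref{lem:chain}: instead you derive the explicit sum-of-squares identity $\Psi_2(\beta_1,\beta_2)=2^7\sum_i c_{f,i}^4(c_{f,i}-1)^2(\beta_1c_{f,i}-\beta_2)^2$ (which is correct, and consistent with Lemma \ref{lem:Var(Gamma_r-2Gamma_r-1)} and the covariance computation), translate $\Delta\neq0$ into the concrete spectral statement that at least two eigenvalues outside $\{0,1\}$ are distinct, and finish by elementary eigenvalue bookkeeping, re-deriving along the way exactly the piece of Lemma \ref{lem:chain} that is needed. What your route buys: it is self-contained, it produces an explicit formula for the form $\Psi_2$, and it makes visible precisely where $\Delta\neq0$ enters --- the very point the paper's proof compresses into ``immediately implies''. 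It also exposes a feature the paper leaves hidden: isotropy forces all eigenvalues into $\{0,1\}$, hence $\Psi_2\equiv0$ and therefore $\Delta=0$, so the two hypotheses of the lemma can in fact never hold simultaneously and the stated equivalence is true in a somewhat vacuous sense; both your derivation and the paper's remain formally valid. What the paper's route buys is brevity, by leaning on the already established Gamma characterization instead of recomputing spectral identities.
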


\begin{rem}
One has to note that, by the Cauchy-Schwarz inequality, $\Delta \ge 0$. Also, the  requirement $\Delta \neq 0$ is equivalent to saying that the form $\Psi_2$ is positive definite, i.e. $\operatorname{det} (A (\Psi_2)) >0$, where $A (\Psi_2)$ is the associated symmetric matrix. Also, when $\Delta = 0$, then $\Gamma_3 (F) - 2 \Gamma_2 (F) \stackrel{\text{a.s.}}{=} k \left( \Gamma_2(F) - 2 \Gamma_1 (F) \right)$, for some constant $k$, and therefore the binary quadratic form $\Psi_2$ as in \eqref{eq:binary-qf} reduces to $\Psi_2 (\beta_1,\beta_2) = (k\beta_1 - 2 \beta_2)^2 \Var \left( \Gamma_2 (F) - 2 \Gamma_1 (F) \right)$. In this case, the sole requirement $\Psi_2 (\beta_1,\beta_2) =0$ for some $(\beta_1,\beta_2) \neq (0,0)$ implies that, in general, $F$ is of the form given in \eqref{eq:mixed-gamma}.

\end{rem}
\begin{proof}[Proof of Lemma \ref{lem:qf}]:
Obviously, if $F \stackrel{\mathcal{D}}{=} G(\nu)$, then $\Psi_2 (\beta_1,\beta_2)=0$. Now assume that for some $(\beta_1,\beta_2) \neq (0,0)$, we have that $\Psi_2 (\beta_1,\beta_2)=0$. If $\beta_1 =0$, then, again, one can readily deduce that $F \stackrel{\mathcal{D}}{=} G(\nu)$. Otherwise 
\[
\Psi_2 (\beta_1, \beta_2)= \beta^2_1 \Var \Big( \left(  \Gamma_3(F) - 2 \Gamma_2(F) \right) - 2 \frac{\beta_2}{\beta_1}\left(  \Gamma_2(F) - 2 \Gamma_1(F) \right) \Big)=0,
\]
which immediately implies that $\Var(\Gamma_3(F) -2 \Gamma_2 (F))=0$, and hence $F \stackrel{\mathcal{D}}{=} G(\nu)$. 
\end{proof}

In general, for $s \ge 1$, put 
\begin{equation}\label{eq:all-forms1}
\mathscr{D}_s := \Big \{ \Psi_s (\beta_1, \cdots, \beta_s) := \Var \Big( \sum_{r \in A} \beta_r \left(  \Gamma_{r}(F) - 2 \Gamma_{r-1}(F)   \right) \Big) \, : A \subseteq \N  \text{ and } \# A =s \Big\}, 
\end{equation}
and
\[
\mathscr{D} = \cup_{s \ge 1} \mathscr{D}_s.
\]

\begin{lem}
Let  $F= I_2 (f)$ be in the second Wiener chaos such that $\E[F^2]=2\nu$. Then the set $\mathscr{D}$, containing non-negative quadratic forms of the form \eqref{eq:all-forms1}, characterizes the centered Gamma distribution in the sense that if for $ \Psi_s \in \mathscr{D}_s \subseteq \mathscr{D}$, and $s \ge 1$, 
\[
\Psi_s (\beta_1, \cdots, \beta_s)=0
\]
for some $(\beta_1,\cdots,\beta_s) \neq (0,\cdots,0)$, and if $\operatorname{det}(A(\Psi_s)) \neq 0$, then $F \sim CenteredGamma(\nu)$.
\end{lem}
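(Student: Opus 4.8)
The plan is to lift the proof of Lemma~\ref{lem:qf} (the case $s=2$) to an arbitrary finite index set $A=\{r_1<\dots<r_s\}\subseteq\N$, working throughout with the eigenvalue representation of the second Wiener chaos. The first step is to identify $\Psi_s$ as a non-negative quadratic form in $(\beta_1,\dots,\beta_s)$ whose associated symmetric matrix $A(\Psi_s)$ is, up to a fixed positive diagonal rescaling, the $L^2(\Omega)$-Gram matrix of the centred generators $\{\CenteredGamma_{r}(F)-2\CenteredGamma_{r-1}(F):r\in A\}$. Thus $\det(A(\Psi_s))\neq 0$ is equivalent to the linear independence of these $s$ generators in $L^2$, while the isotropy hypothesis $\Psi_s(\beta)=0$ for some $\beta\neq 0$ is equivalent to the almost sure relation $\sum_{r\in A}\beta_r(\CenteredGamma_{r}(F)-2\CenteredGamma_{r-1}(F))=0$.

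Second, I would translate both conditions into the language of the eigenvalues $\{c_i\}$ of $A_f$. Writing $\CenteredGamma_r(F)=2^r I_2(f\contIterated{1}{r+1}f)$ and using the isometry \eqref{eq:IsometryProperty}, the generator $\CenteredGamma_{r}(F)-2\CenteredGamma_{r-1}(F)$ is $I_2$ of a kernel whose eigenvalues are $(c_i-1)(2c_i)^r$. Hence the isotropy relation becomes $(c_i-1)P(2c_i)=0$ for every $i$, with $P(x):=\sum_{r\in A}\beta_r x^r$ a non-zero polynomial, so that each non-zero eigenvalue is either equal to $1$ or belongs to the finite set $\{z/2:P(z)=0\}$. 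Together with $\sum_i c_i^2=\nu<\infty$ this already shows that $F$ has only finitely many non-zero eigenvalues, taking at most finitely many distinct values; denote by $m$ the number of distinct ``active'' values different from $1$. A dimension count then shows that a non-zero $P$ supported on the $s$ exponents of $A$ can vanish at the $m$ doubled active values only if $m\le s-1$.

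Third comes the decisive use of the non-degeneracy. In the eigenvalue picture, $\det(A(\Psi_s))\neq 0$ says that the vectors $((c_i-1)c_i^{r})_i$, $r\in A$, are linearly independent in $\ell^2$; since each is supported on the indices carrying the $m$ distinct active values, they may be viewed as $s$ vectors of generalized-Vandermonde type in $\R^m$, and their independence forces $s\le m$. Confronting $s\le m$ with the bound $m\le s-1$ from the previous step leaves only the possibility that there are no active values different from $1$, i.e. every non-zero $c_i$ equals $1$; with $\sum_i c_i^2=\nu$ this makes $\nu$ a positive integer and $F\stackrel{\mathcal D}{=}\sum_{i=1}^{\nu}(N_i^2-1)=G(\nu)$. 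Equivalently, and more cleanly, the reduction produces a single vanishing $\Var(\Gamma_{r}(F)-2\Gamma_{r-1}(F))=0$, and Lemma~\ref{lem:chain} then delivers $F\stackrel{\mathcal D}{=}G(\nu)$ at once.

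The main obstacle is the generalized-Vandermonde rank argument at the heart of the third step: one must show that distinct non-zero nodes $2a_k$ together with the $s$ distinct exponents in $A$ genuinely yield $s$ independent vectors, so that $\det(A(\Psi_s))\neq 0$ really forces $s\le m$. Care is needed because the active values may be negative and because a priori there could be infinitely many vanishing eigenvalues; the finiteness extracted from $\sum_i c_i^2=\nu$, combined with a monotone-convergence argument in the spirit of Proposition~\ref{lem:infinite-eigenvalues}, should reduce everything to the finite generalized-Vandermonde statement. Once the two dimension bounds $m\le s-1$ and $s\le m$ are in place, the collapse to the pure centred chi-square law, and hence to $G(\nu)$ via Lemma~\ref{lem:chain}, is immediate.
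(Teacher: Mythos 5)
Your Step 1 is exactly right, and it is in fact the whole of the paper's proof; the problem is that you then walk away from it into a spectral argument whose pivotal claim is false, not merely unproven. Your Step 2 asserts, by a ``dimension count'', that a non-zero polynomial $P(x)=\sum_{r\in A}\beta_r x^r$ with $s$ monomials can vanish at the $m$ distinct doubled active values only if $m\le s-1$; equivalently, that generalized Vandermonde matrices formed from the exponents in $A$ and $m\ge s$ distinct non-zero nodes are non-singular. This fails once nodes of both signs are allowed (you sense the sign issue, but you locate it in Step 3, where linear independence trivially yields $s\le m$; the real damage is in Step 2). Concretely, take $A=\{2,4\}$ and $P(x)=x^{2}\bigl(4a^{2}-x^{2}\bigr)$: it has the two distinct non-zero roots $\pm 2a$, so $m=2=s$, and the corresponding $2\times 2$ Vandermonde-type matrix is singular because its rows coincide. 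At the level of random variables, $F=a(N_1^{2}-1)-a(N_2^{2}-1)$ with $a=\sqrt{\nu/2}$ has eigenvalues $c_1=a$, $c_2=-a$, satisfies $(c_i-1)P(2c_i)=0$ for every $i$, i.e.\ the form $\Psi_2$ built on $A=\{2,4\}$ is isotropic, and $s\le m$ holds as well; yet $F$ is symmetric and hence not Gamma distributed. (This $F$ has $\det(A(\Psi_2))=0$, so it does not contradict the lemma; it contradicts your intermediate claim, which you derived from isotropy alone.) So isotropy, together with the only consequence of non-degeneracy your plan actually uses, does not force the active values to collapse to $1$, and the chain of deductions cannot reach the conclusion.

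What you are missing is that the equivalences you set up in Step 1 close the argument immediately, and this is precisely what the paper does. For a non-negative quadratic form, any nontrivial zero lies in the kernel of the associated matrix: writing $A(\Psi_s)=B^{T}B$, the identity $\Psi_s(\beta)=\lVert B\beta\rVert^{2}=0$ forces $A(\Psi_s)\beta=0$. Thus the two hypotheses interact by pure linear algebra: the paper invokes Sylvester's law of inertia to say that $\Psi_s\ge 0$ together with $\det(A(\Psi_s))\neq 0$ makes the form positive definite, so the assumed isotropy collapses it to $\Psi_s\equiv 0$; in particular $\Var\bigl(\Gamma_r(F)-2\Gamma_{r-1}(F)\bigr)=0$ for every $r\in A$, and Lemma \ref{lem:chain} gives $F\stackrel{\mathcal{D}}{=}G(\nu)$. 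Indeed, as your own Step 1 makes plain, non-degeneracy is linear independence of the generators $\CenteredGamma_r(F)-2\CenteredGamma_{r-1}(F)$, $r\in A$, in $L^2(\Omega)$, while isotropy is a nontrivial almost sure linear relation among them; the lemma lives entirely off this tension, and the paper's two-line proof exploits it with no eigenvalue expansion, no root counting and no Vandermonde matrices. Once $A(\Psi_s)$ is recognized as the Gram matrix of the generators, the spectral detour is unnecessary, and at its key step it is irreparable in the generality you need.
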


\begin{proof}
Assume that $\Psi_s (\beta_1,\cdots,\beta_s)=0$ for some $(\beta_1,\cdots,\beta_s) \neq (0,\cdots,0)$. Note that $\Psi_s \ge 0$, and hence by \textit{Sylvester's law of inertia}, together with the condition $\operatorname{det}(A(\Psi_s)) \neq 0$, one can deduce that $\Psi_s = 0$. Therefore $\Var\left( \Gamma_r (F) - 2 \Gamma_{r-1}(F) \right) =0$ for any $r \in A$, where the set $A$ is same as \eqref{eq:all-forms1}, which implies that $F$ is distributed like $G(\nu)$.
\end{proof}

\begin{rem}\label{rem:general-form} 
Let $A \subset \N$ with $\# A = d < \infty$. The characterization of random elements $F$ in the second Wiener chaos such that for some $(\beta_1,\cdots,\beta_{d}) \in \R^{d}_{\neq 0}$, $$\sum_{r \in A} \beta_r \left( \bar{ \Gamma}_{r}(F) - 2 \bar{ \Gamma}_{r-1}(F)   \right) \stackrel{\text{a.s.}}{=} 0$$ is an interesting problem. The solution relates to the real roots of polynomial equations and the well-known \textit{ Abel--Ruffini theorem} (also known as \textit{Abel's impossibility theorem}) \cite{Abel-imposibility}, and we leave it for future investigation. For instance, when $\# A = 3$, the problem can be reduced to the real solutions of the \textit{trinomial} equation $x^n + a x + b =0$ for some $n \in \N$, and hence the \textit{Glasser's derivation} method can be useful \cite{glasser}. 

\end{rem}

\section{A New Proof for a Bound in Kolmogorov Distance}
 In this section, we use techniques that date back to Tikhomirov from 1981 \cite{Tikhomirov1981}, who used Stein's equation on the level of characteristic functions in order to present a result for Gamma approximation in terms of the \textit{Kolmogorov distance}. Similar lines of arguments have been recently employed in more generality in \cite{a-m-p-s}.\\

The starting point is the following classical Berry-Esseen lemma as stated in \cite[p.~104]{petrov-book}. For a more general version of the lemma, the reader is referred to Zolotarev \cite{zolot-65}.

\begin{Lem}\label{lem:B-E}
	Let $F$ and $G$ be two cumulative distribution functions with corresponding characteristic functions $\phi_F$, and $\phi_G$. Then for every positive number $T >0$, and every $b > 1/2\pi$, the estimate 
	\begin{equation}
	\begin{split}
	d_{\text{Kol}} (F,G):= \sup_{x \in \R} \abs{ F(x) - G(x) } \, \le \,  & b \int_{-T}^{T} \abs[\Big]{ \frac{\phi_F (t) - \phi_G(t)}{t} } \,dt \\
	& \quad + b \, T \, \sup_x \int_{ \abs{y} \le \frac{c(b)}{T}} \abs{ G(x+y) - G(x) } \, dy,
	\end{split} \label{eq:EsseenLemma}
	\end{equation}
	takes place, where $c(b)$ is a constant depending only on $b$, and it is given by the root of the following equation
	\[ \int_{0}^{\frac{c(b)}{4}} \frac{\sin ^2 u}{u^2} \, du = \frac{\pi}{4} + \frac{1}{8b}. \]
	In particular, if $\sup_x \abs{ G'(x) } \le K$, then 
	\begin{equation}\label{eq:B-E-bounded}
	\begin{split}
	d_{\text{Kol}} (F,G):= \sup_{x \in \R} \abs{ F(x) - G(x)  } \, \le \,   b \int_{-T}^{T} \abs[\Big]{ \frac{\phi_F (t) - \phi_G(t)}{t} } \, dt
	+ c(b) \frac{K}{T}.
	\end{split}
	\end{equation}
\end{Lem}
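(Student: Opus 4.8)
The plan is to reproduce the classical \emph{Esseen smoothing inequality}. Since $F$ and $G$ are distribution functions, the difference $\Delta := F - G$ is bounded, of bounded variation, and satisfies $\Delta(\pm\infty)=0$; hence its Fourier--Stieltjes transform is $\phi_F-\phi_G$, and an integration by parts formally gives $\int_\R e^{itx}\Delta(x)\,dx = (\phi_G(t)-\phi_F(t))/(it)$. Integrability issues are sidestepped by working throughout with a smoothed version of $\Delta$, so no regularity on $G$ is needed for the general bound \eqref{eq:EsseenLemma}.

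First I would introduce the Fej\'er-type smoothing kernel $v_T(x) = (1-\cos Tx)/(\pi T x^2)$, a probability density whose characteristic function $\widehat{v_T}(t) = (1-\abs{t}/T)_+$ is supported on $[-T,T]$, and form the regularised difference $\Delta_T := \Delta * v_T$. By Fourier inversion together with $\abs{\widehat{v_T}}\le 1$ one obtains the Fourier side of the estimate,
\[
\sup_{x\in\R}\,\abs{\Delta_T(x)} \le \frac{1}{2\pi}\int_{-T}^{T}\abs[\Big]{\frac{\phi_F(t)-\phi_G(t)}{t}}\,dt,
\]
which already produces the first term of \eqref{eq:EsseenLemma}.

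The second, geometric, step recovers $\eta := \sup_x\abs{\Delta}$ from $\sup_x\abs{\Delta_T}$. Choosing $x_0$ at (or arbitrarily close to) a near-extremal point of $\abs{\Delta}$ and splitting the convolution $\Delta_T(x_0)=\int\Delta(x_0-y)v_T(y)\,dy$ into the regions $\abs{y}\le c(b)/T$ and $\abs{y}>c(b)/T$, monotonicity of $F$ bounds the inner contribution from below by $\eta$ minus the oscillation $\sup_x\int_{\abs{y}\le c(b)/T}\abs{G(x+y)-G(x)}\,dy$, while the outer contribution is controlled by the tail mass of $v_T$, which after the natural substitution is a multiple of $\int_{c(b)/4}^{\infty}\sin^2u/u^2\,du$. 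Since $\int_0^{\infty}\sin^2u/u^2\,du=\pi/2$, the cut-off $c(b)$ defined by $\int_0^{c(b)/4}\sin^2u/u^2\,du=\pi/4+1/(8b)$ leaves tail mass $\pi/4-1/(8b)$; this is positive precisely when $b>1/(2\pi)$, which is exactly the hypothesis, and it is the budget that allows the leading coefficient on the Fourier term to be taken as any such $b$. Rearranging the resulting inequality yields \eqref{eq:EsseenLemma}.

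The main obstacle is this recovery step: tracking the constants so that the threshold $b>1/(2\pi)$ and the transcendental equation for $c(b)$ emerge simultaneously, and handling the jumps of $F$ through monotonicity rather than continuity (which is why only the modulus of continuity of the comparison function $G$ appears in the error term). Finally, the ``in particular'' assertion \eqref{eq:B-E-bounded} follows immediately by inserting $\abs{G(x+y)-G(x)}\le K\abs{y}$ into \eqref{eq:EsseenLemma}: then $bT\sup_x\int_{\abs{y}\le c(b)/T}\abs{G(x+y)-G(x)}\,dy \le bT\cdot K\,(c(b)/T)^2 = b\,c(b)^2\,K/T$, which is a constant depending only on $b$ times $K/T$, the factor $b\,c(b)^2$ being absorbed into the relabelled $b$-dependent constant.
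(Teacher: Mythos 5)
The paper itself gives no proof of this lemma --- it is quoted from Petrov \cite[p.~104]{petrov-book} --- so your attempt must be judged against the classical smoothing argument that citation points to, which is indeed the argument you outline. Your Fourier step is fine, but your recovery step, as written, fails at precisely the point that is the heart of Esseen's lemma. You split the convolution $\Delta_T(x_0)=\int\Delta(x_0-y)v_T(y)\,dy$ \emph{centered} at the near-extremal point $x_0$ and claim that monotonicity of $F$ bounds the inner contribution from below by essentially $\eta$ minus the oscillation of $G$. Monotonicity only helps on one side of $x_0$: if $\Delta(x_0)\ge\eta-\epsilon$, then for $y\le 0$ one has $F(x_0-y)\ge F(x_0)$ and hence $\Delta(x_0-y)\ge\eta-\epsilon-\abs{G(x_0-y)-G(x_0)}$, but for $y\in(0,c(b)/T]$ the point $x_0-y$ lies to the left of $x_0$, where $F$ may drop abruptly (think of a jump of $F$ just below $x_0$), and nothing better than $\Delta(x_0-y)\ge-\eta$ is available. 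The two half-windows then cancel and the centered split yields no usable inequality. The missing idea is to evaluate the smoothed difference at the \emph{shifted} point $x_1=x_0+c(b)/(2T)$ (at $x_0-c(b)/(2T)$ when instead $\Delta(x_0)\le-(\eta-\epsilon)$), so that every point $x_1-y$ of the inner window $\abs{y}\le c(b)/(2T)$ satisfies $x_1-y\ge x_0$ and monotonicity applies throughout.

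The shift is also what makes the constants come out, and yours are inconsistent: for your kernel $v_T(x)=(1-\cos Tx)/(\pi Tx^2)$, the tail mass outside $\abs{y}\le c/T$ is $\tfrac{2}{\pi}\int_{c/2}^{\infty}\frac{\sin^2u}{u^2}\,du$, a multiple of $\int_{c/2}^{\infty}$, not of $\int_{c/4}^{\infty}$ as you assert. In the corrected argument the kernel-mass split is at half-width $c(b)/(2T)$, giving inner mass $\tfrac{2}{\pi}\int_0^{c(b)/4}\frac{\sin^2u}{u^2}\,du=\tfrac12+\tfrac{1}{4\pi b}$ and tail $\tau=\tfrac12-\tfrac{1}{4\pi b}$, while the oscillation of $G$ is integrated (using $v_T\le T/(2\pi)$) over the \emph{doubled} window $\abs{y}\le c(b)/T$ --- exactly the window in \eqref{eq:EsseenLemma}. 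One then obtains $\eta(1-2\tau)\le\sup_x\abs{\Delta_T(x)}+\tfrac{T}{2\pi}\sup_x\int_{\abs{y}\le c(b)/T}\abs{G(x+y)-G(x)}\,dy$, and since $1-2\tau=1/(2\pi b)$, dividing produces exactly the prefactor $b$; this is the mechanism by which $b>1/(2\pi)$ and the transcendental equation ``emerge simultaneously'', and it cannot be reproduced with your window/tail bookkeeping. A final small point: inserting the Lipschitz bound into \eqref{eq:EsseenLemma} gives the second term $b\,c(b)^2K/T$, which is never $\le c(b)K/T$ (since $\frac{\sin^2u}{u^2}<1$ for $u>0$, the defining equation forces $c(b)>\pi+\tfrac{1}{2b}$, whence $b\,c(b)>\pi b+\tfrac12\ge 1$); so \eqref{eq:B-E-bounded} holds only with a relabelled $b$-dependent constant, as you say, not with the $c(b)$ defined by the equation --- a discrepancy already present in the paper's transcription of Petrov, but worth flagging rather than glossing as mere absorption.
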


In order to prove a Kolmogorov bound, we need an estimate on the difference of the characteristic functions and the distribution functions. The first is done in the following Lemma:

\begin{Lem}\label{lem:CharacteristicFuctionDifferenceBound}
	Let $\nu >0$ be an integer and let $F$ be a random variable admitting a finite chaos expansion with variance $\E[F^2] = 2 \nu$. Let $G(\nu) \sim CenteredGamma(\nu)$. Define
	\[ D(t) := \phi_F(t) - \phi_{G(\nu)}(t)  = \E\big[ e^{i t F} \big] - \E \big[ e^{i t G(\nu)} \big], \qquad t \in \R.  \]
	Then the following estimates take place:
	\begin{equation}
	\abs{D(t)} \leq \frac{1}{2} \, \abs{t} \, \E \abs{ 2(F+\nu) - \Gamma_1(F) } \leq \frac{1}{2} \, \abs{t} \, \sqrt{\Var(\Gamma_1(F) - 2F)}. \label{eq:BoundOnCharFunc-Gamma}
	\end{equation}
\end{Lem}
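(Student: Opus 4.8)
The plan is to convert the difference $D$ into the solution of a first–order linear ODE whose forcing term is exactly the Stein discrepancy $2(F+\nu)-\Gamma_1(F)$, and then to read off the estimate via an integrating factor; the second inequality is then a one–line consequence of Cauchy–Schwarz.

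First I would record the Malliavin integration–by–parts identity produced by feeding the complex exponential into \eqref{eq:IntegrationByParts}. Applying that formula through the chain rule \eqref{eq:ChainRule} to the real and imaginary parts $\cos(tF)$, $\sin(tF)$ and recombining gives $\E[F e^{itF}] = it\,\E[\Gamma_1(F)\,e^{itF}]$. Since $\phi_F'(t)=i\,\E[Fe^{itF}]$, substituting this into $\E[(2(F+\nu)-\Gamma_1(F))e^{itF}]$ and collecting terms yields the identity
\[ (2t+i)\,\phi_F'(t) + 2it\nu\,\phi_F(t) = it\,\E\big[(2(F+\nu)-\Gamma_1(F))\,e^{itF}\big]. \]

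Next I would observe that the right–hand side vanishes when $F$ is replaced by $G(\nu)$: for integer $\nu$ the target lies in the second chaos with $\Gamma_1(G(\nu))=2G(\nu)+2\nu$, so the bracket is a.s. zero. (One may instead verify this directly from $\phi_{G(\nu)}(t)=e^{-i\nu t}(1-2it)^{-\nu/2}$, where the algebraic identity $2t+i=i(1-2it)$ makes the computation immediate.) Hence $(2t+i)\phi_{G(\nu)}'(t)+2it\nu\,\phi_{G(\nu)}(t)=0$, and subtracting the two identities leaves, with $R(t):=\E[(2(F+\nu)-\Gamma_1(F))e^{itF}]$ and $M:=\E\abs{2(F+\nu)-\Gamma_1(F)}\ge\abs{R(t)}$,
\[ (2t+i)\,D'(t) + 2it\nu\,D(t) = it\,R(t), \qquad D(0)=0. \]
I would then solve this ODE with the integrating factor $1/\phi_{G(\nu)}$, legitimate since $\phi_{G(\nu)}$ never vanishes and $2t+i\neq 0$, obtaining $D(t)=\phi_{G(\nu)}(t)\int_0^t \tfrac{is\,R(s)}{\phi_{G(\nu)}(s)(2s+i)}\,ds$.

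The clean constant $1/2$ is the real crux, and it comes from two elementary facts: $\tfrac{s}{\abs{2s+i}}=\tfrac{s}{\sqrt{4s^2+1}}\le\tfrac12$, and $\abs{\phi_{G(\nu)}(t)}=(1+4t^2)^{-\nu/4}$ is non–increasing in $\abs{t}$, so $\abs{\phi_{G(\nu)}(t)/\phi_{G(\nu)}(s)}\le 1$ for $0\le s\le t$. Together they give $\abs{D(t)}\le \tfrac12 M\,t$ for $t\ge 0$, and the case $t<0$ follows from $\abs{D(-t)}=\abs{\overline{D(t)}}=\abs{D(t)}$; this is the first inequality. (Alternatively one avoids the integrating factor by an energy estimate on $\abs{D}^2$: for $t>0$ the damping term $-2t\nu\abs{D}^2/(4t^2+1)$ has the favourable sign and is discarded, leaving $\tfrac{d}{dt}\sqrt{\abs{D}^2+\epsilon^2}\le \tfrac{tM}{\sqrt{4t^2+1}}\le \tfrac{M}{2}$, then $\epsilon\to 0$.) Finally, the second inequality is Jensen followed by the observation that the discrepancy is centered: by Proposition \ref{Prop:RelationOldAndNewGamma}(b) one has $\E[\Gamma_1(F)]=\kappa_2(F)=2\nu$, whence $\E[2(F+\nu)-\Gamma_1(F)]=0$ and $M\le\sqrt{\E[(2(F+\nu)-\Gamma_1(F))^2]}=\sqrt{\Var(\Gamma_1(F)-2F)}$.
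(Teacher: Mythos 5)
Your proof is correct, and its skeleton coincides with the paper's Tikhomirov-style argument: both derive the same first-order linear ODE for $D$ with forcing term $it\,\E\big[(2(F+\nu)-\Gamma_1(F))e^{itF}\big]$ (the paper obtains it by feeding $f(x)=e^{itx}$ into the Stein operator $\mathfrak{L}f(x)=2(x+\nu)f'(x)-xf(x)$ and using $\E[\mathfrak{L}f(G(\nu))]=0$; your route via $\E[Fe^{itF}]=it\,\E[\Gamma_1(F)e^{itF}]$ is the same computation rearranged), solve it with an integrating factor under $D(0)=0$, and use $D(-t)=\overline{D(t)}$ to reduce to $t\ge 0$. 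Where you genuinely differ --- and in fact simplify --- is the final estimate. The paper computes the integrating factor $e^{a(t)}$ by integrating $2\nu t/(1-2ti)$ explicitly (without observing, as you do, that it equals $1/\phi_{G(\nu)}$), evaluates $\int_0^t s(4s^2+1)^{\nu/4-1/2}\,ds$ in closed form, and then needs a separate elementary lemma (Lemma \ref{Lem:NuT-Estimate}) to bound the resulting expression $\frac{1}{2(\nu+2)}\big(\sqrt{4t^2+1}-(4t^2+1)^{-\nu/4}\big)$ by $t/2$. Your pointwise bound on the integrand --- $\abs{\phi_{G(\nu)}(t)/\phi_{G(\nu)}(s)}\le 1$ for $0\le s\le t$ by monotonicity of $(1+4u^2)^{-\nu/4}$, combined with $s/\sqrt{4s^2+1}\le 1/2$ --- gives $\abs{D(t)}\le \tfrac12 Mt$ at once and renders Lemma \ref{Lem:NuT-Estimate} superfluous. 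Your treatment of the second inequality is also more explicit than the paper's bare ``just Cauchy--Schwarz'': you record that the discrepancy is centered (Proposition \ref{Prop:RelationOldAndNewGamma}(b)), so its second moment really is $\Var(\Gamma_1(F)-2F)$. One small remark: your first justification that the forcing vanishes at the target (via $\Gamma_1(G(\nu))=2G(\nu)+2\nu$ a.s.) uses integrality of $\nu$, whereas your parenthetical direct computation with $\phi_{G(\nu)}(t)=e^{-i\nu t}(1-2it)^{-\nu/2}$ works for every $\nu>0$ and is the one to keep if the lemma is to be extended, as the paper later does, to non-integer $\nu$.
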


\begin{proof}
	We consider the Stein operator associated to a centered Gamma random variable $G(\nu)$ (see \cite{d-p}, equation 2.7):
	\[
	\mathfrak{L}f(x) = 2 (x+ \nu) f'(x) - x f(x).
	\]
	Using the integration by parts formula, we get for all $f \in C^1$ with bounded derivative
	\begin{equation}
	\E[ \mathfrak{L}f(F) ] = \E\left[ f'(F) \big\{  2(F+ \nu) - \Gamma_1(F) \big\} \right]. \label{eq:GammaSteinIntByParts}
	\end{equation}
	Also, for all $C^1$ functions $f:\R \to \R$, such that the expectation exists (e.g. if $f$ is polynomially bounded), we have
	\begin{equation}
	\E[ \mathfrak{L}f(G(\nu)) ] = 0. \label{eq:GammaSteinExpectation}
	\end{equation}
	By considering real and imaginary part separately and using linearity, we can extend \eqref{eq:GammaSteinIntByParts} and \eqref{eq:GammaSteinExpectation} to complex valued functions $f: \R \to \C$. Thus letting $f(x) = e^{i t x}$ for $t \in \R$, we obtain
	\[
	\E[ \mathfrak{L}f(F) ] = it \, \E \big[ e^{itF} \big\{ 2(F+\nu) - \Gamma_1(F)  \big\} \big].
	\]
	Therefore
	\begin{align*}
	& it \, \E \big[ e^{itF} \big\{ 2(F+\nu) - \Gamma_1(F)  \big\} \big] = \E[ \mathfrak{L}f(F) ] = \E[ \mathfrak{L}f(F) ] - 0 = \E[ \mathfrak{L}f(F) ] - \E[ \mathfrak{L}f(G(\nu)) ] \\
	& \quad = it\times 2 \nu \Big( \E\big[e^{itF}\big] - \E\big[e^{itG(\nu)} \big] \Big) - (1-2it) \Big(  \E\big[Fe^{itF}\big] - \E\big[G(\nu)e^{itG(\nu)} \big] \Big) \\
	& \quad = it \times 2\nu D(t) + (2t+i) D'(t).
	\end{align*}
	So $D$ satisfies the differential equation
	\begin{equation}
	(1-2ti) D'(t) + 2 \nu \, t D(t) = e(t), \quad \text{where } e(t) :=t \, \E \big[ e^{itF} \big\{ 2(F+\nu) - \Gamma_1(F)  \big\} \big]. \label{eq:GammaODE} 
	\end{equation}
	Using the fact that $D(-t)=\overline{D(t)}$ and $\abs{D(t)} = \abs{\overline{D(t)}}$, we focus only on $t \geq 0$. The solution of the ordinary differential equation \eqref{eq:GammaODE} with initial condition $D(0)=0$ is given by
	\[
	D(t) = e^{-a(t)} \int_{0}^{t} \frac{e(s)}{1-2si} \, e^{a(s)} \, ds,
	\]
	where
	\[
	a(t) = \int \frac{2 \nu t}{1-2ti} \, dt = \frac{\nu}{4} \, \log(4t^2+1) + i \, \Big(t \nu - \frac{\nu}{2} \, \operatorname{arctan}(2t) \Big).
	\]
	Note that
	\[
	\abs{e^{a(t)}} = (4t^2+1)^{\frac{\nu}{4}} \quad \text{and} \quad \abs{e^{-a(t)}} = (4t^2+1)^{- \frac{\nu}{4}}.
	\]
	Thus we can estimate
	\begin{align*}
	\abs{D(t)} & \leq \abs{e^{-a(t)}} \int_{0}^{t} \abs*{\frac{1}{1-2si}} \abs{e(s)} \, \abs{e^{a(s)}} \, ds \\
	& \leq  (4t^2+1)^{- \frac{\nu}{4}} \int_{0}^{t} \frac{(4s^2+1)^{\nu/4}}{\sqrt{4s^2+1}} \, \abs{e(s)} \, ds \\
	& \leq \E \big[ \abs{ 2(F+\nu) - \Gamma_1(F) } \big] \, (4t^2+1)^{- \frac{\nu}{4}} \int_{0}^{t} s\,  (4s^2+1)^{\frac{\nu}{4} - \frac{1}{2}} \, ds \\
	& = \E \big[ \abs{ 2(F+\nu) - \Gamma_1(F) } \big] \, (4t^2+1)^{- \frac{\nu}{4}} \bigg( \frac{1}{2(\nu+2)} \Big[ (4t^2+1)^{\frac{\nu}{4} + \frac{1}{2}} -1 \Big] \bigg) \\
	& = \E \big[ \abs{ 2(F+\nu) - \Gamma_1(F) } \big] \, \frac{1}{2(\nu+2)} \, \big( \sqrt{4t^2+1} - (4t^2+1)^{-\nu/4} \big) \\
	& \leq \frac{1}{2} \, t \, \E \big[ \abs{ 2(F+\nu) - \Gamma_1(F) } \big].
	\end{align*}
	The last estimate is due to Lemma \ref{Lem:NuT-Estimate} below. The second inequality in \eqref{eq:BoundOnCharFunc-Gamma} is just Cauchy Schwarz.
\end{proof}

\begin{Lem} \label{Lem:NuT-Estimate}
	For any $\nu > 0$ and $t\geq 0$, we have that
	\[ \sqrt{4t^2 +1} - (4t^2+1)^{- \nu/4} \leq (2+ \nu) \times t. \]
\end{Lem}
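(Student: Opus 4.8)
The plan is to move everything to one side and reduce the claim to a single-variable monotonicity statement. For fixed $\nu>0$, define the auxiliary function
\[
f(t) := (2+\nu)\,t - \sqrt{4t^2+1} + (4t^2+1)^{-\nu/4}, \qquad t\ge 0,
\]
so that the assertion of the lemma is precisely $f(t)\ge 0$ on $[0,\infty)$. First I would record the boundary value $f(0) = 0 - 1 + 1 = 0$. Since equality holds at the endpoint, it suffices to show that $f$ is nondecreasing, i.e. that $f'(t)\ge 0$ for every $t\ge 0$.

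Next I would differentiate. A direct computation gives
\[
f'(t) = (2+\nu) - \frac{4t}{\sqrt{4t^2+1}} - 2\nu\, t\,(4t^2+1)^{-\nu/4-1}.
\]
The first term to dispose of is the middle one: the elementary bound $\frac{4t}{\sqrt{4t^2+1}}\le 2$ (equivalent to $16t^2\le 4(4t^2+1)$) yields
\[
f'(t) \ge \nu\Big(1 - 2t\,(4t^2+1)^{-\nu/4-1}\Big).
\]

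The crux is therefore the pointwise estimate $2t\,(4t^2+1)^{-\nu/4-1}\le 1$, equivalently $2t\le (4t^2+1)^{1+\nu/4}$. Since $\nu>0$ forces the exponent $1+\nu/4\ge 1$ and the base satisfies $4t^2+1\ge 1$, one has $(4t^2+1)^{1+\nu/4}\ge 4t^2+1$; moreover $4t^2+1-2t = 4\big(t-\tfrac14\big)^2+\tfrac34>0$, so $4t^2+1>2t$. Chaining these two bounds gives $2t<(4t^2+1)^{1+\nu/4}$, whence $f'(t)\ge 0$, and the lemma follows by integrating from $0$.

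I do not expect a genuine obstacle here; the only mildly delicate point is that the decaying factor $(4t^2+1)^{-\nu/4-1}$ prevents dominating the last term in $f'$ by a constant alone, which is why I isolate the clean polynomial inequality $2t\le 4t^2+1$ and exploit the exponent being $\ge 1$. As an alternative route one could split the target as $[\sqrt{4t^2+1}-1]+[1-(4t^2+1)^{-\nu/4}]$ and bound the two brackets by $2t$ and $\nu t$ respectively: the first follows by squaring $\sqrt{4t^2+1}\le 1+2t$, and the second reduces to the same monotonicity argument applied to $g(t):=\nu t - 1 + (4t^2+1)^{-\nu/4}$, with $g(0)=0$ and $g'(t)=\nu\big(1-2t(4t^2+1)^{-\nu/4-1}\big)\ge 0$.
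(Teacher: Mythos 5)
Your proof is correct, but it takes a genuinely different route from the paper. The paper proves the bound by a purely algebraic chain of three named elementary inequalities — $\sqrt{x+y}\le\sqrt{x}+\sqrt{y}$, $1-e^{-x}\le x$ for $x\ge -1$, and $\log(x)\le 2(\sqrt{x}-1)$ — writing $(4t^2+1)^{-\nu/4}=e^{-\frac{\nu}{4}\log(4t^2+1)}$ and chaining
\[
\sqrt{4t^2+1}-(4t^2+1)^{-\nu/4}\;\le\; 2t+\tfrac{\nu}{4}\log(4t^2+1)\;\le\; 2t+\tfrac{\nu}{2}\bigl(\sqrt{4t^2+1}-1\bigr)\;\le\;(2+\nu)\,t,
\]
with no calculus at all. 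You instead run a standard monotonicity argument: $f(0)=0$ and $f'(t)\ge 0$, with the derivative estimate reduced to the clean polynomial facts $\frac{4t}{\sqrt{4t^2+1}}\le 2$ and $2t\le 4t^2+1\le(4t^2+1)^{1+\nu/4}$; each step checks out, including the differentiation and the reduction $2t\,(4t^2+1)^{-\nu/4-1}\le 1\Leftrightarrow 2t\le(4t^2+1)^{1+\nu/4}$. Your closing "alternative route" is in fact structurally the closest to the paper: both decompose the target as (square-root part) $\le 2t$ plus (negative-power part) $\le \nu t$; the paper certifies the second bracket via the exponential/logarithm inequalities, while you certify it via the sign of $g'$. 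What each approach buys: the paper's is shorter and derivative-free but leans on recalling $\log(x)\le 2(\sqrt{x}-1)$; yours is more self-contained and mechanical, needing only the single quadratic inequality $2t\le 4t^2+1$, and it yields slightly more (strict monotonicity of the gap $f$). Either proof is acceptable.
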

\begin{proof}
	We make use of the following well-known inequalities:
	\begin{align}
	\sqrt{x+y} \leq \sqrt{x} + \sqrt{y}, \quad & \text{for all } x,y\geq 0; \label{eq:SqrtIneq} \\
	1-e^{-x} \leq x, \quad & \text{for all } x \geq -1; \label{eq:ExpIneq} \\
	\log(x) \leq 2(\sqrt{x} -1), \quad & \text{for all } x > 0. \label{eq:LogIneq}
	\end{align}
	With this we get
	\begin{align*}
	\sqrt{4t^2 +1} - (4t^2+1)^{- \nu/4} & \stackrel{\eqref{eq:SqrtIneq}}{\leq} 2t + 1 - e^{- \frac{\nu}{4} \log(4t^2+1)} \\
	& \stackrel{\eqref{eq:ExpIneq}}{\leq} 2t + \frac{\nu}{4} \, \log(4t^2+1) \\
	& \stackrel{\eqref{eq:LogIneq}}{\leq} 2t + \frac{\nu}{2} \, (\sqrt{4t^2+1} -1) \\
	& \stackrel{\eqref{eq:SqrtIneq}}{\leq} (2+\nu) \times t.
	\end{align*}
\end{proof}

In order to estimate the second term in the Esseen-Lemma \eqref{eq:EsseenLemma}, we need to study the cumulative distribution function (CDF) of a centered Gamma random variable $G(\nu)$. We show that it is Hölder-continuous with a Hölder-exponent depending on $\nu$.

\begin{Lem}\label{lem:CDF-Behaviour}
	Let $\nu>0$ be an integer and $G(\nu) \sim CenteredGamma(\nu)$. Denote by $G_{\nu}$ its CDF and by $g_{\nu}$ its probability density function (PDF). Then there exists a constant $K>0$, such that for all $a,b \in \R$ we have
	\begin{equation}
	\abs{G_{\nu}(a) - G_{\nu}(b)} \leq K \, |a-b|, \quad \text{if } \nu \geq 2, \label{eq:CDF-Behaviour1}
	\end{equation}
	and
	\begin{equation}
	\abs{G_{\nu}(a) - G_{\nu}(b)} \leq K \, |a-b|^{1/2}, \quad \text{if } \nu = 1. \label{eq:CDF-Behaviour2}
	\end{equation}
\end{Lem}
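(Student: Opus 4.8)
The plan is to read off both estimates directly from the explicit probability density $g_\nu$ of $G(\nu)$, since its regularity is entirely dictated by the behaviour near the left endpoint $-\nu$ of its support. Recall that $G(\nu) = 2\,\widehat{G}(\nu/2) - \nu$, where $\widehat{G}(\nu/2)$ has density $\widehat{g}$. A change of variables $y = (x+\nu)/2$ gives
\[
g_\nu(x) = \frac{1}{2\,\Gamma(\nu/2)}\left(\frac{x+\nu}{2}\right)^{\nu/2-1} e^{-(x+\nu)/2}\,\ind{x>-\nu}(x),
\]
so the only possible singularity of $g_\nu$ sits at $x=-\nu$ and is of order $(x+\nu)^{\nu/2-1}$. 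Since $G_\nu$ is absolutely continuous with $G_\nu' = g_\nu$, I would estimate $\abs{G_\nu(a)-G_\nu(b)}$ through $\int_a^b g_\nu(s)\,\ud s$ in both cases.

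For $\nu \ge 2$ the exponent $\nu/2-1$ is nonnegative, so $u\mapsto u^{\nu/2-1} e^{-u}$ is continuous on $(0,\infty)$, tends to $0$ as $u\uparrow\infty$, and either vanishes or is constant as $u\downarrow 0$; hence it is bounded, and therefore so is $g_\nu$, say by $K:=\norm{g_\nu}_\infty<\infty$. The fundamental theorem of calculus then yields
\[
\abs{G_\nu(a)-G_\nu(b)} = \abs[\Big]{\int_a^b g_\nu(s)\,\ud s} \le K\,\abs{a-b},
\]
which is exactly \eqref{eq:CDF-Behaviour1}.

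The case $\nu=1$ is where the actual work lies, since here the exponent is $-1/2$ and the density $g_1(x) = (2\pi)^{-1/2}(x+1)^{-1/2} e^{-(x+1)/2}\,\ind{x>-1}(x)$ blows up at $x=-1$. Because $G_1$ is constant (equal to $0$) on $(-\infty,-1]$, I may assume $-1\le a\le b$ without loss of generality. Bounding the harmless factor $e^{-(x+1)/2}\le 1$ on the support, I would write
\[
\abs{G_1(a)-G_1(b)} = \int_a^b g_1(s)\,\ud s \le \frac{1}{\sqrt{2\pi}}\int_a^b (s+1)^{-1/2}\,\ud s = \sqrt{\tfrac{2}{\pi}}\Big((b+1)^{1/2}-(a+1)^{1/2}\Big),
\]
and then invoke the elementary sub-additivity of the square root, inequality \eqref{eq:SqrtIneq}, in the form $\sqrt{b+1}-\sqrt{a+1}\le\sqrt{b-a}$, to conclude $\abs{G_1(a)-G_1(b)}\le\sqrt{2/\pi}\,\abs{a-b}^{1/2}$, which is \eqref{eq:CDF-Behaviour2}.

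The only delicate point is the $\nu=1$ estimate: one must extract the sharp H\"older exponent $1/2$ from the integrable-but-unbounded factor $(x+1)^{-1/2}$. The mechanism is simply that the antiderivative of an $(\cdot)^{-1/2}$ singularity is $1/2$-H\"older, which is precisely the content of the square-root inequality \eqref{eq:SqrtIneq} already recorded in the paper; no finer tool is needed. (More generally the same computation gives exponent $\nu/2$ for any $\nu\in(0,2)$, but since only the integer values $\nu=1$ and $\nu\ge 2$ are required I restrict to those.)
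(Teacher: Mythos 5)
Your proof is correct and follows essentially the same route as the paper: read the estimates off the explicit density, bounding $g_\nu$ for $\nu\ge 2$ to get the Lipschitz bound, and for $\nu=1$ integrating the $(s+1)^{-1/2}$ singularity and invoking $\sqrt{x+y}\le\sqrt{x}+\sqrt{y}$ to extract the H\"older exponent $1/2$. The only (cosmetic) difference is that your fundamental-theorem-of-calculus formulation treats all $\nu\ge 2$ uniformly, whereas the paper uses the mean value theorem and therefore handles $\nu=2$ separately because $G_\nu$ fails to be differentiable at the endpoint $-2$; likewise your WLOG reduction compresses the paper's three-case analysis for $\nu=1$ into one computation.
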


\begin{proof}
	The PDF of $G(\nu)$ is given by
	\[ g_\nu(x) = 2^{-\frac{\nu}{2}}\, \Gamma\Big(\frac{\nu}{2} \Big)^{-1} (x+\nu)^{\frac{\nu}{2} -1} \, e^{- \frac{x}{2} - \frac{\nu}{2}} \, \ind{x > -\nu}(x). \]
	If $\nu \geq 3$, then $g_\nu$ is continuous and hence $G_\nu$ is differentiable on the whole real line. One can readily verify that $g_\nu$ is bounded with $K:= \sup_{x \in \R} \abs{g_\nu(x)} = g_\nu(-2)$. So \eqref{eq:CDF-Behaviour1} is just an application of the mean value theorem.\\
	\\
	When $\nu=2$, then
	\[ g_\nu(x) = \frac{1}{2} \, e^{-\frac{x}{2} -1} \,  \ind{x>-2}(x).  \]
	In this case $G_\nu$ is not differentiable in $x=-2$. However, because of the monotonicity, $g_\nu$ is bounded by $K:= \sup_{x \in \R} \abs{g_\nu(x)} = \lim_{x \downarrow -2} g_\nu(x)= 1/2$. Therefore \eqref{eq:CDF-Behaviour1} holds for all $a,b \in (-\infty, -2)$ and all $a,b \in (-2, \infty)$. Using the continuity of $G_\nu$, we can easily show that \eqref{eq:CDF-Behaviour1} extends to the whole real line.
	\\
	When $\nu=1$, the PDF has the form
	\[ g_\nu(x) = \frac{1}{\sqrt{2 \pi}} \, \frac{e^{-\frac{x}{2}- \frac{1}{2}}}{\sqrt{x+1}} \, \ind{x>-1}(x). \]
	First note that $g_\nu$ is not bounded, in fact $\lim_{x \downarrow -1} g_\nu(x) = \infty$. Without loss of generality, assume that $b>a$. We split the proof into three cases:\\
	\\
	\underline{Case 1 ($a < b \leq -1$)}: Here \eqref{eq:CDF-Behaviour2} holds, since $G_\nu(a)=G_\nu(b)=0$.\\
	\\
	\underline{Case 2 ($-1 < a < b $)}: Define $C := \frac{1}{\sqrt{2 \pi}}$. Then we have
	\[ g_\nu(x) \leq C \times \frac{1}{\sqrt{x+1}} \quad \forall x > -1. \]
	We compute (note that $G_\nu$ is increasing):
	\[
	G_\nu(b) - G_\nu(a) = \int_{a}^{b} g_\nu(t) \, dt \leq C \, \int_{a}^{b} \frac{dt}{\sqrt{t+1}} = 2 \, C \, (\sqrt{b+1} - \sqrt{a+1}) \leq 2 \, C \, \sqrt{b-a}.
	\]
	\underline{Case 3 ($a \leq -1 < b $)}: Using the continuity of $G_\nu$ we get:
	\begin{align*}
	G_\nu(b) - G_\nu(a) & = G_\nu(b) - G_\nu(-1)  = \lim_{\epsilon \downarrow -1}  G_\nu(b) - G_\nu(-1 + \epsilon) \\
	& \hspace{-.7em} \stackrel{\text{Case 2}}{\leq} \lim_{\epsilon \downarrow -1} 2 \, C \, \sqrt{b+1- \epsilon} = 2 \, C \, \sqrt{b+1} \leq 2 \, C \, \sqrt{b-a}.\\
	\end{align*}
\end{proof}

\begin{Rem}
	Now let $a=-1$ and $b \in (-1,0)$. With similar arguments as above, this time using the upper bound $g_\nu(t) \geq \frac{e^{-1/2}}{\sqrt{2 \pi}} \times \frac{1}{\sqrt{t+1}}$, we can show that
	\[ G_\nu(b) - G_\nu(-1) \geq \frac{\sqrt{2} \, e^{-1/2}}{\sqrt{\pi}} \times \sqrt{b - (-1)}.  \]
	Thus in a vicinity of $-1$, estimate \eqref{eq:CDF-Behaviour2} is actually the best we can do when $\nu=1$.
\end{Rem}

Now we have all the ingredients to show the following theorem.
\begin{Thm}
	Let $\nu>0$ be an integer and let $F$ be a random variable admitting a finite chaos expansion, such that $\E[F^2]=2\nu$. Let $G(\nu) \sim CenteredGamma(\nu)$. Then
	\[
	 d_{Kol}(F,G(\nu)) \leq \begin{cases}
		  C \times \Var \big(\Gamma_1(F) -2 F \big)^{\frac{1}{4}}, & \text{if } \nu \geq 2 \\
		  C \times \Var \big(\Gamma_1(F) -2 F \big)^{\frac{1}{6}}, & \text{if } \nu =1,
	 \end{cases}
	 \]
	 where $C>0$ is a constant only depending on $\nu$.
\end{Thm}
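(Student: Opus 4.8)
The plan is to combine the classical Berry--Esseen Lemma~\ref{lem:B-E} with the characteristic-function estimate of Lemma~\ref{lem:CharacteristicFuctionDifferenceBound} and the regularity of the target distribution function from Lemma~\ref{lem:CDF-Behaviour}, and then to optimise over the free truncation parameter $T>0$. Writing $V := \Var(\Gamma_1(F) - 2F)$ for brevity, Lemma~\ref{lem:CharacteristicFuctionDifferenceBound} gives $\abs{D(t)/t} \le \tfrac12 \sqrt{V}$ for every $t \neq 0$, so the first (oscillatory) term in the Esseen bound is controlled by
\[ b \int_{-T}^{T} \abs[\Big]{\frac{\phi_F(t) - \phi_{G(\nu)}(t)}{t}} \, dt \le b\, T \sqrt{V}. \]
This linear-in-$T$ contribution is common to both regimes; what distinguishes the cases $\nu \ge 2$ and $\nu = 1$ is only the behaviour of the smoothing term, which in turn reflects the regularity of the CDF $G_\nu$.

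For $\nu \ge 2$ the density $g_\nu$ is bounded by some $K$ (Lemma~\ref{lem:CDF-Behaviour}), so I would invoke the sharper form \eqref{eq:B-E-bounded}, whose second term equals $c(b) K / T$. This yields $d_{\mathrm{Kol}}(F, G(\nu)) \le b\, T \sqrt{V} + c(b) K / T$, and balancing the two summands by taking $T \asymp V^{-1/4}$ makes both of order $V^{1/4}$, giving the asserted bound $d_{\mathrm{Kol}}(F, G(\nu)) \le C\, V^{1/4}$.

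For $\nu = 1$ the density blows up at the left endpoint $-1$ and only the H\"older-$\tfrac12$ estimate \eqref{eq:CDF-Behaviour2} is available, so I must retain the general form \eqref{eq:EsseenLemma}. Using $\abs{G_\nu(x+y) - G_\nu(x)} \le K \abs{y}^{1/2}$ and integrating over $\abs{y} \le c(b)/T$ produces a smoothing term of order $T \cdot (1/T)^{3/2} = T^{-1/2}$, so that $d_{\mathrm{Kol}}(F, G(1)) \le b\, T \sqrt{V} + C' T^{-1/2}$. Here the balance point is $T \asymp V^{-1/3}$, which renders both terms of order $V^{1/6}$ and delivers $d_{\mathrm{Kol}}(F, G(1)) \le C\, V^{1/6}$.

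The arithmetic is routine once the three lemmas are assembled; the only genuinely delicate point is the case $\nu = 1$, where the unboundedness of $g_1$ near $-1$ forces the use of the weaker H\"older continuity and thereby degrades the exponent from $\tfrac14$ to $\tfrac16$. I would finally verify that $b > 1/(2\pi)$ and the associated constant $c(b)$ can be fixed once and for all, independently of $F$, so that the resulting constant $C$ depends only on $\nu$ (through $K$ and through the Lipschitz/H\"older modulus of $G_\nu$).
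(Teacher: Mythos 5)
Your proposal is correct and follows essentially the same route as the paper's own proof: the Esseen inequality of Lemma~\ref{lem:B-E}, the characteristic-function bound of Lemma~\ref{lem:CharacteristicFuctionDifferenceBound}, the regularity of $G_\nu$ from Lemma~\ref{lem:CDF-Behaviour}, and optimisation in $T$ with the balance points $T \asymp V^{-1/4}$ (for $\nu \ge 2$) and $T \asymp V^{-1/3}$ (for $\nu = 1$), yielding the exponents $\tfrac14$ and $\tfrac16$ exactly as in the paper. The only cosmetic difference is that for $\nu \ge 2$ you invoke the simplified bound \eqref{eq:B-E-bounded} while the paper feeds the Lipschitz estimate \eqref{eq:CDF-Behaviour1} into the general form \eqref{eq:EsseenLemma}; both produce the same $O(1/T)$ smoothing term, so this is an inessential variant.
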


\begin{proof}
	If $\nu \geq 2$ then putting the bounds from Lemma \ref{lem:CharacteristicFuctionDifferenceBound} and Lemma \ref{lem:CDF-Behaviour} into the Berry-Esseen lemma \eqref{eq:EsseenLemma}, we get for every $T>0$
	\begin{align*}
	d_{Kol}(F,G(\nu)) & \leq  b \int_{-T}^{T} \abs*{ \frac{\phi_F (t) - \phi_{G(\nu)}(t)}{t} } \,dt + b \, T \, \sup_{x \in \R} \int_{ \abs{ y } \le \frac{c(b)}{T}} \abs*{ G_{\nu}(x+y) - G_{\nu}(x) } \, dy \\
	& \leq b \, T \, \sqrt{ \Var \big( \Gamma_1(F) - 2F \big) } + b \,  K \, \frac{c(b)^2}{T} \\
	& =: c_1 \, T \sqrt{ \Var \big( \Gamma_1(F) - 2F \big) } + \frac{c_2}{T}.
	\end{align*}
	The minimum is achieved at
	\[ T_{\min}= \left( \frac{c_2}{c_1} \right)^{1/2} \Var\big( \Gamma_1(F) - 2F \big)^{-1/4}, \]
	and is given by
	\[ 2\, \sqrt{c_1 c_2} \, \Var\big( \Gamma_1(F) - 2F \big)^{1/4}.  \]
	If $\nu = 1$, then instead we get
	\begin{align*}
	d_{Kol}(F,G(\nu)) & \leq  b \int_{-T}^{T} \abs*{ \frac{\phi_F (t) - \phi_{G(\nu)}(t)}{t} } \,dt + b \, T \, \sup_{x \in \R} \int_{ \abs{ y } \le \frac{c(b)}{T}} \abs*{ G_{\nu}(x+y) - G_{\nu}(x) } \, dy \\
	& \leq b \, T \, \sqrt{ \Var \big( \Gamma_1(F) - 2F \big) } + \frac{4}{3} b \,  K \, \frac{c(b)^{3/2}}{T^{1/2}} \\
	& =: \tilde{c}_1 \, T \sqrt{ \Var \big( \Gamma_1(F) - 2F \big) } + \frac{\tilde{c}_2}{T^{1/2}}.
	\end{align*}
	Again, minimizing over $T>0$ yields
	\[ T_{\min}= 2^{-2/3} \, \left(\frac{\tilde{c}_2}{\tilde{c}_1}\right)^{2/3} \, \Var(\Gamma_1(F) - 2F)^{-1/3}  \]
	and thus the minimum is
	\[ 3 \times 2^{-2/3} \times \tilde{c}_1^{1/3} \tilde{c}_2^{2/3}  \Var( \Gamma_1(F) - 2F)^{1/6}.  \]
\end{proof}

\begin{Rem}
	Most parts of this result are not new, we merely present an original proof to illustrate the power of other techniques that are mostly not relying on Stein's method. In fact, using Theorem 1.7 from \cite{d-p}, as well as the fact that
	\[ d_{Kol}(F,G) \leq C \, \sqrt{ d_{1}(F,G) }, \]
	whenever the density of $G$ is bounded, we immediately retrieve the case $\nu \geq 2$. To our best knowledge, when $\nu<2$, our result is new, as in this case the corresponding density $g_1$ is not  bounded.
\end{Rem}
Since we were mainly interested in $F$ belonging to the second Wiener chaos, we have only focussed on integer valued $\nu$. However, the proofs can easily be adapted to cover any $\nu>0$, which leads to the following generalization.
\begin{Thm}
	Let $\nu>0$ be any positive real number and let $F$ be a random variable admitting a finite chaos expansion, such that $\E[F^2]=2\nu$. Let $G(\nu) \sim CenteredGamma(\nu)$. Then
	\[
	d_{Kol}(F,G(\nu)) \leq \begin{cases}
	C \times \Var \big(\Gamma_1(F) -2 F \big)^{\frac{1}{4}}, & \text{if } \nu \geq 2 \\
	C \times \Var \big(\Gamma_1(F) -2 F \big)^{\frac{\nu}{2(\nu + 2)}}, & \text{if } \nu  \in (0,2),
	\end{cases}
	\]
	where $C>0$ is a constant only depending on $\nu$.

\end{Thm}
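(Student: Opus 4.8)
The plan is to follow the proof of the integer-$\nu$ theorem line by line, keeping its two analytic inputs and upgrading only the regularity of the target distribution function. The first observation is that neither Lemma~\ref{lem:CharacteristicFuctionDifferenceBound} nor its companion Lemma~\ref{Lem:NuT-Estimate} uses integrality of $\nu$: the differential equation \eqref{eq:GammaODE} for $D(t)=\phi_F(t)-\phi_{G(\nu)}(t)$, its integrating-factor solution, and the elementary estimate $\sqrt{4t^2+1}-(4t^2+1)^{-\nu/4}\le(2+\nu)\,t$ are all valid for arbitrary $\nu>0$. Consequently the characteristic-function bound $|D(t)|\le\tfrac12|t|\sqrt{\Var(\Gamma_1(F)-2F)}$ holds for every $\nu>0$ without any change.

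The genuinely new ingredient is the modulus of continuity of the centered Gamma CDF $G_\nu$ in the range $\nu\in(0,2)$, where the density
\[
g_\nu(x)=2^{-\nu/2}\Gamma(\nu/2)^{-1}(x+\nu)^{\nu/2-1}e^{-x/2-\nu/2}\ind{x>-\nu}(x)
\]
has an integrable but unbounded singularity of order $(x+\nu)^{\nu/2-1}$ at the left endpoint $x=-\nu$, so that $G_\nu$ is no longer Lipschitz. I would generalize Lemma~\ref{lem:CDF-Behaviour} by showing that $G_\nu$ is instead Hölder continuous of order $\nu/2$. Since $e^{-x/2-\nu/2}\le1$ on $[-\nu,\infty)$, one has $g_\nu(x)\le C_\nu(x+\nu)^{\nu/2-1}$ with $C_\nu=2^{-\nu/2}\Gamma(\nu/2)^{-1}$, and splitting into the same three cases as in the $\nu=1$ argument and integrating yields, for $-\nu<a<b$,
\[
G_\nu(b)-G_\nu(a)\le C_\nu\int_a^b(t+\nu)^{\nu/2-1}\,dt=\tfrac{2C_\nu}{\nu}\big[(b+\nu)^{\nu/2}-(a+\nu)^{\nu/2}\big].
\]
The decisive step is the subadditivity inequality $u^{\nu/2}-v^{\nu/2}\le(u-v)^{\nu/2}$ for $u\ge v\ge0$, which is exactly the concavity of $t\mapsto t^{\nu/2}$ available because $\nu/2\in(0,1)$; this gives $G_\nu(b)-G_\nu(a)\le\tfrac{2C_\nu}{\nu}(b-a)^{\nu/2}$, and the boundary case $a\le-\nu<b$ follows from $G_\nu(-\nu)=0$ together with $b+\nu\le b-a$. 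For $\nu\ge2$ the density is bounded (continuous when $\nu>2$, monotone when $\nu=2$), so $G_\nu$ is Lipschitz and the integer argument applies verbatim.

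Finally I would insert both estimates into the Berry--Esseen Lemma~\ref{lem:B-E}. Writing $\alpha:=\nu/2$ when $\nu\in(0,2)$ and $\alpha:=1$ when $\nu\ge2$, the Hölder bound gives
\[
\sup_{x}\int_{|y|\le c(b)/T}\big|G_\nu(x+y)-G_\nu(x)\big|\,dy\;\le\;K\int_{|y|\le c(b)/T}|y|^{\alpha}\,dy\;\approx\;K\,(c(b)/T)^{\alpha+1},
\]
so that \eqref{eq:EsseenLemma}, combined with $\int_{-T}^{T}|D(t)/t|\,dt\le T\sqrt{\Var(\Gamma_1(F)-2F)}$, becomes $d_{Kol}(F,G(\nu))\le c_1T\sqrt{\Var(\Gamma_1(F)-2F)}+c_2T^{-\alpha}$. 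Balancing the two terms (the optimum sits at $T^{\alpha+1}\sim\Var(\Gamma_1(F)-2F)^{-1/2}$) produces the rate $\Var(\Gamma_1(F)-2F)^{\alpha/(2(\alpha+1))}$; substituting $\alpha=\nu/2$ gives the exponent $\frac{\nu/2}{2(\nu/2+1)}=\frac{\nu}{2(\nu+2)}$, and $\alpha=1$ recovers $\frac14$. The only new work is the Hölder estimate of the second paragraph, and the main obstacle there is pinning down the sharp exponent $\nu/2$ while taming the endpoint singularity of $g_\nu$, which the subadditivity of $t\mapsto t^{\nu/2}$ handles cleanly.
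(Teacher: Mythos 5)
Your proposal is correct and is precisely the adaptation the paper has in mind when it states that the integer-$\nu$ proof ``can easily be adapted'': the characteristic-function bound of Lemma \ref{lem:CharacteristicFuctionDifferenceBound} and the elementary estimate of Lemma \ref{Lem:NuT-Estimate} indeed never use integrality of $\nu$, so the only new ingredient is the H\"older continuity of $G_\nu$ with exponent $\nu/2$ for $\nu\in(0,2)$, which your subadditivity argument $u^{\nu/2}-v^{\nu/2}\le (u-v)^{\nu/2}$ establishes cleanly and which generalizes the paper's $\nu=1$ case (exponent $1/2$). Feeding this into the Berry--Esseen Lemma \ref{lem:B-E} and optimizing over $T$ exactly as in the paper yields the claimed exponent $\frac{\nu}{2(\nu+2)}$ (recovering $\frac16$ at $\nu=1$ and $\frac14$ for $\nu\ge2$), so the argument is complete.
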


Under the light of the result presented in Section \ref{sec:concrete-example}, we end the paper with the following conjecture.
\begin{con}\label{con:<>}
Let $\nu>0$, and $F=I_2 (f)$ belonging to the second Wiener chaos so that $\E[F^2]=2\nu$. Let $G(\nu) \sim CenteredGamma(\nu)$. Then there exist two general constants $0 < C_1< C_2$ such that 
\begin{equation}\label{eq:<>}
C_1 \mathbf{M}(F) \le d_{TV} (F,G(\nu)) \le C_2 \mathbf{M}(F).
\end{equation} 
\end{con}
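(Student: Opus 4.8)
The plan is to establish the two inequalities in \eqref{eq:<>} by separate arguments: the lower bound $C_1\mathbf{M}(F)\le d_{TV}(F,G(\nu))$ is accessible by an analytic, cumulant-based argument, whereas the upper bound is the genuinely difficult half and is the reason the statement remains a conjecture.

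For the lower bound I would work entirely on the level of characteristic functions, exploiting that total variation controls them \emph{globally} on the real line: since $\abs{e^{itx}}=1$, one has $\abs{\phi_F(t)-\phi_{G(\nu)}(t)}\le 2\,d_{TV}(F,G(\nu))$ for every $t\in\R$, with no need to pass through $d_2$ as in Theorem~\ref{thm:optimal}. Because all elements of the second Wiener chaos have exponential moments and the cumulant bound \eqref{eq:lower-2} holds, both $\log\phi_F$ and $\log\phi_{G(\nu)}$ are analytic near the origin, with $\log\phi_F(t)-\log\phi_{G(\nu)}(t)=\sum_{r\ge 3}\frac{\kappa_r(F)-\kappa_r(G(\nu))}{r!}(it)^r$, the terms $r=1,2$ vanishing since means and variances coincide. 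Evaluating this expansion at a fixed small real argument $t=\delta$ (chosen depending only on $\nu$, so that $\phi_F,\phi_{G(\nu)}$ stay bounded away from $0$) and separating real and imaginary parts isolates $\kappa_4(F)-\kappa_4(G(\nu))$ at order $\delta^4$ and $\kappa_3(F)-\kappa_3(G(\nu))$ at order $\delta^3$. The decisive point is that the higher-order tail $\sum_{r\ge 5}$ is controlled by $\mathbf{M}(F)$ via Lemma~\ref{lem:cumulant-difference}, so that for $\delta$ small it can be absorbed; one then reads off $\abs{\kappa_3(F)-\kappa_3(G(\nu))}\le_C d_{TV}(F,G(\nu))$ and $\abs{\kappa_4(F)-\kappa_4(G(\nu))}\le_C d_{TV}(F,G(\nu))$, which is exactly $C_1\mathbf{M}(F)\le d_{TV}(F,G(\nu))$.

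For the upper bound $d_{TV}(F,G(\nu))\le C_2\mathbf{M}(F)$ the natural route is to rerun the iterated Stein--Malliavin scheme of Theorem~\ref{thm:MainMalliavinSteinBound}, now with indicator test functions $h=\mathds{1}_A$. Solving the Gamma Stein equation $\mathds{1}_A(x)-P(G(\nu)\in A)=2(x+\nu)\phi'(x)-x\phi(x)$ from \cite{d-p} and applying the Malliavin integration-by-parts \eqref{eq:IntegrationByParts} reduces $d_{TV}(F,G(\nu))$ to $\sup_A\abs{\E[\phi'(F)(2F-\CenteredGamma_1(F))]}$; iterating exactly as in Lemma~\ref{lem:AuxiliarlyLemmaForMainTheorem} would reproduce the same linear combination of iterated-Gamma variances $\Var(\Gamma_1(F)-2F)$, $\Var(\Gamma_2(F)-2\Gamma_1(F))$ and $\Var(\Gamma_3(F)-2\Gamma_2(F))$. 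On the second Wiener chaos these are all controlled by $\mathbf{M}(F)$ through Lemma~\ref{lem:r+1<r} and Corollary~\ref{cor:variance123-1}, so the conjectured rate would follow \emph{provided} the Stein solutions were regular enough to iterate.

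The hard part --- and precisely the obstruction to completing the proof --- lies in this last proviso. For an indicator test function the Stein solution $\phi$ is only Lipschitz: $\phi$ and $\phi'$ are bounded, but $\phi'$ is \emph{not} Lipschitz, so the second application of the Stein equation, which is what removes the square root in the $d_2$ bound of Theorem~\ref{thm:MainMalliavinSteinBound}, is not licensed. The situation degrades further when $\nu<2$: the Stein operator $2(x+\nu)\phi'-x\phi$ degenerates at the left endpoint $x=-\nu$ of the support, and since the target density is unbounded there for $\nu=1$ (Lemma~\ref{lem:CDF-Behaviour}), the derivatives of $\phi$ blow up near the boundary and no uniform control is available. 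A naive smoothing of $\mathds{1}_A$ recovers only the rate $\sqrt{\mathbf{M}(F)}$, in line with the total-variation estimate obtained in \cite{rola} for $M=\nu\ge 5$; obtaining the \emph{linear} rate $\mathbf{M}(F)$ therefore seems to require either a boundary-sensitive analysis of the Stein solution that exploits the explicit second-chaos structure $F=\sum_i c_i(N_i^2-1)$, or a direct comparison of densities in the spirit of Proposition~\ref{Prop:concrete-example}, where the conjectured two-sided bound is verified by hand.
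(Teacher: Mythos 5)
The statement you were asked to prove is Conjecture~\ref{con:<>}: the paper itself offers \emph{no proof} of it. The authors state \eqref{eq:<>} as an open problem, supported only by the explicit two-eigenvalue computation of Proposition~\ref{Prop:concrete-example} (where both sides of \eqref{eq:<>} are shown to behave like $1/n^2$) and by the Zintout-based estimate \eqref{eq:TV-rate} from \cite{rola}, which for $\nu=M\ge 5$ gives only the square-root rate $\sqrt{\mathbf{M}(F_n)}$. So there is no proof in the paper to compare against, and your own diagnosis agrees with the paper's implicit one: you sketch only the lower bound and concede the upper bound. Your account of the obstruction --- with $h=\mathds{1}_A$ the Stein solution $\phi$ has $\phi'$ bounded but not Lipschitz, so the second application of Lemma~\ref{lem:AuxiliarlyLemmaForMainTheorem}, which is what removes the square root in the $d_2$ bound, is not licensed, and matters degenerate further at the boundary $x=-\nu$ when $\nu<2$ (cf.\ Lemma~\ref{lem:CDF-Behaviour}) --- is accurate and explains precisely why the authors left \eqref{eq:<>} as a conjecture. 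As a proof attempt, however, your proposal has a genuine gap by construction: the upper half of \eqref{eq:<>} is simply not established.

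A further caveat concerns your lower-bound sketch, which is close in spirit to the lower bound of Theorem~\ref{thm:optimal} but is not yet complete. The inequality $\abs{\phi_F(t)-\phi_{G(\nu)}(t)}\le 2\,d_{TV}(F,G(\nu))$ holds \emph{only for real} $t$: for $z=t+iy$ with $y\neq 0$ the integrand $e^{itx-yx}$ is unbounded, so total variation gives no control off the real axis, and Cauchy's estimates on a complex disk (the mechanism used in Theorem~\ref{thm:optimal}) are not directly available. You correctly propose to work instead at a single small real argument $t=\delta$ and absorb the tail $\sum_{r\ge 5}$, but this absorption is exactly where the sketch is incomplete: Lemma~\ref{lem:cumulant-difference} yields, for each $r$, a constant depending on $r$ and $\nu$ in an unquantified way, so the convergence and smallness of $\sum_{r\ge 5}C_{r,\nu}\,\mathbf{M}(F)\,\delta^r/r!$ is not justified as stated. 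One would need to trace the induction in Lemma~\ref{lem:cumulant-difference} (using the geometric factor $(4\nu)^r$ from Lemma~\ref{lem:r+1<r} and the identity $\Var(\Gamma_1(F)-2F)=\tfrac16(\kappa_4(F)-\kappa_4(G(\nu)))-2(\kappa_3(F)-\kappa_3(G(\nu)))\le C\,\mathbf{M}(F)$) to show $C_{r,\nu}\le A\,B^r\,r!$ for constants $A,B$ depending only on $\nu$; falling back on the crude a priori bound \eqref{eq:lower-2} instead makes the tail $O(\delta^5)$ rather than $O(\mathbf{M}(F)\,\delta^5)$, which ruins the argument whenever $\mathbf{M}(F)\ll\delta$. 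So even the ``easy'' half needs repair before it is a proof, while the hard half remains open --- consistent with the status of the statement in the paper.
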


\section{Appendix}

The following lemma provides an explicit representation of the new Gamma operators used in this paper in terms of contractions. Recall that these are not the same as e.g. in \cite{CumOnTheWienerSpace}, but rather the new ones introduced in \eqref{eq:GammOperatorDefinition}.

\begin{lem} \label{lem:RepresentationOfGamma_j}
	For $q \geq 1$, let $F=I_q(f)$, for some $f \in \HH^{\odot q}$ be an element of the $q$-th Wiener chaos. Then
	\begin{align}
	\Gamma_{s}(F)  = & \sum_{r_1=1}^{q} \cdots \sum_{r_{s}=1}^{[sq - 2 r_1 - \cdots - 2 r_{s-1}] \wedge q} c_q (r_1, \ldots, r_{s}) \ind{r_1<q} \ldots \ind{r_1 + \cdots + r_{s-1} < \frac{sq}{2}} \notag \\
	& \times I_{(s+1)q - 2 r_1 - \cdots - 2 r_{s} } \left( \left( \left( \ldots (f \scont{r_1} f ) \scont{r_2} f  \right) \ldots f  \right) \scont{r_{s}} f \right). \label{eq:NewGammaOpProof1}
	\end{align}
\end{lem}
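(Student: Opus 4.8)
The plan is to prove \eqref{eq:NewGammaOpProof1} by induction on $s$, using two standard facts about chaotic random variables. First, since $L^{-1}I_q(f)=-\tfrac{1}{q}I_q(f)$, for $F=I_q(f)$ one has $-DL^{-1}F=\tfrac{1}{q}DF$, so the defining recursion \eqref{eq:GammOperatorDefinition} collapses to $\Gamma_{s+1}(F)=\tfrac{1}{q}\sprod{D\Gamma_s(F),DF}_{\HH}$. Second, the Malliavin derivative of a multiple integral is $DI_m(h)=m\,I_{m-1}(h)$ (reading $h\in\HH^{\odot m}$ as carrying one free coordinate), and the $\HH$-inner product of two such derivatives is expanded by the multiplication formula for multiple Wiener--It\^o integrals (see \cite{n-pe-1}): pairing $I_{p-1}(h)$ against $I_{m-1}(h')$ in $\HH$ produces a sum $\sum_{r\ge1}(\text{const})\,I_{p+m-2r}\big(h\scont{r}h'\big)$, where the inner product contracts the free coordinate and thereby shifts every contraction index up by one, forcing $r\ge 1$.

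\emph{Base case ($s=1$).} I would compute $\Gamma_1(F)=\tfrac{1}{q}\sprod{DF,DF}_{\HH}=q\,\sprod{I_{q-1}(f),I_{q-1}(f)}_{\HH}$ and apply the inner-product expansion above. The multiplication-formula contraction over the $q-1$ non-free slots, combined with the contraction of the free coordinate in the $\HH$-pairing, yields exactly the kernels $f\scont{r_1}f$ with $r_1\in\{1,\dots,q\}$, giving $\Gamma_1(F)=\sum_{r_1=1}^{q}c_q(r_1)\,I_{2q-2r_1}(f\scont{r_1}f)$ with $c_q(r_1)=q\,(r_1-1)!\binom{q-1}{r_1-1}^2$. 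This matches \eqref{eq:NewGammaOpProof1} for $s=1$, where the indicator product is empty and the term $r_1=q$ contributes the deterministic mean $I_0(\cdot)$.

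\emph{Inductive step.} Assuming the formula for $\Gamma_s(F)$, each summand is a multiple integral $I_m\big((\cdots(f\scont{r_1}f)\cdots\scont{r_s}f)\big)$ of order $m=(s+1)q-2(r_1+\cdots+r_s)$. Differentiating gives $m\,I_{m-1}(\cdot)$, and then $\Gamma_{s+1}(F)=\tfrac{1}{q}\sprod{D\Gamma_s(F),DF}_{\HH}$ pairs each term against $DF=q\,I_{q-1}(f)$. A further application of the multiplication/inner-product formula introduces a new contraction $\scont{r_{s+1}}f$ and a new index $r_{s+1}$ running from $1$ up to $m\wedge q=[(s+1)q-2(r_1+\cdots+r_s)]\wedge q$, produces the integral of order $(s+2)q-2(r_1+\cdots+r_{s+1})$, and builds the nested kernel $(\cdots\scont{r_s}f)\scont{r_{s+1}}f$; reading the resulting coefficient as the product of $c_q(r_1,\dots,r_s)$ with the factor $m$ from $D$ and the combinatorial factor from the multiplication formula defines $c_q(r_1,\dots,r_{s+1})$. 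Re-indexing gives precisely \eqref{eq:NewGammaOpProof1} at level $s+1$, the truncation $r_{s+1}\le m\wedge q$ matching the stated upper summation limit.

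\emph{Main obstacle.} The only genuinely delicate point is the bookkeeping of the indicator functions. The kernel produced after $k-1$ contractions has order $kq-2(r_1+\cdots+r_{k-1})$; when this order drops to $0$ the term is the constant $I_0(\cdot)$, whose Malliavin derivative vanishes and which therefore cannot propagate to the next Gamma operator. Thus at each stage one must discard exactly the multi-indices for which an intermediate order is non-positive before the next differentiation, which is the condition $r_1+\cdots+r_{k-1}<kq/2$ encoded by $\ind{r_1+\cdots+r_{k-1}<kq/2}$; in particular the order-zero terms present in the level-$s$ formula are precisely those removed by the newly appearing indicator $\ind{r_1+\cdots+r_s<(s+1)q/2}$ at level $s+1$. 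Carrying these indicators, the constants $c_q$, and the $\,\cdot\wedge q$ truncation (arising from the finite number of slots available in $f$) correctly through the induction is the main effort; everything else reduces to repeated application of the two tools above.
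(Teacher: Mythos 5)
Your proposal is correct and follows essentially the same route as the paper's proof: induction on $s$, with the base case handled by the product formula applied to $\frac{1}{q}\norm{DF}_{\HH}^2$, the inductive step pairing $D\Gamma_{s}(F)$ against $-DL^{-1}F = \tfrac{1}{q}DF$ via the multiplication formula, and the indicators $\ind{r_1+\cdots+r_{k-1}<kq/2}$ arising exactly as you say, because differentiation kills the order-zero terms. The only cosmetic difference is that the paper realizes $\HH = L^2(\mathbb{A},\mathscr{A},\mu)$ and writes the pairing as an integral over the free coordinate $a$, whereas you phrase it abstractly; the combinatorics and the recursive constants $c_q(r_1,\ldots,r_s)$ come out identically.
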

\begin{proof}
	Without loss of generality, we assume that $\mathfrak{H} = L^2(\mathbb{A}, \mathscr{A}, \mu)$, where $(\mathbb{A},\mathscr{A})$ is a measurable space and $\mu$ a $\sigma$-finite measure without atoms. \\
	Note that for $s=1$, the product $\ind{r_1<q} \ldots \ind{r_1 + \cdots + r_{s-1} < \frac{sq}{2}}$ is empty, i.e. $1$. In this case \eqref{eq:NewGammaOpProof1} reads:
	\[ \Gamma_1(F) =  \sum_{r=1}^{q} c_q(r) I_{2q-2r} ( f \scont{r} f).  \]
	We show this using the product formula:
	\begin{align*}
	\Gamma_1(F)  & = \sprod{DF, -DL^{-1} F}_\mathfrak{H} = \frac{1}{q} \norm{DF}_\mathfrak{H}^2 =  q \int_{\mathbb{A}} I_{q-1} \left( f(\cdot,a) \right)^2 \, \mu (da) \\
	& = q \sum_{r=0}^{q-1} r! \binom{q-1}{r}^2 I_{2q-2r-2} \left( \int_{\mathbb{A}} f(\cdot, a) \scont{r} f(\cdot,a) \, \mu (da) \right) \\
	& = q \sum_{r=0}^{q-1} r! \binom{q-1}{r}^2 I_{2q-2r-2} ( f \scont{r+1} f ) \\
	& = q \sum_{r=1}^{q} (r-1)! \binom{q-1}{r-1}^2 I_{2q-2r} (f \scont{r} f).
	\end{align*}
	We now show the induction step $s-1 \to s$. Note that we have
	\[   -DL^{-1}(F)(a)  = -DL^{-1}(I_q(f))(a) = -D \left( - \frac{1}{q} I_q(f) \right)(a) = I_{q-1}(f(\cdot , a)) \]
	and
	\[ D I_p((f)) (a) = \ind{p>0} \, p \, I_{p-1}(f(\cdot, a)). \]
	Therefore
	\begin{align*}
	D \Gamma_{s-1}(F) (a) = & \sum_{r_1=1}^{q} \cdots \sum_{r_{s-1}=1}^{[(s-1)q - 2 r_1 - \cdots - 2 r_{s-2}] \wedge q} c_q (r_1, \ldots, r_{s-1}) \ind{r_1<q} \ldots \ind{r_1 + \cdots + r_{s-2} < \frac{(s-1)q}{2}} \notag \\
	& \times \ind{r_1 + \cdots + r_{s-1} < \frac{sq}{2}} \, (sq - 2 r_1 - \cdots - 2 r_{s-1}) \\
	& \times I_{sq - 2 r_1 - \cdots - 2 r_{s-1} -1 } \left( \left( \left[ \left[ \ldots [f \scont{r_1} f ] \scont{r_2} f  \right] \ldots f  \right] \scont{r_{s-1}} f \right) (\cdot, a) \right),
	\end{align*}
	and thus
	\begin{align*}
	& \Gamma_s(F) =  \sprod{D \Gamma_{s-1}(F), -DL^{-1} F}_\mathfrak{H} \\ 
	{}={} &  \sum_{r_1=1}^{q} \cdots \sum_{r_{s-1}=1}^{[(s-1)q - 2 r_1 - \cdots - 2 r_{s-2}] \wedge q}  \, c_q (r_1, \ldots, r_{s-1}) \ind{r_1<q} \ldots \ind{r_1 + \cdots + r_{s-2} < \frac{(s-1)q}{2}} \notag \\
	& \times \ind{r_1 + \cdots + r_{s-1} < \frac{sq}{2}} \, (sq - 2 r_1 - \cdots - 2 r_{s-1})  \\
	& \times \int_{\mathbb{A}} I_{q-1}(f(\cdot, a)) \, I_{sq - 2 r_1 - \cdots - 2 r_{s-1} -1 } \left( \left( \left[ \left[ \ldots [f \scont{r_1} f ] \scont{r_2} f  \right] \ldots f  \right] \scont{r_{s-1}} f \right) (\cdot, a) \right) \, \mu (da) \\
	{}={} &  \sum_{r_1=1}^{q} \cdots \sum_{r_{s-1}=1}^{[(s-1)q - 2 r_1 - \cdots - 2 r_{s-2}] \wedge q} \, c_q (r_1, \ldots, r_{s-1}) \ind{r_1<q} \ldots \ind{r_1 + \cdots + r_{s-2} < \frac{(s-1)q}{2}} \notag \\
	& \times \ind{r_1 + \cdots + r_{s-1} < \frac{sq}{2}} \, (sq - 2 r_1 - \cdots - 2 r_{s-1})  \\
	& \times \sum_{r_s=1}^{[sq-2r_1- \cdots - 2r_{s-1} ] \wedge q} (r_s-1)! \, \binom{sq-2r_1- \cdots -  2r_{s-1} -1 }{r_s - 1} \binom{q-1}{r_s - 1} \\
	& \hspace{-.9em} I_{(s+1)q-2r_1 - \cdots - 2r_{s-1} - 2 r_{s}} \left( \int_{\mathbb{A}} \left( \left[ \left[ \ldots [f \scont{r_1} f ] \scont{r_2} f  \right] \ldots f  \right] \scont{r_{s-1}} f \right) (\cdot, a)  \scont{r_s -1} f(\cdot,a) \, \mu (da) \right) \\
	{}={} &  \sum_{r_1=1}^{q} \cdots \sum_{r_{s-1}=1}^{[(s-1)q - 2 r_1 - \cdots - 2 r_{s-2}] \wedge q} \, c_q (r_1, \ldots, r_{s-1}) \ind{r_1<q} \ldots \ind{r_1 + \cdots + r_{s-2} < \frac{(s-1)q}{2}} \notag \\
	& \times \ind{r_1 + \cdots + r_{s-1} < \frac{sq}{2}} \, (sq - 2 r_1 - \cdots - 2 r_{s-1}) \\
	& \times \sum_{r_s=1}^{[sq-2r_1- \cdots - 2r_{s-1} ] \wedge q} (r_s-1)! \, \binom{sq-2r_1- \cdots -  2r_{s-1} -1 }{r_s - 1} \binom{q-1}{r_s - 1} \\
	& I_{(s+1)q-2r_1 - \cdots - 2r_{s-1} - 2 r_{s}} \left( \left( \left[ \left[ \ldots [f \scont{r_1} f ] \scont{r_2} f  \right] \ldots f  \right] \scont{r_{s-1}} f \right) \scont{r_s} f \right). \\
	\end{align*}
	The constants are recursively defined via
	\begin{flalign*}
	& c_q(r) = q \, (r-1)! \, \binom{q-1}{r-1}^2, &
	\end{flalign*}
	and
	\begin{align} 
	& c_q(r_1, \cdots, r_{s}) =  \notag \\
	& (sq- 2 r_1 - \cdots - 2 r_{s-1}) \, (r_s-1)!  \, \binom{sq-2r_1- \cdots -  2r_{s-1} -1 }{r_s - 1} \binom{q-1}{r_s - 1} \,  c_q(r_1,  \cdots, r_{s-1}). \label{eq:RecursiveFormulaForc_q}
	\end{align}
\end{proof}

With this, we are able to proof Proposition \ref{Prop:RelationOldAndNewGamma}

\begin{proof}[Proof of Proposition \ref{Prop:RelationOldAndNewGamma}]
	Part \textit{(a)} is clear from the definition. Part \textit{(b)} for $j = 1$ is also trivial. For $j = 2$, we use the fact that $\Gamma_1 = \Gamma_{alt,1}$, as well as the integration by parts formula \eqref{eq:IntegrationByParts}, to get
	\begin{align*}
	\E \big[ \Gamma_2(F) \big] & = \E \big[ \sprod{D \Gamma_1(F), -D L^{-1} F}_{\HH} \big] = \E\big[ \Gamma_1(F) \, F \big] \\
	& = \E \big[F \, \Gamma_{alt,1}(F) \big] =  \E \big[ \sprod{D F, -D L^{-1} \Gamma_{alt,1}(F)}_{\HH} \big] = \E[\Gamma_{alt,2}(F)].
	\end{align*}
	For part \textit{(c)}, consider
	\begin{align*}
	\E \big[ \Gamma_3(F) \big] & = \E \big[ \sprod{D \Gamma_2(F), -D L^{-1} F}_{\HH} \big] = \E \big[ F \, \Gamma_2(F) \big] = \E \big[ F \, \sprod{D \Gamma_1(F), -D L^{-1} F}_{\HH} \big] \\
	& = \E \big[ \sprod{D \big(F \,\Gamma_1(F) \big), -D L^{-1} F}_{\HH} \big] - \E \big[ \Gamma_1(F) \sprod{D F, -D L^{-1} F}_{\HH} \big] \\
	& = \E \big[ \sprod{D \big(F \,\Gamma_1(F) \big), -D L^{-1} F}_{\HH} \big] - \E \big[ \Gamma_{alt,1}(F)^2 \big] \\
	& = \E \big[ F^2 \, \Gamma_{alt,1} \big] - \E \big[ \Gamma_{alt,1}(F)^2 \big] \\
	& = \E[F^2] \, \E\big[ \Gamma_{alt,1}(F) \big] + \E \big[ 2F \, \sprod{D F, -D L^{-1}  \Gamma_{alt,1}(F)}_{\HH}  \big] - \E \big[ \Gamma_{alt,1}(F)^2 \big] \\
	& = \E\big[ \Gamma_{alt,1}(F) \big]^2 + 2 \, \E \big[ F \, \Gamma_{2,alt} \big] - \E \big[ \Gamma_{alt,1}(F)^2 \big] \\
	& = - \Var\big(\Gamma_{alt,1}(F)\big) + 2 \, \E\big[ \Gamma_{alt,3}(F) \big].
	\end{align*}
	For part \textit{(d)}, we consider the representation of $\Gamma_{alt,s}$ given in equation (5.25) of \cite{CumOnTheWienerSpace}. The representation is exactly the same as for $\Gamma_s$ (Lemma \ref{lem:RepresentationOfGamma_j}), except for the recursive formula of the constants $c_q$. For $\Gamma_{alt,j}$ they are given by
	\[ c_{alt,q}(r) = c_q(r) = q (r-1)! \binom{q-1}{r-1}^2  \]
	and
	\[ c_{alt,q}(r_1, \ldots, r_s) = q \, (r_s-1)!  \, \binom{sq-2r_1- \cdots -  2r_{s-1} -1 }{r_s - 1} \binom{q-1}{r_s - 1} \,  c_q(r_1,  \cdots, r_{s-1}). \]
	Comparing this with our formula \eqref{eq:RecursiveFormulaForc_q}, we see that only the first factor is different, namely $q$ instead of $(sq-2r_1-\ldots - 2r_{s-1})$. But now for $q=2$, the indicator $\ind{r_1 + \cdots + r_{s-1} < \frac{sq}{2}}$ dictates that $r_1 = \ldots = r_{s-1} = 1$ and hence
	\[ q = 2 = 2s-2r_1 - \ldots - 2 r_{s-1}. \]
	Therefore, the two notions of Gamma operators coincide when $q=2$.
	\end{proof}

\section*{Acknowledgments}
The authors would like to thank Simon Campese for pointing out a mistake in the proof of Theorem \ref{thm:MainMalliavinSteinBound} in an earlier version of this preprint.


\end{document}